\newtheorem{theorem}{Theorem}[section]
\newtheorem{proposition}[theorem]{Proposition}
\newtheorem{lemma}[theorem]{Lemma}
\theoremstyle{definition}
\newtheorem{definition}[theorem]{Definition}
\theoremstyle{remark} 
\numberwithin{equation}{section}
\newcommand{\Z}{{\mathbb{Z}}}
\newcommand{\C}{\mathbb{C}}
\newcommand{\R}{\mathbb{R}}
\newcommand{\pa}{\partial}
\newcommand{\vep}{\varepsilon}
\begin{document}

\title[Gaussian processes with Cauchy covariance]{ Gaussian fields and Gaussian sheets with generalized Cauchy covariance structure}
\author{S.C. Lim$^1$}\email{$^1$sclim@mmu.edu.my}\author{L.P.
Teo$^{2}$}\email{$^2$lpteo@mmu.edu.my}

\keywords{Gaussian field with generalized Cauchy covariance,
Gaussian sheet with generalized Cauchy covariance, self--similarity,
long/short range dependence, Lamperti transform}
\subjclass[2000]{Primary 60G10 60G17} \maketitle

\noindent {\scriptsize \hspace{1cm}$^1$Faculty of Engineering,
Multimedia University, Jalan Multimedia, }

\noindent {\scriptsize \hspace{1.1cm} Cyberjaya, 63100, Selangor
Darul Ehsan, Malaysia.}

\noindent {\scriptsize \hspace{1cm} $^2$Faculty of Information
Technology, Multimedia University, Jalan Multimedia,}

\noindent{\scriptsize \hspace{1.1cm} Cyberjaya, 63100, Selangor
Darul Ehsan, Malaysia.}
\begin{abstract}
Two types of Gaussian processes, namely the Gaussian field with
generalized Cauchy covariance (GFGCC) and the Gaussian sheet with
generalized Cauchy covariance (GSGCC) are considered. Some of the
basic properties and the asymptotic properties of the spectral
densities of these random fields are studied. The associated
self-similar random fields obtained by applying the Lamperti
transformation to GFGCC and GSGCC are studied.

\end{abstract}

\section{Introduction}  Generalization of some well-known stochastic
processes indexed by a single parameter to processes indexed by two
parameters has attracted considerable interest recently. For
example, Houdre and Villa \cite{Houdre_Villa} generalized fractional
Brownian motion parametrized by a single Hurst index to the
bifractional
  Brownian motion characterized by two indices. Another
example is provided by the  multidimensional stationary Gaussian
fields with generalized Cauchy covariance indexed by two parameters
introduced by Gneiting and Schlather \cite{Gneiting_Schlather}.
These processes can be regarded as extension of the Gaussian
processes with Cauchy covariance used in geostatistics. For
simplicity, we call such processes the Gaussian field with
generalized Cauchy covariance (GFGCC). Here we would like to point
out that one should not confuse such a process with a stable process
with Cauchy marginals.

In general, processes parametrized by two indices can provide more
flexibility in their applications in modeling physical phenomena. In
particular, the GFGCC model has an additional nice and useful
property as it allows separate characterization of fractal dimension
and long range dependence (LRD) by two different indices. This is in
contrast to models based on fractional Brownian motion (or
fractional Brownian noise) which use a single index to characterize
these two properties. Models based on a stochastic process or field
parametrized by a single index seem inadequate. A detailed analysis
on network traffic carried by Park et al \cite{Park} shows that
fractional Brownian motion/fractional Gaussian noise model is
inadequate for description of network traffic for all scales since
at very small time scales the traffic fluctuations are no longer
statistically self-similar. The need to replace global scaling by
local scaling is essential for processes such as multifractional
Brownian motion introduced independently by Peltier and Vehel
\cite{Peltier_Vehel} and Benassi, Jaffard and Roux
\cite{Benassi_Jaffard_Roux}.

     The main aim of this paper is to study  GFGCC and its anisotropic counterpart, which we call
     Gaussian sheet with generalized Cauchy covariance (GSGCC). In view of the fact
that   GFGCC is widely used in geostatistics and other applications
\cite{b,Berizzi_Mese_Martorella, e,f,g,h,i,j}, it will be useful to
consider its   properties in more detail. In these existing
applications, usually only the covariance structure of GFGCC are
used, and the sample path properties of GFGCC are rarely mentioned.
However, a better understanding of the sample properties of GFGCC
and GSGCC will render more versatility and flexibility to their
applications. Our approach to this subject is mainly from a physical
viewpoint. Basic sample properties such as the long range dependence
and the local self-similarity properties of GFGCC and GSGCC are
investigated. A simpler method is used to derive the asymptotic
properties of the spectral densities of GFGCC and GSGCC. By
generalizing the Lamperti transformation to $n$-dimensional
processes,   new types of random field and random sheet with global
self-similar property associated with GFGCC and GSGCC are obtained.
Properties of these random field and random sheet are also studied.

\section{ Isotropic Gaussian Field with Generalized Cauchy Covariance}
In this section we consider GFGCC, which is a multidimensional
isotropic Gaussian random field in $n$-dimensional Euclidean space.
We first introduce some notations and state some basic definitions
and properties of GFGCC. Denote by $\mathbb{N}$, $\Z$, $\R$ and
$\R_+$ the sets of positive integers, integers, real numbers and
positive real numbers respectively. Let $t=(t_1,\ldots, t_n)$ and
$s=(s_1,\ldots, s_n)$ be two vectors in $\R^n$, and $\Vert
t\Vert=\sqrt{\sum_{i=1}^n t_i^2}$, $n, j \in\mathbb{N}$ be its
Euclidean norm. By $t\rightarrow 0^+$ and $t\rightarrow \infty$, we
mean $t_i\rightarrow 0^+$ and
 $t_i\rightarrow \infty$ respectively for all $i=1,\ldots, n$.

\begin{definition}    A random field $X_{\alpha,\beta}(t)$ on $\R^n$
 is called a Gaussian field with generalized Cauchy covariance (or GFGCC) if it is a stationary Gaussian field with mean zero and
covariance given by \begin{align}\label{eq1} C_{\alpha,\beta}(
\tau)=\langle X_{\alpha,\beta}(t+ \tau)X_{\alpha,\beta}(t)\rangle
=\left(1+\Vert \tau\Vert^{\alpha}\right)^{-\beta},
\end{align} where $\alpha\in (0,2]$  and $\beta>0$.\end{definition}
Note that \eqref{eq1} has the same functional form as the
characteristic function of the generalized multivariate Linnik
distribution first studied by Anderson \cite{Anderson}.
$C_{\alpha,\beta}(\tau)$ is positive-definite for the above ranges
of $\alpha$ and $\beta$, and it is  completely monotone for $0
<\alpha\leq  1$, $\beta> 0$. $X_{\alpha,\beta}(t)$ becomes the
Gaussian field with usual Cauchy covariance
 when $\alpha=2, \beta=1$.

Recall that a random field $X(t)$ is $H$-self-similar ($H$ss) if
$X(ct)=_d c^HX(t)$, where $=_d$ denotes equality in the sense of
finite-dimensional distributions of $X$. Self-similar property
requires scale invariance to hold for all scales. This is rather too
restrictive for many applications. We also know from Samorodnitsky
and Taqqu \cite{Samorodnitsky_Taqqu} that a stationary Gaussian
random field such as $X_{\alpha,\beta}(t)$ can not be a self-similar
field. However, $X_{\alpha,\beta}(t)$ satisfies a weaker
self-similar property known as local self-similarity considered by
Kent and Wood \cite{Kent_Wood}.

\begin{definition}\label{def2}Let $\alpha\in (0, 2]$. A centered stationary Gaussian field is locally
self-similar (lss) of order $\alpha/2$ if for
$\Vert\tau\Vert\rightarrow 0^+$, its covariance $C(\tau)$ satisfies
\begin{align}\label{eq2}
C(\tau)=A-B\Vert\tau\Vert^{\alpha}\left[1+O\left(\Vert
\tau\Vert^{\delta}\right)\right]
\end{align}for some positive constants $A$, $B$ and $\delta$.
\end{definition}

Since as $\Vert\tau\Vert\rightarrow 0^+$,
\begin{align}\label{eq2_2}C_{\alpha,\beta}(\tau)=1-\beta
\Vert\tau\Vert^{\alpha}\left[1+\Vert\tau\Vert^{\alpha}\right],\end{align}
 the GFGCC  $X_{\alpha,\beta}(t)$  is $\alpha/2$ -- lss, with
 $A=1$, $B=\beta$ and
 $\delta=\alpha$. Adler \cite{Adler} called the class of Gaussian fields which satisfy
\eqref{eq2} the indexed-$\alpha$ fields. These processes are also
known as the Adler processes according to some authors, for example
Lang and Roueff \cite{Lang_Roueff}. They form a very rich class of
Gaussian random fields, which include the centered Gaussian field
$\Xi_{\alpha,\beta}(t)$ with powered exponential covariance
\begin{align}\label{eq22_1} \left\langle
\Xi_{\alpha,\beta}(t+\tau)\Xi_{\alpha,\beta}(t)\right\rangle
=e^{-\beta\Vert\tau\Vert^{\alpha}},\end{align}which have the same
functional form as the characteristic function of the multivariate
symmetric stable distribution as given in Kotz, Kozubowski and
Podgorski \cite{Kotz_Kozubowski_Podgorski}, and Garoni and Frankel
\cite{Garoni_Frankel}. Instead of using \eqref{eq2} to characterize
local self-similarity, one can also use the definition of locally
asymptotically self-similar (lass) property first introduced by
Benassi, Jaffard and Roux \cite{Benassi_Jaffard_Roux} for
multifractional Brownian motion $B_{H(t)}(t)$, which is a
generalization of fractional Brownian motion with the Hurst index
replaced by the Hurst function $H(t)$, $0<H(t)<1$. It can be shown
that under some regularity conditions on $H(t)$, the multifractional
Brownian motion $B_{H(t)}(t)$  is lass. This property can be adapted
to GFGCC if we take $H(t)$ as constant with its value in (0,1).

\begin{definition}\label{def1} A stochastic process $X(t)$   is lass at a point $t_0$  with order $\kappa$ if
\begin{align}\label{eq3}\lim_{\vep\rightarrow
0^+}\left\{\frac{X(t_0+\vep
u)-X(t_0)}{\vep^{\kappa}}\right\}_{u\in\R^n} =_d T_{t_0}(u),
\end{align}and $T_{t_0}(u)$ is nontrivial. Here the convergence and equality are in the sense of finite dimensional distributions,
and $T_{t_0}(u)$ is called the tangent field of $X(t)$ at the point
$t_0$.
\end{definition} \begin{proposition}\label{p1} GFGCC is a lass random field of order $\alpha/2$; and its
tangent field is L$\acute{\text{e}}$vy fractional Brownian field of
index  $\alpha/2$.\end{proposition}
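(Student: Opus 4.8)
The plan is to verify Definition~\ref{def1} directly for $X_{\alpha,\beta}$ at an arbitrary point $t_0$ with $\kappa=\alpha/2$, and to identify the limiting field by computing the covariance of the rescaled increments. Since $X_{\alpha,\beta}$ is a centered stationary Gaussian field, the rescaled field $Y_\vep(u)=\bigl(X_{\alpha,\beta}(t_0+\vep u)-X_{\alpha,\beta}(t_0)\bigr)/\vep^{\alpha/2}$ is itself centered and Gaussian for every $\vep>0$, so convergence in the sense of finite-dimensional distributions is equivalent to pointwise convergence of the covariance function $K_\vep(u,v)=\langle Y_\vep(u)Y_\vep(v)\rangle$. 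Writing this out in terms of $C_{\alpha,\beta}$ gives
\begin{align*}
K_\vep(u,v)=\frac{1}{2\vep^{\alpha}}\Bigl(C_{\alpha,\beta}(\vep u)+C_{\alpha,\beta}(\vep v)-C_{\alpha,\beta}(\vep(u-v))-C_{\alpha,\beta}(0)\Bigr)+\frac{C_{\alpha,\beta}(0)-1}{2\vep^\alpha},
\end{align*}
which after using $C_{\alpha,\beta}(0)=1$ reduces to a combination of terms of the form $\bigl(1-C_{\alpha,\beta}(\vep w)\bigr)/\vep^\alpha$.

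The next step is to pass to the limit in each such term. By the small-$\Vert\tau\Vert$ expansion \eqref{eq2_2} already established in the excerpt, $C_{\alpha,\beta}(\tau)=1-\beta\Vert\tau\Vert^{\alpha}\bigl[1+O(\Vert\tau\Vert^\alpha)\bigr]$, so $\bigl(1-C_{\alpha,\beta}(\vep w)\bigr)/\vep^\alpha = \beta\Vert w\Vert^\alpha\bigl[1+O(\vep^\alpha\Vert w\Vert^\alpha)\bigr]\to \beta\Vert w\Vert^\alpha$ as $\vep\to 0^+$, for each fixed $w\in\R^n$. Assembling the pieces yields
\begin{align*}
\lim_{\vep\to 0^+}K_\vep(u,v)=\frac{\beta}{2}\Bigl(\Vert u\Vert^{\alpha}+\Vert v\Vert^{\alpha}-\Vert u-v\Vert^{\alpha}\Bigr).
\end{align*}
This is precisely (up to the positive constant $\beta$, which amounts to a deterministic scalar multiple, equivalently absorbing $\beta^{1/2}$ into the field) the covariance function of L\'evy fractional Brownian field of Hurst index $H=\alpha/2$ on $\R^n$, namely $\langle B_H(u)B_H(v)\rangle=\tfrac12(\Vert u\Vert^{2H}+\Vert v\Vert^{2H}-\Vert u-v\Vert^{2H})$. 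Since the pointwise limit of the covariances is itself a valid (positive-definite) covariance and the family is Gaussian, the finite-dimensional distributions converge to those of the Gaussian field with this covariance; nontriviality is clear because the limiting covariance is not identically zero. Note that the limit is independent of $t_0$, as expected from stationarity.

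I expect the genuine content of the argument to be light: the main point is simply the Gaussian reduction to covariances plus the local expansion \eqref{eq2_2}, both of which are already in hand. The one place that warrants a little care is justifying that pointwise convergence of covariances suffices for convergence of finite-dimensional distributions --- this follows because for jointly Gaussian vectors convergence of the covariance matrices (here $m\times m$ matrices $(K_\vep(u_i,u_j))_{i,j=1}^m$ for any finite set $u_1,\dots,u_m$) to a limiting covariance matrix implies convergence in distribution of the Gaussian vectors, the characteristic functions being continuous functions of the covariance entries. A secondary bookkeeping point is to make sure the $O$-term in \eqref{eq2_2} is uniform enough on the fixed finite set of arguments $\{u,v,u-v\}$ appearing in $K_\vep$, which is immediate since there are only finitely many fixed vectors $w$ and $\vep^\alpha\Vert w\Vert^\alpha\to 0$ for each. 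No real obstacle arises; the proof is essentially a computation of a limiting covariance.
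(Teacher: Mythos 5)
Your argument is essentially the paper's own proof: reduce to convergence of the covariances of the normalized increments (legitimate by Gaussianity), apply the local expansion \eqref{eq2_2}, and recognize the limit as the L\'evy fractional Brownian field covariance \eqref{eq4} up to a positive constant. One bookkeeping slip: the correct polarization is $K_\vep(u,v)=\vep^{-\alpha}\bigl[C_{\alpha,\beta}(\vep(u-v))-C_{\alpha,\beta}(\vep u)-C_{\alpha,\beta}(\vep v)+1\bigr]$, so the limit is $\beta\bigl(\Vert u\Vert^{\alpha}+\Vert v\Vert^{\alpha}-\Vert u-v\Vert^{\alpha}\bigr)$ (tangent field $\sqrt{2\beta}\,B_{\alpha/2}$, as in the paper), not $\tfrac{\beta}{2}(\cdots)$; this factor does not affect the stated conclusion.
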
 \begin{proof} By using
\eqref{eq2_2}, the covariance of the increment field $\Delta_{\tau}
X_{\alpha,\beta}(t):=X_{\alpha,\beta}(t+\tau)-X_{\alpha,\beta}(t)$,
for $ \rho$, $\sigma\rightarrow 0^+$ is given by
\begin{align*}
\left\langle \Delta_\rho X_{\alpha,\beta}(t)\Delta_\sigma
X_{\alpha,\beta}(t)\right\rangle =
\beta\left(\Vert\rho\Vert^{\alpha}+\Vert\sigma\Vert^{\alpha}-\Vert\rho-\sigma\Vert^{\alpha}\right)+O\left(
\Vert \rho\Vert^{2\alpha}, \Vert \sigma\Vert^{2\alpha}\right).
\end{align*}
Let $\rho=\vep u$ and $\sigma=\vep v$, then
\begin{align*}
&\lim_{\vep\rightarrow 0^+}\left\langle \frac{\Delta_{\vep
u}X_{\alpha,\beta}(t)}{\vep^{\alpha/2}} \frac{\Delta_{\vep
v}X_{\alpha,\beta}(t)}{\vep^{\alpha/2}}\right\rangle
\\=&\lim_{\vep\rightarrow 0^+}\left\{ \beta\left(\Vert
u\Vert^{\alpha}+\Vert v\Vert^{\alpha}-\Vert
u-v\Vert^{\alpha}\right)+O(\vep^{\alpha})\right\}\\
=&2\beta\left\langle B_{\alpha/2}(u)B_{\alpha/2}(v)\right\rangle,
\end{align*}
where  $B_{\alpha/2}(u)$ is the index--$\alpha/2$
L$\acute{\text{e}}$vy fractional Brownian field with zero mean and
covariance given by
\begin{align}\label{eq4}
\left\langle
B_{\alpha/2}(u)B_{\alpha/2}(v)\right\rangle=\frac{1}{2}\left(\Vert
u\Vert^{\alpha}+\Vert v\Vert^{\alpha}-\Vert
u-v\Vert^{\alpha}\right), \hspace{1cm} u,v\in\R^n.
\end{align}
Thus up to a multiplicative constant $\sqrt{2\beta}$, the tangent
field of
 GFGCC at any point $t_0\in\R^n$   is the L$\acute{\text{e}}$vy fractional Brownian field indexed
 by $\alpha/2$.\end{proof}
The tangent field of $X_{\alpha,\beta}(t)$  at the point  $t_0$
reflects the local structure of the random field at $t_0$. In other
words, GFGCC behaves locally like a L$\acute{\text{e}}$vy fractional
Brownian field. This provides an example to the general results on
tangent fields considered by Falconer \cite{Falconer}.

The fractal dimension of the graph of a random field $X(t)$ depends
on the local property of the random field. The local irregularities
of the graph are measured by the parameter $\alpha$, which can be
regarded as the fractal index of the random field. Thus the behavior
of the covariance function at the origin to a great extent
determines the roughness of the random field. The results on the
fractal dimension of an lss field are treated by Adler \cite{Adler},
Kent and Wood \cite{Kent_Wood}, Davies and Hall \cite{Davies_Hall}.
A nice property of $\alpha/2$--lss fields is that their fractal
dimension is determined by $\alpha$.

\begin{definition} Let $X(t)$  be a stationary Gaussian field and
let $\sigma^2(\tau)=\left\langle \Delta_{\tau}X(t)^2\right\rangle$
be the variance of the increment process
$\Delta_{\tau}X(t):=X(t+\tau)-X(t)$. If there exists $\alpha\in
(0,2]$ satisfying
\begin{align}\label{eq5}
\alpha=&\sup\left\{\varsigma\,:\,
\sigma^2(\tau)=o\left(\Vert\tau\Vert^{\varsigma}\right)\;\;\text{as}\;\;\Vert\tau\Vert\rightarrow
0\right\}\\
=&\inf\left\{\varsigma\,:\,\Vert\tau\Vert^{\varsigma}=o\left(\sigma^2(\tau)\right)\;\;\text{as}\;\;\Vert\tau\Vert\rightarrow
0\right\},\nonumber
\end{align}
then $\alpha/2$ is called the fractal index of the random field
$X(t)$. Equivalently, $\alpha/2$ is  the local  H\"older index of
the random field.\end{definition} Clearly, condition \eqref{eq5} is
fulfilled by random field which satisfies \eqref{eq2} such as the
GFGCC $X_{\alpha,\beta}(t)$. Adler had shown that $\alpha/2$ is the
upper bound of the indices for which, with probability one, the
graph of $X(t)$ satisfies a global regularity of the same order.
Thus, $\alpha$ characterizes the roughness of the sample path. The
fractal dimension of GFGCC can be obtained by using the following
result for the fractal (Hausdorff) dimension of an lss field as
given in Adler \cite{Adler}, Chapter 8.

\begin{proposition}\label{p1026_4} The fractal dimension $D$ of the graph of a
locally self-similar field $X(t), t\in\R^n$, of fractal index
$\alpha/2$, over a hyperrectangle $\mathcal{C}=\prod_{i=1}^n [a_i,
b_i]$, is given by
\begin{align}\label{eq6} D=n+1-\frac{\alpha}{2}.
\end{align}\end{proposition}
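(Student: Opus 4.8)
The plan is to establish the two bounds $D\le n+1-\alpha/2$ and $D\ge n+1-\alpha/2$ separately, the first by a covering argument resting on sample-path regularity and the second by the potential-theoretic energy method together with Frostman's lemma, along the lines of Adler \cite{Adler}, Chapter~8.

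For the upper bound I would argue as follows. Since $X(t)$ satisfies \eqref{eq2}, the increment variance obeys $\sigma^{2}(\tau)\le c_{1}\Vert\tau\Vert^{\alpha}$ for small $\Vert\tau\Vert$ and stays bounded on $\mathcal{C}-\mathcal{C}$; by Gaussianity, $\langle|\Delta_{\tau}X(t)|^{2k}\rangle=c_{k}\,\sigma^{2k}(\tau)\le c_{k}'\Vert\tau\Vert^{\alpha k}$ for every $k\in\mathbb{N}$. The Kolmogorov--Chentsov criterion then yields a modification of $X$ whose paths are, almost surely, H\"older continuous on $\mathcal{C}$ of every order $\gamma<\alpha/2$. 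For such a path, partition $\mathcal{C}$ into $\asymp m^{n}$ subcubes of side $1/m$; on each the oscillation of $X$ is $\le c\,m^{-\gamma}$, so the part of the graph above it is covered by $\lesssim m^{1-\gamma}$ cubes of side $1/m$ in $\R^{n+1}$, and the whole graph by $\lesssim m^{n+1-\gamma}$ such cubes. Hence the upper box dimension of the graph is $\le n+1-\gamma$; letting $\gamma\uparrow\alpha/2$ and using $\dim_{H}\le\overline{\dim}_{B}$ gives $D\le n+1-\alpha/2$ almost surely.

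For the lower bound, fix $\lambda<n+1-\alpha/2$ and let $\mu$ be the image of Lebesgue measure on $\mathcal{C}$ under $t\mapsto(t,X(t))$; it is an almost surely nonzero finite Borel measure carried by the graph. I would compute its expected $\lambda$-energy,
\begin{align*}
&\left\langle\int_{\mathcal{C}}\int_{\mathcal{C}}\frac{dt\,ds}{\left(\Vert t-s\Vert^{2}+(X(t)-X(s))^{2}\right)^{\lambda/2}}\right\rangle\\
&\qquad=\int_{\mathcal{C}}\int_{\mathcal{C}}\left\langle\left(\Vert t-s\Vert^{2}+(X(t)-X(s))^{2}\right)^{-\lambda/2}\right\rangle\, dt\,ds,
\end{align*}
noting that $X(t)-X(s)$ is centered Gaussian with variance $\sigma^{2}(t-s)\asymp\Vert t-s\Vert^{\alpha}$ near the diagonal (and bounded below off it, since $\sigma^{2}(\tau)>0$ for $\tau\ne0$). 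The elementary bound $\langle(a^{2}+\xi^{2})^{-\lambda/2}\rangle\le C\,\varsigma^{-1}a^{-(\lambda-1)}$ for $\xi\sim N(0,\varsigma^{2})$, $0<a\le\varsigma$, $\lambda>1$ (and $\le C\varsigma^{-\lambda}$ when $\lambda<1$) shows the integrand is $\lesssim\Vert t-s\Vert^{-\rho}$ near the diagonal with $\rho=\lambda-1+\alpha/2<n$ (respectively $\rho=\alpha\lambda/2<n$), and bounded away from the diagonal. Since $\rho<n$ the double integral converges, so the $\lambda$-energy of $\mu$ is almost surely finite, and Frostman's lemma gives $\dim_{H}(\text{graph})\ge\lambda$ almost surely. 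Taking $\lambda\uparrow n+1-\alpha/2$ along a sequence yields $D\ge n+1-\alpha/2$ almost surely, which with the upper bound proves \eqref{eq6}.

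The covering argument is routine; the main obstacle is the lower bound, and inside it the Gaussian diagonal estimate and the subsequent verification that the resulting power of $\Vert t-s\Vert$ is integrable over $\mathcal{C}\times\mathcal{C}$. This is exactly where the two-sided control $\sigma^{2}(\tau)\asymp\Vert\tau\Vert^{\alpha}$ afforded by the fractal-index hypothesis \eqref{eq5} --- equivalently \eqref{eq2} and \eqref{eq2_2} for GFGCC --- is essential, rather than merely an upper H\"older exponent.
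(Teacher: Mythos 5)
The paper does not actually prove Proposition \ref{p1026_4}: it is quoted as a known result, with the reader referred to Adler \cite{Adler}, Chapter 8, so there is no internal proof to compare against. Your blind proof supplies precisely the standard argument behind that citation, and it is essentially sound: the upper bound via the Gaussian moment bound $\langle|\Delta_\tau X(t)|^{2k}\rangle=c_k\sigma^{2k}(\tau)\lesssim\Vert\tau\Vert^{\alpha k}$, Kolmogorov--Chentsov H\"older continuity of every order $\gamma<\alpha/2$, and the oscillation/covering count $\lesssim m^{n+1-\gamma}$; the lower bound via the occupation-type measure on the graph, the Gaussian estimate $\langle(a^2+\xi^2)^{-\lambda/2}\rangle\lesssim \varsigma^{-1}a^{1-\lambda}$ (with the separate elementary bound when $\lambda\le 1$, which only matters in the trivial case $n=1$, $\alpha=2$), integrability of $\Vert t-s\Vert^{-\lambda+1-\alpha/2}$ for $\lambda<n+1-\alpha/2$, and Frostman's energy criterion. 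Two small points of precision: off a neighbourhood of the diagonal you do not need any lower bound on $\sigma^2$, since the deterministic bound $(\Vert t-s\Vert^2+\xi^2)^{-\lambda/2}\le\Vert t-s\Vert^{-\lambda}$ already suffices there; and the two-sided control $\sigma^2(\tau)\asymp\Vert\tau\Vert^{\alpha}$ follows from the lss hypothesis \eqref{eq2} (with $B>0$), not from the fractal-index condition \eqref{eq5} alone, which is weaker than \eqref{eq2} and by itself would not deliver the upper H\"older bound either --- so your closing parenthetical "equivalently" overstates matters, though it does not affect the validity of the proof under the stated hypotheses.
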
 The estimation of $\alpha$ for lss field has been studied extensively,
see for example Wood and Chan \cite{Wood_Chan}, Istas and Lang
\cite{Istas_Lang}, Kent and Wood \cite{Kent_Wood}, Lang and Roueff
\cite{Lang_Roueff}. The parameter $\beta$ is also known as topothesy
in the studies of roughness of surfaces by Wood and Chan
\cite{Wood_Chan}, and Davies and Hall \cite{Davies_Hall}. The
topothesy of a cross section provides a measure of the roughness
which is scale--dependent in contrast to fractal dimension which is
scale--invariant.

The Gaussian random field  $X_{\alpha,\beta}(t)$ can have SRD (short
range dependence) or LRD (long range dependence), depending on the
values of the parameters $\alpha$ and $\beta$. For this purpose we
make use of the following definition which is a generalization of
the one-dimensional case considered by Flandrin et al
\cite{Flandrin_Borgnat_Amblard}, Lim and Muniandy
\cite{Lim_Muniandy}:

\begin{definition} A stationary centered Gaussian field with
covariance $C(\tau)$ is said to be a long range dependent process if
\begin{align}\label{eq7}
\int_{\R_+^n}\left|C(\tau)\right|d^n\tau=\infty.
\end{align}
Otherwise it is short range dependent. \end{definition}

\begin{proposition} \label{p2}The GFGCC $X_{\alpha,\beta}(t)$ is a long range dependent random
field if and only if $0<\alpha\beta\leq n$. \end{proposition}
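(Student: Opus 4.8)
The plan is to use positivity to remove the absolute value in \eqref{eq7}, then reduce the $n$-dimensional integral to a one-dimensional one. Since $C_{\alpha,\beta}(\tau)=(1+\Vert\tau\Vert^\alpha)^{-\beta}>0$, the condition for long range dependence is simply $\int_{\R_+^n}(1+\Vert\tau\Vert^\alpha)^{-\beta}\,d^n\tau=\infty$. The integrand is radial, so passing to hyperspherical coordinates gives
\[
\int_{\R_+^n}\bigl(1+\Vert\tau\Vert^\alpha\bigr)^{-\beta}\,d^n\tau=\frac{\omega_{n-1}}{2^n}\int_0^\infty \frac{r^{n-1}}{(1+r^\alpha)^\beta}\,dr,
\]
where $\omega_{n-1}$ is the surface area of the unit sphere in $\R^n$. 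Since $2^{-n}\omega_{n-1}$ is a finite positive constant, the field is LRD if and only if the scalar integral $I:=\int_0^\infty r^{n-1}(1+r^\alpha)^{-\beta}\,dr$ diverges.

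Next I would evaluate the range of convergence of $I$. Substituting $u=r^\alpha$ turns it into $I=\alpha^{-1}\int_0^\infty u^{n/\alpha-1}(1+u)^{-\beta}\,du$, which is a Beta-type integral: with $p=n/\alpha$ and $q=\beta-n/\alpha$ one has $\int_0^\infty u^{p-1}(1+u)^{-(p+q)}\,du=B(p,q)$, convergent precisely when $p>0$ and $q>0$. Because $\alpha\in(0,2]$ and $\beta,n>0$, we always have $p=n/\alpha>0$ (so there is no divergence at the origin), while $q=\beta-n/\alpha>0$ is equivalent to $\alpha\beta>n$. Hence $I<\infty$ iff $\alpha\beta>n$, and therefore the field integral diverges — i.e. $X_{\alpha,\beta}$ is LRD — exactly when $\alpha\beta\le n$. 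Since $\alpha\beta>0$ automatically, this is the claimed condition $0<\alpha\beta\le n$.

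If one prefers not to invoke the Beta function, the same conclusion follows by splitting $I=\int_0^1+\int_1^\infty$: the first piece is finite because $r^{n-1}(1+r^\alpha)^{-\beta}$ is continuous on $[0,1]$ (here $n\ge 1$), and on $[1,\infty)$ the factor $(1+r^\alpha)^{-\beta}$ is bounded above and below by constant multiples of $r^{-\alpha\beta}$, so $\int_1^\infty$ converges iff $\int_1^\infty r^{\,n-1-\alpha\beta}\,dr$ does, i.e. iff $n-1-\alpha\beta<-1$, again giving $\alpha\beta>n$. There is no genuinely hard step; the only points requiring a little care are the harmless integrability at $r=0$ and keeping the equivalence "$n$-dimensional integral diverges $\iff$ radial integral diverges" clean, which the radial reduction handles.
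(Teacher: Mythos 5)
Your argument is correct and follows essentially the same route as the paper: reduce to a radial integral via polar coordinates, then settle convergence through the Beta-function identity (Gradshteyn--Ryzhik 3.251 no.~11) for $\alpha\beta>n$ and the tail comparison $r^{n-1}(1+r^{\alpha})^{-\beta}\sim r^{n-1-\alpha\beta}$ for divergence when $\alpha\beta\le n$. Your explicit remarks on positivity and on integrability at $r=0$ are fine but only make precise what the paper leaves implicit.
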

\begin{proof} In order to obtain the condition for the Gaussian
random field with covariance \eqref{eq1} to be LRD, we make use of
the following integral identity given in Gradshteyn and Ryzhik
\cite{Gradshteyn_Ryzhik}, 3.251, no.11:
\begin{align*}
\int_0^{\infty}
x^{\mu-1}\left(1+x^{\rho}\right)^{-\nu}dx=\frac{1}{\rho}B\left(\frac{\mu}{\rho},
\nu-\frac{\mu}{\rho}\right),
\end{align*}
where $\rho>0$, $0<\mu<\rho v$ and
$B(x,y)=\Gamma(x)\Gamma(y)/\Gamma(x+y)$  is the beta function. Using
polar coordinates, one gets
\begin{align}\label{eq1019_1}
\int_{\R_+^n}
\left|C_{\alpha,\beta}(\tau)\right|d^n\tau=&\int_{\R_+^n}\left(1+\Vert
\tau\Vert^{\alpha}\right)^{-\beta}d^n\tau\\=&\frac{2\pi^{\frac{n}{2}}}{2^n\Gamma\left(\frac{n}{2}\right)}
\int_0^{\infty} r^{n-1}(1+r^{\alpha})^{-\beta}
dr.\nonumber\end{align} For large $r$,
$$r^{n-1}(1+r^{\alpha})^{-\beta}\sim r^{n-1-\alpha\beta}.$$
Therefore, the integral \eqref{eq1019_1} is divergent for all
$\beta>0$, $0<\alpha\beta\leq n$. For $\beta>0$ and $\alpha\beta>n$,
we have
\begin{align*}
\frac{2\pi^{\frac{n}{2}}}{2^n\Gamma\left(\frac{n}{2}\right)}
\int_0^{\infty} r^{n-1}(1+r^{\alpha})^{-\beta}
dr=&\frac{\pi^{\frac{n}{2}}}{2^{n-1}\alpha\Gamma\left(\frac{n}{2}\right)}B\left(\frac{n}{\alpha},
\beta-\frac{n}{\alpha}\right)<\infty.
\end{align*}
Therefore the condition for $X_{\alpha,\beta}(t)$  to be a Gaussian
field with LRD is $0<\alpha\beta\leq n$.\end{proof}

    The discussion above shows that it is possible to characterize the
 fractal dimension $D$ and the LRD property separately. If the covariance is re-expressed as
 $\left(1+\Vert\tau\Vert^{\alpha}\right)^{-\gamma/\alpha}$,
 which behaves like $\Vert \tau\Vert^{-\gamma}$  in the large--$\Vert\tau\Vert$ limit, then
$X_{\alpha,\gamma/\alpha}(t)$ is LRD if and only if $0<\gamma\leq
n$. Thus $\alpha$ ($0<\alpha\leq 2$) and $\gamma$ ($\gamma>0$)
respectively
  provide separate characterization of fractal dimension and LRD/SRD.
 The separate characterization of the fractal dimension (local property)
  and LRD (global property) for GFGCC appears to offer a more
   natural and flexible model than that based on a single
   parameter such as in L$\acute{\text{e}}$vy fractional Brownian
field. We note that this feature of separate characterization of
local self-similarity (hence fractal dimension) and long range
dependence is present in any stationary Gaussian field with
covariance $C(\tau)$ satisfying the asymptotic behaviors
$C(\tau)\sim A-B\Vert \tau\Vert^{\alpha}$ as $\Vert \tau \Vert
\rightarrow 0^+$, and $C(\tau) \rightarrow \Vert
\tau\Vert^{-\gamma}$ as $\Vert\tau\Vert\rightarrow \infty$, with
$\alpha\in (0,2]$, $\gamma>0$. Similarly, one can also have a
Gaussian stationary process which has separate parametrization of
fractal dimension and short range dependence \cite{a}. The ability
to have separate characterization of fractal dimension and Hurst
effect is a desirable property in the modeling of physical and
geological phenomena.

\section{  Asymptotic Properties of Spectral Density of GFGCC} \label{sec3}In this section, we consider the spectral
density of $X_{\alpha,\beta}(t)$ and its asymptotic properties.
Though the covariance of GFGCC is given by a relatively simple
expression, the analytic simplicity of the covariance function is
not inherited by the corresponding spectral density. This is similar
to the case of the stationary Gaussian field $\Xi_{\alpha,\beta}(t)$
with powered exponential covariance \eqref{eq22_1} which have simple
form, but its spectral density in general does not have closed
analytic expression. In the case of GFGCC in $\R$, a detailed study
of spectral densities (in terms of probability distributions
correspond to the characteristic functions of generalized Linnik
distributions) have been carried out by Kotz et al.
\cite{Kotz_Ostrovskii_Hayfavi} for $0<\alpha<2$, $\beta=1$, $n=1$;
by Ostrovskii \cite{Ostrovskii} for $0<\alpha<2$, $\beta=1$,
$n\in\mathbb{N}$; and by Erdogan and Ostrovskii
\cite{Erdogan_Ostrovskii} for $0<\alpha<2$, $\beta>0$, $n=1$. They
employed the contour integration representations and series
expansions of the generalized Linnik distributions. However the
techniques used in these works are less accessible to practitioners.
In this section, we derive the asymptotic properties of the spectral
densities of GFGCC for $0<\alpha\leq 2$ and $\beta>0$, which can be
regarded as an extension to the results on generalized multivariate
Linnik distributions. The techniques used in our derivations are
mathematically more tractable.

Recall that the spectral density $S(\omega)$ of a stationary field
$X(t)$ is defined as the Fourier transform of its covariance
function $C(t)=\langle X(t)X(0)\rangle$:
\begin{align*}
S(\omega)=\frac{1}{(2\pi)^{n}}\int\limits_{\R^n}e^{-i\omega.t}C(t)d^nt,
\end{align*}if the integral is convergent. If the integral does not converge, we consider $C(t)$ as
 a generalized function
and define $S(\omega)$ as the  Fourier transform of $C(t)$ in the
Schwartz space of test functions \cite{GS}. Namely, for any test
function $\psi(\omega)$ in the Schwartz class of $\R^n$, we require
\begin{align*}
\langle S(\omega), \psi(\omega)\rangle = \langle C(t),
\hat{\psi}(t)\rangle,
\end{align*}where
\begin{align*}
\hat{\psi}(t) = \frac{1}{(2\pi)^n}\int\limits_{\R^n}
e^{-i\omega.t}\psi(\omega)d^n\omega.
\end{align*}
Alternatively, the spectral density can also be defined to be the
function satisfying
\begin{align*}
C(t)=\int\limits_{\R^n} e^{i\omega.t} S(\omega)d^n\omega.
\end{align*}
For GFGCC, since \begin{align}\label{eq4_27_1} J_{\nu}(z)\sim
\sqrt{\frac{\pi}{2z}}\cos\left(z-\frac{\pi\nu}{2}-\frac{\pi}{4}\right)
\end{align}as $z\rightarrow \infty$ (\cite{AAR}, page 209), we find that when $\alpha\beta>\frac{n-1}{2}$,
it's spectral density is
\begin{align}\label{eq1031_7}
S_{\alpha,\beta}(\omega)=\frac{1}{(2\pi)^n}\int\limits_{\R^n}
\frac{e^{i\omega.t}}{(1+\Vert t\Vert^{\alpha})^{\beta}}d^nt=
\frac{\Vert\omega\Vert^{\frac{2-n}{2}}}{(2\pi)^{\frac{n}{2}}}\int_0^{\infty}
\frac{J_{\frac{n-2}{2}}(\Vert\omega\Vert
t)}{(1+t^{\alpha})^{\beta}}t^{\frac{n}{2}}dt,
\end{align}where $J_{\nu}(z)$ is the Bessel function.    When $\alpha=2$, using
no.4 of 6.565 in  \cite{Gradshteyn_Ryzhik}, we have the explicit
formula
\begin{align}\label{eq1031_4}
S_{2,\beta}(\omega)=\frac{\Vert
\omega\Vert^{\beta-\frac{n}{2}}}{2^{\frac{n}{2}+\beta-1}\pi^{\frac{n}{2}}\Gamma(\beta)}K_{\frac{n}{2}-\beta}
(\Vert\omega\Vert),
\end{align}if $\beta>(n-1)/4$. Here $K_{\nu}(z)$ is the modified Bessel function. The
formula no.7 of 6.576 in  \cite{Gradshteyn_Ryzhik} shows that if
$\beta\in (0,n)$, then
\begin{align*} &\int\limits_{\R^n}e^{i\omega.t}\left(\frac{\Vert
\omega\Vert^{\beta-\frac{n}{2}}}{2^{\frac{n}{2}+\beta-1}\pi^{\frac{n}{2}}\Gamma(\beta)}K_{\frac{n}{2}-\beta}
(\Vert\omega\Vert)\right)d^n\omega\\=&(2\pi)^{\frac{n}{2}}\int_0^{\infty}
\frac{J_{\frac{n-2}{2}}(\omega \Vert t\Vert)}{(\omega \Vert
t\Vert)^{\frac{n-2}{2}}}\left(\frac{
\omega^{\beta-\frac{n}{2}}}{2^{\frac{n}{2}+\beta-1}\pi^{\frac{n}{2}}\Gamma(\beta)}K_{\frac{n}{2}-\beta}
( \omega )\right)\omega^{n-1}d\omega=\left(1+\Vert
t\Vert^{2}\right)^{-\beta}.
\end{align*}Therefore \eqref{eq1031_4} is still  the spectral density  when $\beta\in(0,(n-1)/4]$. For
general $\alpha<2$, no  explicit formula such as \eqref{eq1031_4}
can be found for $S_{\alpha,\beta}(\omega)$. When $n=1$, the formula
\eqref{eq1031_7} gives the spectral density of $X_{\alpha,\beta}(t)$
for all values of $\alpha\in (0,2]$ and $\beta>0$. For $n\geq 2$, we
would also like to find a formula for the spectral density that is
valid for all $\alpha\in (0,2)$ and $\beta>0$. For this purpose, it
would be beneficial to investigate the case $n=1$ first. When $n=1$,
we can rewrite \eqref{eq1031_7} as
\begin{align*}
S_{\alpha,\beta}(\omega)=\frac{1}{\pi}\text{Re}\,\int_0^{\infty}\frac{e^{i|\omega|
t}}{(1+t^{\alpha})^{\beta}}dt.
\end{align*}Let $$f(\zeta)=\frac{e^{i|\omega|\zeta}}{(1+\zeta^{\alpha})^{\beta}}, \hspace{1cm}
-\pi<\text{arg}\, \zeta\leq \pi,$$ and consider the region
$\mathfrak{D}_r$ in the complex plane defined by
$$\mathfrak{D}_r=\Bigl\{ z\in \C\,:\, |z|\leq r, \,\text{Re}\,z>0,
\,\text{Im}\,z>0\Bigr\}.$$When $\alpha\in(0,2)$, the function $f$ is
an analytic function on the domain $\mathfrak{D}_r$. Therefore, by
Cauchy integral formula,
\begin{align}\label{eq1031_6}
\oint\limits_{\pa\mathfrak{D}_r}f(\zeta)d\zeta=0.
\end{align}Notice that the boundary of $\mathfrak{D}_r$,
$\pa\mathfrak{D}_r$, consists of three components: the line segment
$l_{r,1}$ along the real axis from $0$ to $r$, the arc $C_r$ of the
circle $|z|=r$ from $r$ to $ir$, and the line segment $l_{r,2}$
along the imaginary axis from $ir$ to $0$. On the arc $C_r$, if
$r>1$, then
\begin{align*}
|f(\zeta)|\leq \frac{e^{-|\omega| \text{Im}\,
\zeta}}{(r^{\alpha}-1)^{\beta}}.
\end{align*}Therefore
\begin{align*}
\lim_{r\rightarrow\infty}\int_{C_r}f(\zeta)d\zeta=0,
\end{align*}and \eqref{eq1031_6} implies that
\begin{align*}
\lim_{r\rightarrow \infty}\int_{l_{r,1}} f(\zeta)d\zeta
=-\lim_{r\rightarrow \infty}\int_{l_{r,2}}f(\zeta) d\zeta.
\end{align*}This gives us
\begin{align}\label{eq1101_2}
S_{\alpha,\beta}(\omega) =&
\frac{1}{\pi}\text{Re}\,\int_0^{\infty}\frac{e^{i|\omega|t}dt}{(1+t^{\alpha})^{\beta}}=
-\frac{1}{\pi}\text{Im}\,\int_0^{\infty}
\frac{e^{-|\omega|u}}{\left(1+e^{\frac{i\pi\alpha}{2}}u^{\alpha}\right)^{\beta}}du.
\end{align}
 For $n\geq 2$, we can derive a formula similar to \eqref{eq1101_2}. Recall that
the Hankel's function of the first kind $H_{\nu}^{(1)}(z)$ is
defined as $$H_{\nu}^{(1)}(z)=J_{\nu}(z)+iN_{\nu}(z),$$ where
$N_{\nu}(z)$ is the modified Bessel function of the second kind or
called the Neumann function. Using Hankel's function, we can rewrite
\eqref{eq1031_7} as
\begin{align*}
S_{\alpha,\beta}(\omega)=
\frac{\Vert\omega\Vert^{\frac{2-n}{2}}}{(2\pi)^{\frac{n}{2}}}\text{Re}\,\int_0^{\infty}
\frac{H_{\frac{n-2}{2}}^{(1)}(\Vert\omega\Vert
t)}{(1+t^{\alpha})^{\beta}}t^{\frac{n}{2}}dt.
\end{align*}For $z\rightarrow \infty$, we have
(\cite{Gradshteyn_Ryzhik}, no.3 of 8.451)
\begin{align*}
H_{\nu}^{(1)}(z)\sim \sqrt{\frac{2}{\pi
z}}\exp\left\{i\left(z-\frac{\pi\nu}{2}-\frac{\pi}{4}\right)\right\}.
\end{align*}Therefore, we can show as in the $n=1$ case that
\begin{align*}
S_{\alpha,\beta}(\omega)=-\frac{\Vert\omega\Vert^{\frac{2-n}{2}}}{(2\pi)^{\frac{n}{2}}}\text{Im}\,\int_0^{\infty}
\frac{H_{\frac{n-2}{2}}^{(1)}(i\Vert\omega\Vert
u)}{\left(1+e^{\frac{i\pi\alpha}{2}}u^{\alpha}\right)^{\beta}}(iu)^{\frac{n}{2}}du.
\end{align*}Using the formula (\cite{Gradshteyn_Ryzhik},  no.1 of 8.407)
\begin{align*}
K_{\nu}(z)=\frac{i\pi}{2}e^{\frac{i\nu\pi}{2}}H_{\nu}^{(1)}(iz),
\end{align*} we have
finally
\begin{align}\label{eq1031_8}
S_{\alpha,\beta}(\omega)=-\frac{\Vert\omega\Vert^{\frac{2-n}{2}}}{2^{\frac{n-2}{2}}\pi^{\frac{n+2}{2}}}\text{Im}\,\int_0^{\infty}
\frac{K_{\frac{n-2}{2}}(\Vert\omega\Vert
u)}{\left(1+e^{\frac{i\pi\alpha}{2}}u^{\alpha}\right)^{\beta}}u^{\frac{n}{2}}du.
\end{align}This formula agrees with the formula for multivariate Linnik
distribution proved in \cite{Ostrovskii} for $\alpha\in (0,2),
\beta=1$ and $n\in\mathbb{N}$. Notice that the right hand side of
\eqref{eq1031_8} is well-defined for all $\alpha,\beta>0$. Using the
formula (no.2 of 6.521 in \cite{Gradshteyn_Ryzhik}),
\begin{align*}
\int_0^{\infty}
xK_{\nu}(ax)J_{\nu}(bx)dx=\frac{b^{\nu}}{a^{\nu}(a^2+b^2)},
\hspace{1cm}\nu>-1,
\end{align*}we have
\begin{align*}
&\int\limits_{\R^n} e^{i\omega.t} \left\{-\frac{ \Vert\omega\Vert
^{\frac{2-n}{2}}}{2^{\frac{n-2}{2}}\pi^{\frac{n+2}{2}}}\text{Im}\,\int_0^{\infty}
\frac{K_{\frac{n-2}{2}}( \Vert\omega\Vert
u)}{\left(1+e^{\frac{i\pi\alpha}{2}}u^{\alpha}\right)^{\beta}}u^{\frac{n}{2}}du\right\}d^n\omega\\=&(2\pi)^{\frac{n}{2}}\Vert
t\Vert^{\frac{2-n}{2}}\int_0^{\infty} J_{\frac{n-2}{2}}(\omega \Vert
t\Vert)\omega^{\frac{n}{2}}\left\{-\frac{ \omega
^{\frac{2-n}{2}}}{2^{\frac{n-2}{2}}\pi^{\frac{n+2}{2}}}\text{Im}\,\int_0^{\infty}
\frac{K_{\frac{n-2}{2}}( \omega
u)}{\left(1+e^{\frac{i\pi\alpha}{2}}u^{\alpha}\right)^{\beta}}u^{\frac{n}{2}}du\right\}d\omega\\
=&-\frac{2\Vert
t\Vert^{\frac{2-n}{2}}}{\pi}\text{Im}\,\int_0^{\infty}\frac{u^{\frac{n}{2}}}
{\left(1+e^{\frac{i\pi\alpha}{2}}u^{\alpha}\right)^{\beta}}\int_0^{\infty}\omega
K_{\frac{n-2}{2}}(\omega u)J_{\frac{n-2}{2}}(\omega \Vert
t\Vert)d\omega du\\
=&-\frac{2}{\pi}\text{Im}\,\int_0^{\infty}\frac{u}
{\left(1+e^{\frac{i\pi\alpha}{2}}u^{\alpha}\right)^{\beta}(u^2+\Vert
t\Vert^2)}du\end{align*}
\begin{align*}
=&-\frac{1}{\pi i}\int_{-\infty}^{\infty}\frac{u}
{\left(1+e^{\frac{i\pi\alpha}{2}}u^{\alpha}\right)^{\beta}(u^2+\Vert
t\Vert^2)}du.\end{align*} When $\alpha\in (0,2)$, residue calculus
implies that this last integral is equal to \begin{align*}&2
\text{Res}_{u=-i\Vert t\Vert}\frac{u}{(u-i\Vert
t\Vert)\left(1+e^{\frac{i\pi\alpha}{2}}u^{\alpha}\right)^{\beta}}=\frac{1}{(1+\Vert
t\Vert^{\alpha})^{\beta}}.
\end{align*}This shows that \eqref{eq1031_8} is indeed the spectral
density of GFGCC for all $\alpha\in (0,2)$ and $\beta>0$. We would
also like to remark that although the formula \eqref{eq1031_8} is
derived under the assumption $n\geq 2$, but since
$$K_{-1/2}(z)=\sqrt{\frac{\pi}{2z}}e^{-z},$$ therefore when $n=1$, the
formula \eqref{eq1031_8} reduces to the formula \eqref{eq1101_2}. We
summarize the result as follows.

\begin{proposition}If $\alpha\in (0,2)$ and $\beta>0$, the spectral density of the GFGCC
 $X_{\alpha,\beta}(t)$ is given by
\begin{align}\label{eq1031_8_2}
S_{\alpha,\beta}(\omega)=
-\frac{\Vert\omega\Vert^{\frac{2-n}{2}}}{2^{\frac{n-2}{2}}\pi^{\frac{n+2}{2}}}\text{Im}\,\int_0^{\infty}
\frac{K_{\frac{n-2}{2}}(\Vert\omega\Vert
u)}{\left(1+e^{\frac{i\pi\alpha}{2}}u^{\alpha}\right)^{\beta}}u^{\frac{n}{2}}du.
\end{align}If $\alpha=2$ and $\beta>0$, the spectral density of the
GFGCC $X_{2,\beta}(t)$ is given by
\begin{align}\label{eq1031_4_2}
S_{2,\beta}(\omega)=\frac{\Vert
\omega\Vert^{\beta-\frac{n}{2}}}{2^{\frac{n}{2}+\beta-1}\pi^{\frac{n}{2}}\Gamma(\beta)}K_{\frac{n}{2}-\beta}
(\Vert\omega\Vert).
\end{align}

\end{proposition}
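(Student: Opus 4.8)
The statement simply records the two spectral-density formulas produced in the discussion above, so the plan is to package those computations; I sketch the structure and indicate where the real work sits. For $\alpha=2$ I would specialize the radial integral \eqref{eq1031_7}, which then becomes the Hankel transform of $(1+t^2)^{-\beta}$; formula 6.565.4 of \cite{Gradshteyn_Ryzhik} evaluates it in closed form and gives \eqref{eq1031_4_2}, but a priori only for $\beta>(n-1)/4$, the convergence threshold forced by the Bessel asymptotics \eqref{eq4_27_1}. To cover all $\beta>0$ I would then argue in reverse: by 6.576.7 of \cite{Gradshteyn_Ryzhik} the inverse Fourier transform of the right-hand side of \eqref{eq1031_4_2} equals $(1+\Vert t\Vert^2)^{-\beta}$ for every $\beta\in(0,n)$, and since $(n-1)/4<n$ the two ranges overlap; hence \eqref{eq1031_4_2} is valid for all $\beta>0$, as an ordinary density where its defining integral converges and as a tempered-distributional Fourier transform otherwise.

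For $\alpha\in(0,2)$ I would again start from \eqref{eq1031_7} under the standing hypothesis $\alpha\beta>(n-1)/2$, write $J_{(n-2)/2}=\text{Re}\,H^{(1)}_{(n-2)/2}$, and rotate the contour of the half-line integral from $\R_+$ to the positive imaginary axis. The mechanism is Cauchy's theorem applied to $f(\zeta)=H^{(1)}_{(n-2)/2}(\Vert\omega\Vert\zeta)\,\zeta^{n/2}(1+\zeta^\alpha)^{-\beta}$ over the quarter-disc $\mathfrak{D}_r$: with the principal branch the zeros of $1+\zeta^\alpha$ sit on the rays $\arg\zeta=\pm\pi/\alpha$, hence (as $\alpha<2$) outside the closed first quadrant, while the branch cut lies along the negative real axis, so $f$ is holomorphic on $\mathfrak{D}_r$; and the decay $H^{(1)}_\nu(z)\sim\sqrt{2/(\pi z)}\,e^{i(z-\pi\nu/2-\pi/4)}$ in the upper half-plane kills the arc contribution as $r\to\infty$. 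Inserting $K_\nu(z)=\tfrac{i\pi}{2}e^{i\nu\pi/2}H^{(1)}_\nu(iz)$ converts the imaginary-axis integral into \eqref{eq1031_8_2}. The payoff is that, whereas \eqref{eq1031_7} requires $\alpha\beta>(n-1)/2$, the right-hand side of \eqref{eq1031_8_2} converges for every $\alpha,\beta>0$, since $K_{(n-2)/2}(\Vert\omega\Vert u)$ decays exponentially at $u=\infty$ and, for $n\ge1$, is integrable against $u^{n/2}$ near $u=0$.

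It then remains to promote \eqref{eq1031_8_2} from ``valid where \eqref{eq1031_7} was'' to ``valid on all of $\alpha\in(0,2),\ \beta>0$'', which I would do by checking the Fourier inversion directly. Passing to radial coordinates and applying 6.521.2 of \cite{Gradshteyn_Ryzhik}, $\int_0^\infty xK_\nu(ax)J_\nu(bx)\,dx=b^\nu a^{-\nu}(a^2+b^2)^{-1}$, collapses the $n$-dimensional Fourier transform of the right-hand side of \eqref{eq1031_8_2} to $-\tfrac{1}{\pi i}\int_{-\infty}^{\infty}u\,(1+e^{i\pi\alpha/2}u^\alpha)^{-\beta}(u^2+\Vert t\Vert^2)^{-1}\,du$, the passage from the half-line to the whole line resting on the symmetry of the imaginary part together with the branch convention $(-u)^\alpha=e^{-i\pi\alpha}u^\alpha$ for $u>0$ (equivalently, the branch of $u^\alpha$ analytic in the lower half-plane). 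Since $\alpha<2$ leaves this integrand analytic, zero-free, and $O(|u|^{-1-\alpha\beta})$ in the lower half-plane, closing the contour downward encloses only the simple pole at $u=-i\Vert t\Vert$, and its residue returns exactly $(1+\Vert t\Vert^\alpha)^{-\beta}$, i.e.\ the covariance \eqref{eq1}. Lastly, since $K_{-1/2}(z)=\sqrt{\pi/(2z)}\,e^{-z}$, putting $n=1$ in \eqref{eq1031_8_2} recovers the one-dimensional formula \eqref{eq1101_2}, so that case needs no separate handling. I expect the two contour arguments to be the only genuinely delicate steps --- the uniform decay estimate on the quarter-circle arc, and the bookkeeping for the branch of $\zeta^\alpha$ (equivalently $u^\alpha$ across the negative real axis) in the residue evaluation --- with the table integrals themselves routine.
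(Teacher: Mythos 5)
Your proposal follows essentially the same route as the paper: formula 6.565.4 plus the 6.576.7 inversion check for $\alpha=2$, the quarter-disc contour rotation of the Hankel-function representation combined with $K_\nu(z)=\tfrac{i\pi}{2}e^{i\nu\pi/2}H^{(1)}_\nu(iz)$ for $\alpha\in(0,2)$, and the final validation via 6.521.2 and the residue at $u=-i\Vert t\Vert$, with the $n=1$ case recovered through $K_{-1/2}$. Your added bookkeeping on the branch of $\zeta^\alpha$ and the half-line-to-whole-line symmetry only makes explicit steps the paper leaves implicit, so the argument is correct and matches the paper's proof.
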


To find the high frequency  behavior of the spectral density, we
first consider the case where $\alpha=2$. Using the fact that
(\cite{Gradshteyn_Ryzhik}, no.6 of 8.451) as $z\rightarrow \infty$,
\begin{align*}
K_{\nu}(z)\sim
\sqrt{\frac{\pi}{2z}}e^{-z}\sum_{j=0}^{\infty}\frac{1}{(2z)^j}\frac{\Gamma\left(\nu+j+\frac{1}{2}\right)
}{j!\Gamma\left(\nu-j+\frac{1}{2}\right)}.
\end{align*}This implies that as $\Vert\omega\Vert\rightarrow
\infty$,
\begin{align}\label{eq1112_11}
S_{2,\beta}(\omega)\sim
\frac{\Vert\omega\Vert^{\beta-\frac{n+1}{2}}}{2^{\frac{n-1}{2}+\beta}\pi^{\frac{n-1}{2}}\Gamma(\beta)}e^{-\Vert
\omega\Vert}\sum_{j=0}^{\infty}\frac{1}{(2\Vert\omega\Vert)^j}\frac{\Gamma\left(j-\beta+\frac{n+1}{2}\right)
}{j!\Gamma\left(\frac{n+1}{2}-j-\beta\right)},
\end{align}with leading term
\begin{align*}
S_{2,\beta}(\omega)\sim
\frac{\Vert\omega\Vert^{\beta-\frac{n+1}{2}}}{2^{\frac{n-1}{2}+\beta}\pi^{\frac{n-1}{2}}\Gamma(\beta)}e^{-\Vert
\omega\Vert}.
\end{align*}

For general $\alpha\in (0,2)$, to find the high frequency behavior
of $S_{\alpha,\beta}(\omega)$, we make use of eq.
\eqref{eq1031_8_2}. Making a change of variable and using
\begin{align}\label{eq1112_1}
\frac{1}{\left(1+e^{\frac{i\pi\alpha}{2}}\frac{u^{\alpha}}{\Vert\omega\Vert^{\alpha}}\right)^{\beta}}=
\sum_{j=0}^{m}\frac{(-1)^j}{j!}\frac{\Gamma(\beta+j)}{\Gamma(\beta)}
e^{\frac{i\pi\alpha j}{2}}\frac{u^{\alpha
j}}{\Vert\omega\Vert^{\alpha j}}+O(\Vert
\omega\Vert^{-\alpha(m+1)}),\end{align} as
$\Vert\omega\Vert\rightarrow \infty$, we find that
\begin{align*}
S_{\alpha,\beta}(\omega)=&
-\frac{\Vert\omega\Vert^{-n}}{2^{\frac{n-2}{2}}\pi^{\frac{n+2}{2}}}
\text{Im}\,\int_0^{\infty}
 K_{\frac{n-2}{2}}(
u)\frac{u^{\frac{n}{2}}}{\left(1+e^{\frac{i\pi\alpha}{2}}\frac{u^{\alpha}}{\Vert\omega\Vert^{\alpha}}\right)^{\beta}}du
\\=&-\frac{\Vert\omega\Vert^{-n}}{2^{\frac{n-2}{2}}\pi^{\frac{n+2}{2}}}
\frac{1}{\Gamma(\beta)}\text{Im}\,\int_0^{\infty}
 K_{\frac{n-2}{2}}(
u) \sum_{j=0}^{m} \frac{(-1)^j}{j!}\Gamma(\beta+j) e^{\frac{i\pi
\alpha j}{2}}\frac{u^{\alpha j}}{\Vert \omega\Vert^{\alpha
j}}u^{\frac{n}{2}}
du\\
&+O\left(\Vert\omega\Vert^{-\alpha(m+1)-n}\right)\hspace{5cm}\text{as}\;\;\Vert\omega\Vert\rightarrow
\infty.
\end{align*}Using the formula (\cite{Gradshteyn_Ryzhik}, no.16 of 6.561)
\begin{align*}
\int_0^{\infty}
x^{\mu}K_{\nu}(x)dx=2^{\mu-1}\Gamma\left(\frac{1+\mu+\nu}{2}\right)\Gamma\left(\frac{1+\mu-\nu}{2}\right),
\hspace{1cm}\text{Re}\, (\mu+1-|\nu|)>0,
\end{align*}and the definition of asymptotic expansion (\cite{AAR}, page 611), we
 conclude that as $\Vert\omega\Vert \rightarrow \infty$,
$S_{\alpha,\beta}(\omega)$ behaves asymptotically as
\begin{align}\label{eq1112_10}
\frac{1}{ \pi^{\frac{n+2}{2}}}
\frac{1}{\Gamma(\beta)}\sum_{j=1}^{\infty}\frac{(-1)^{j-1}2^{\alpha
j }}{j!}\Gamma(\beta+j)\Gamma\left(\frac{\alpha
j+n}{2}\right)\Gamma\left(\frac{\alpha
j+2}{2}\right)\sin\frac{\pi\alpha j}{2}\Vert\omega\Vert^{-\alpha
j-n}.
\end{align}
Notice that there is a drastic change of  high frequency limit of
$S_{\alpha,\beta}(\omega)$ when $\alpha<2$ and $\alpha=2$. In fact,
naively  putting $\alpha=2$ in \eqref{eq1112_10}   give identically
zero terms. This is a hint that as $\Vert\omega\Vert\rightarrow
\infty$, $S_{2,\beta}(\omega)$ does not have  polynomial decay,
instead it decays exponentially as is verified by \eqref{eq1112_11}.
We summarize the results as follows:
\begin{proposition}\label{p1113_1}
If $\alpha\in (0,2)$ and $\beta>0$, the high frequency limit of the
spectral density $S_{\alpha,\beta}(\omega)$ is given by the
following asymptotic series
\begin{align}\label{eq1112_2}
S_{\alpha,\beta}(\omega)\sim\frac{1}{ \pi^{\frac{n+2}{2}}}
\frac{1}{\Gamma(\beta)}\sum_{j=1}^{\infty}\frac{(-1)^{j-1}2^{\alpha
j }}{j!}\Gamma(\beta+j)\Gamma\left(\frac{\alpha
j+n}{2}\right)\Gamma\left(\frac{\alpha
j+2}{2}\right)\sin\frac{\pi\alpha j}{2}\Vert\omega\Vert^{-\alpha
j-n}.
\end{align}If $\alpha=2$ and $\beta>0$, the high frequency limit of the
spectral density $S_{2,\beta}(\omega)$ is given by the following
asymptotic series
\begin{align*}S_{2,\beta}(\omega)\sim
\frac{\Vert\omega\Vert^{\beta-\frac{n+1}{2}}}{2^{\frac{n-1}{2}+\beta}\pi^{\frac{n-1}{2}}\Gamma(\beta)}e^{-\Vert
\omega\Vert}\sum_{j=0}^{\infty}\frac{1}{(2\Vert\omega\Vert)^j}\frac{\Gamma\left(j-\beta+\frac{n+1}{2}\right)
}{j!\Gamma\left(\frac{n+1}{2}-j-\beta\right)}.\end{align*}
\end{proposition}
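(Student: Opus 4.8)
The plan is to treat the ranges $\alpha\in(0,2)$ and $\alpha=2$ separately, since the integral representations of $S_{\alpha,\beta}$ available in the two cases are of an entirely different analytic character.

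For $\alpha=2$ everything will follow from the closed form \eqref{eq1031_4_2}. I would substitute the classical large-argument expansion $K_{\nu}(z)\sim\sqrt{\pi/(2z)}\,e^{-z}\sum_{j\geq0}(2z)^{-j}\Gamma(\nu+j+\tfrac{1}{2})/\bigl(j!\,\Gamma(\nu-j+\tfrac{1}{2})\bigr)$ with $\nu=\tfrac{n}{2}-\beta$ directly into $S_{2,\beta}(\omega)=\Vert\omega\Vert^{\beta-n/2}K_{n/2-\beta}(\Vert\omega\Vert)/\bigl(2^{n/2+\beta-1}\pi^{n/2}\Gamma(\beta)\bigr)$, noting that $\nu+j+\tfrac{1}{2}=j-\beta+\tfrac{n+1}{2}$ and $\nu-j+\tfrac{1}{2}=\tfrac{n+1}{2}-j-\beta$, and then simply collect the powers of $2$, $\pi$ and $\Vert\omega\Vert$. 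This reproduces \eqref{eq1112_11}; it is pure bookkeeping and needs no convergence argument beyond the validity of the Bessel asymptotics.

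For $\alpha\in(0,2)$ I would start from \eqref{eq1031_8_2} and rescale $u\mapsto u/\Vert\omega\Vert$, which turns the Bessel argument into $u$ and extracts a factor $\Vert\omega\Vert^{-n}$, giving
\[
S_{\alpha,\beta}(\omega)=-\frac{\Vert\omega\Vert^{-n}}{2^{\frac{n-2}{2}}\pi^{\frac{n+2}{2}}}\,\text{Im}\int_0^{\infty}K_{\frac{n-2}{2}}(u)\,u^{\frac{n}{2}}\Bigl(1+e^{\frac{i\pi\alpha}{2}}u^{\alpha}/\Vert\omega\Vert^{\alpha}\Bigr)^{-\beta}du .
\]
Next I would expand the last factor by the binomial series \eqref{eq1112_1}, integrate term by term using the Mellin integral $\int_0^\infty x^{\mu}K_{\nu}(x)\,dx=2^{\mu-1}\Gamma\bigl(\tfrac{1+\mu+\nu}{2}\bigr)\Gamma\bigl(\tfrac{1+\mu-\nu}{2}\bigr)$ with $\mu=\tfrac{n}{2}+\alpha j$ and $\nu=\tfrac{n-2}{2}$ (so the two $\Gamma$-arguments become $\tfrac{\alpha j+n}{2}$ and $\tfrac{\alpha j+2}{2}$), and take imaginary parts, which converts $e^{i\pi\alpha j/2}$ into $\sin\tfrac{\pi\alpha j}{2}$ and in particular annihilates the $j=0$ term. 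Keeping track of the constants (the powers of $2$ combine to $2^{\alpha j}$ and the sign becomes $(-1)^{j-1}$) then yields precisely the series in \eqref{eq1112_2}.

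The only step that requires a genuine argument is showing this termwise procedure is legitimate in the sense of an asymptotic expansion, i.e. that truncating \eqref{eq1112_1} after the order-$m$ term leaves a remainder integral of size $O(\Vert\omega\Vert^{-\alpha(m+1)-n})$. This is exactly where the hypothesis $\alpha<2$ enters: for $\alpha\in(0,2)$ and $t\geq0$ the point $z=e^{i\pi\alpha/2}t$ stays off the branch cut $(-\infty,-1]$ of $w\mapsto(1+w)^{-\beta}$, and an elementary estimate gives $|1+e^{i\pi\alpha/2}t|\geq|\sin(\pi\alpha/2)|>0$, so the Taylor remainder of $(1+z)^{-\beta}$ along this ray is bounded by $C_m|z|^{m+1}$ uniformly in $t$. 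Feeding this bound into the integral and using the exponential decay of $K_{(n-2)/2}$ at infinity together with its integrable singularity at the origin bounds the leftover by $C_m\Vert\omega\Vert^{-\alpha(m+1)}$, and the prefactor $\Vert\omega\Vert^{-n}$ supplies the rest. I expect this uniform remainder estimate to be the main (though technically mild) obstacle. It also explains the ``drastic change'' noted in the text: at $\alpha=2$ the ray $\{e^{i\pi\alpha/2}t\}$ runs into the cut, the series above collapses to zero, and the true behaviour becomes the exponential decay captured by the $\alpha=2$ formula.
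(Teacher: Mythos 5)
Your proposal is correct and follows essentially the same route as the paper: for $\alpha=2$ it substitutes the large-$z$ expansion of $K_{\nu}$ into the closed form \eqref{eq1031_4_2}, and for $\alpha\in(0,2)$ it rescales \eqref{eq1031_8_2}, applies the binomial expansion \eqref{eq1112_1}, and integrates termwise via $\int_0^\infty x^{\mu}K_{\nu}(x)\,dx=2^{\mu-1}\Gamma\bigl(\tfrac{1+\mu+\nu}{2}\bigr)\Gamma\bigl(\tfrac{1+\mu-\nu}{2}\bigr)$, exactly as in the text. Your uniform remainder bound using $\bigl|1+e^{i\pi\alpha/2}t\bigr|\geq\sin\tfrac{\pi\alpha}{2}$ is a welcome elaboration of a step the paper leaves implicit, but it is a refinement of the same argument rather than a different approach.
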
When $\alpha\in (0,2),\beta=1, n\in\mathbb{N}$ and
$\alpha\in (0,2), \beta>0, n=1$, \eqref{eq1112_2} agrees with the
results given in   \cite{Ostrovskii} and  \cite{Erdogan_Ostrovskii}
respectively. In particular, we observe that when $\alpha\in (0,2)$,
the high frequency behavior of the spectral  density of GFGCC is
\begin{align}\label{eq1112_3}S_{\alpha,\beta}(\omega)\sim
\frac{2^{\alpha}\beta}{\pi^{\frac{n+2}{2}}}\Gamma\left(
\frac{\alpha+n}{2}\right)\Gamma\left(\frac{\alpha+2}{2}\right)\sin\frac{\pi\alpha}{2}\Vert
\omega\Vert^{-\alpha-n}\rightarrow 0^+,
\hspace{1cm}\Vert\omega\Vert\rightarrow \infty,\end{align}which is
independent of $\beta$. Kent and Wood \cite{Kent_Wood} have shown
that if a random field has spectral density satisfying
\eqref{eq1112_3}, then its covariance satisfies \eqref{eq2} with
locally self--similar property. However, the converse is not true.

In this connection we remark that for the Gaussian stationary field
$\Xi_{\alpha,\beta}(t)$ with powered exponential covariance which is
lss with
\begin{align}\label{eq22}
\left\langle
\Xi_{\alpha,\beta}(t+\tau)\Xi_{\alpha,\beta}(t)\right\rangle
=e^{-\beta\Vert\tau\Vert^{\alpha}}=1-\beta\Vert\tau\Vert^{\alpha}\left[1+O(\Vert\tau\Vert^{\alpha})\right],
\hspace{1cm}\Vert\tau\Vert\rightarrow 0^+,
\end{align}its small $\Vert\tau\Vert$ behavior has a similar form
as that of GFGCC \eqref{eq2_2}. Thus it is not surprising that this
two random fields have the same tail behavior for their spectral
densities at high frequencies as given by \eqref{eq1112_3}. The
detailed calculation carried out by Garoni and Frankel
\cite{Garoni_Frankel} for the probability distribution of the
multivariate L$\acute{\text{e}}$vy stable distribution with
characteristic function given by \eqref{eq22} confirms this. We also
note that \eqref{eq1112_2} can be used to verify that the tangent
field at any point $t_0$ has spectral density which varies as $\Vert
\omega\Vert^{-\alpha-n}$ for $\Vert \omega\Vert\rightarrow \infty$.
If we let $\alpha=2H$, then the tangent field is just the
L$\acute{\text{e}}$vy fractional Brownian field in $\R^n$. Such a
relationship can be viewed as a consequence of the
Tauberian--Abelian theorem (see e.g. \cite{Korevaar}).

 For the low frequency  behavior of the spectral density $S_{\alpha,\beta}(\omega)$, we
 first consider the case $\alpha=2$.  Using 8.485, 8.445, 8.446 of
\cite{Gradshteyn_Ryzhik}, we find that if $\nu\notin\Z$,
\begin{align}\label{eq1031_2}
K_{\nu}(z)=K_{-\nu}(z)=\frac{\pi}{2\sin (\pi
\nu)}\left\{\sum_{j=0}^{\infty}
\frac{(z/2)^{2j-\nu}}{j!\Gamma(j+1-\nu)}-\sum_{j=0}^{\infty}\frac{(z/2)^{2j+\nu}}{j!
\Gamma(j+1+\nu)}\right\};
\end{align}whereas when $\nu=\pm m$, where $m$ is a nonnegative
integer,
\begin{align}\label{eq1031_3}
K_{\nu}(z)=&\frac{1}{2}\sum_{j=0}^{m-1}\frac{(-1)^j
(m-j-1)!}{j!}\left(\frac{z}{2}\right)^{2j-m}
\\&+(-1)^{m+1}\sum_{j=0}^{\infty}\frac{(z/2)^{m+2j}}
{j!(m+j)!}\left\{\ln\frac{z}{2}-\frac{1}{2}\psi(j+1)-\frac{1}{2}\psi(j+1+m)\right\}.\nonumber
\end{align}Here $\psi(z)=\Gamma'(z)/\Gamma(z)$ is the logarithm
derivative of the Gamma function. Therefore from \eqref{eq1031_4_2},
 we find that:

\vspace{0.2cm}\noindent $\bullet$\;\; if $\beta>n/2$, then as
$\Vert\omega\Vert\rightarrow 0^+$,
\begin{align}\label{eq1112_7}
S_{2,\beta}(\omega)\sim
\frac{\Gamma\left(\beta-\frac{n}{2}\right)}{2^n\pi^{\frac{n}{2}}\Gamma(\beta)}
.
\end{align}

\vspace{0.2cm}\noindent $\bullet$\;\; if $\beta=n/2$, then as
$\Vert\omega\Vert\rightarrow 0^+$,
\begin{align}\label{eq1112_8}
S_{2,\beta}(\omega)\sim &
\frac{1}{2^{n-1}\pi^{\frac{n}{2}}\Gamma\left(
\frac{n}{2}\right)}\left\{- \ln \Vert\omega\Vert +\ln
2+\psi(1)\right\}
\\
=&\frac{1}{2^{n-1}\pi^{\frac{n}{2}}\Gamma\left(
\frac{n}{2}\right)}\left\{-\ln \Vert \omega\Vert +\ln
2-\gamma\right\}
\nonumber,
\end{align}where $\gamma$ is the Euler constant.

\vspace{0.2cm}\noindent $\bullet$\;\; if $\beta<n/2$, then as
$\Vert\omega\Vert\rightarrow 0^+$,
\begin{align}\label{eq1112_9}
S_{2,\beta}(\omega)\sim\frac{\Gamma\left(\frac{n}{2}-\beta\right)}{2^{2\beta}\pi^{\frac{n}{2}}
\Gamma(\beta)}\Vert\omega\Vert^{2\beta-n}.
\end{align}

\vspace{0.5cm}In fact, by considering the cases
$\beta-\frac{n}{2}\in \Z$ and $\beta-\frac{n}{2}\neq \Z$ separately
and substituting the series \eqref{eq1031_2} and \eqref{eq1031_3}
into \eqref{eq1031_4_2}, we can express the spectral density
$S_{2,\beta}(\omega)$ in terms of convergent power series in
$\Vert\omega\Vert$.

\begin{figure}\centering \epsfxsize=.8\linewidth  \epsffile{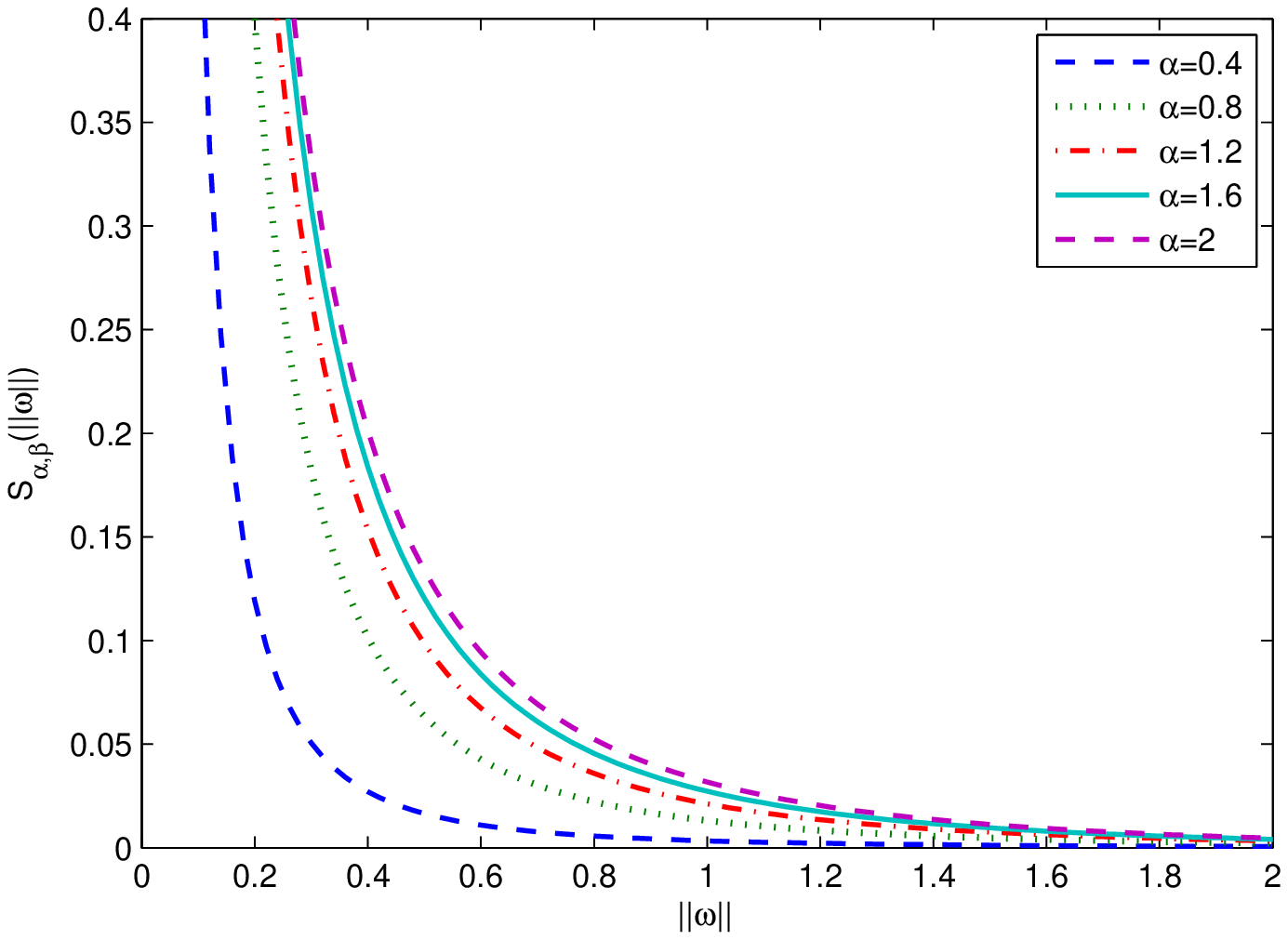} \centering
\epsfxsize=.8\linewidth \epsffile{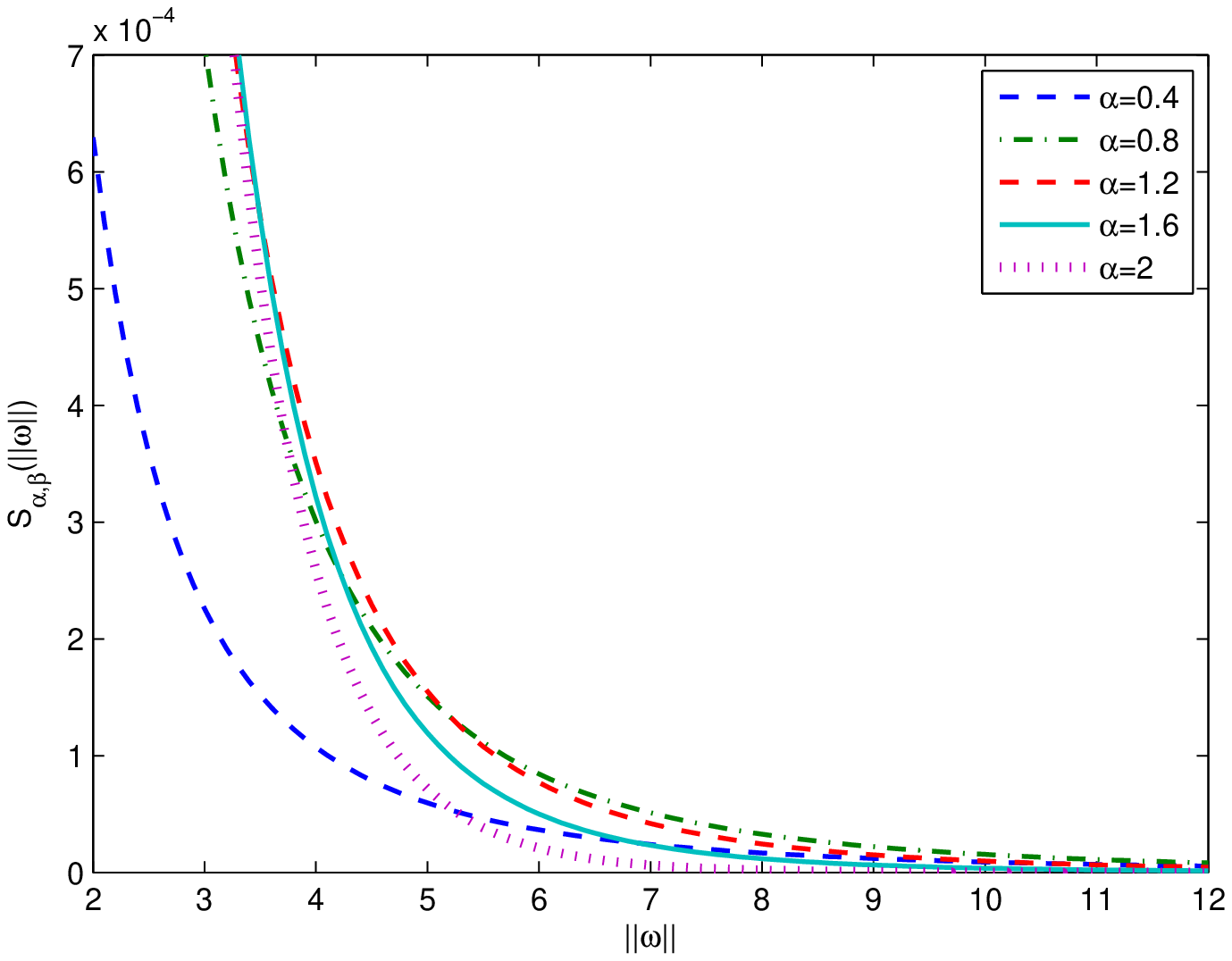}\caption{The spectral
density $S_{\alpha,\beta}(\Vert\omega\Vert)$ as a function of
$\Vert\omega\Vert$ when $n=3$ and $\alpha\beta=1.5$.}\end{figure}

\begin{figure}\centering \epsfxsize=.8\linewidth  \epsffile{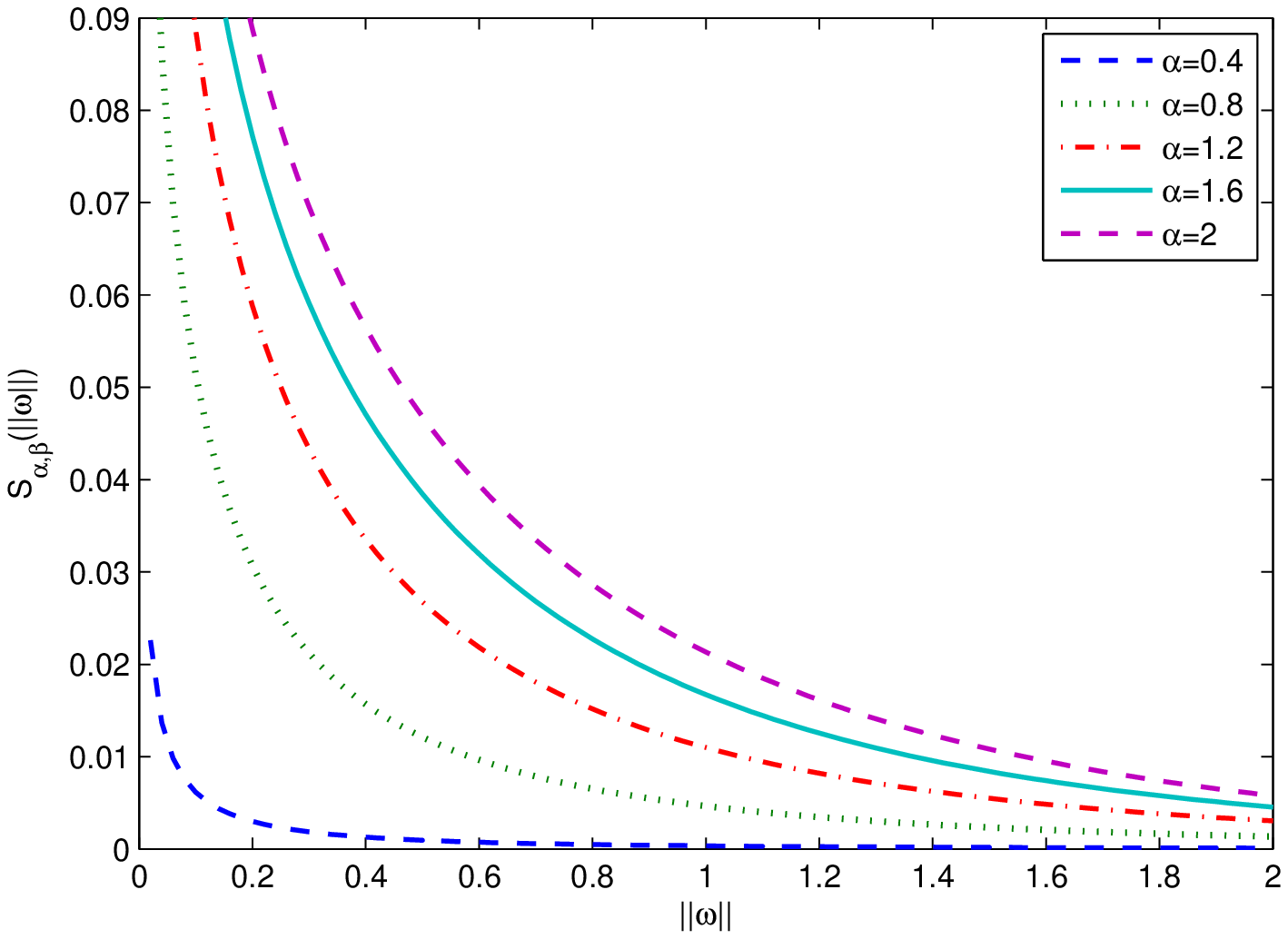} \centering
\epsfxsize=.8\linewidth \epsffile{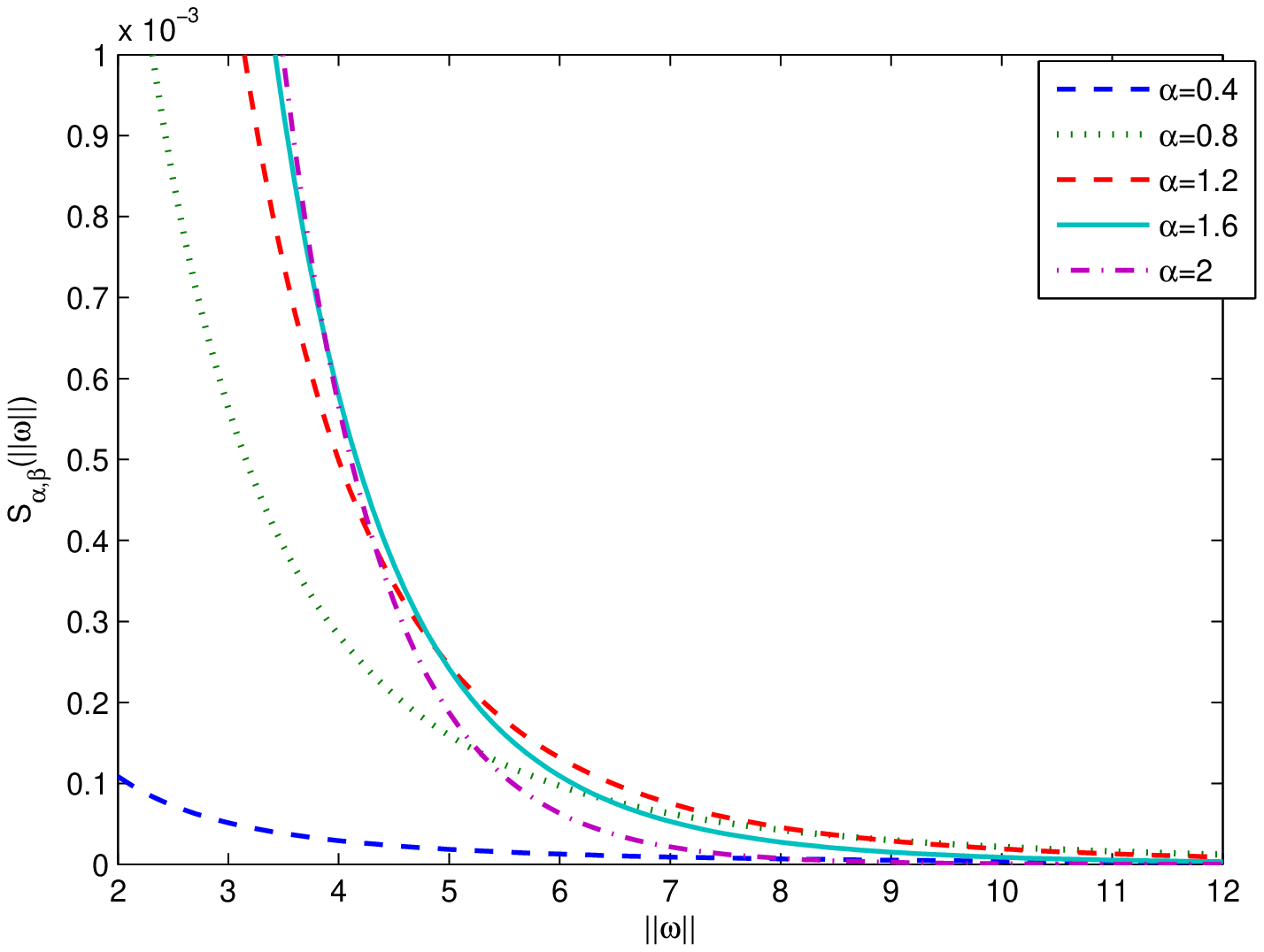}\caption{The spectral
density $S_{\alpha,\beta}(\Vert\omega\Vert)$ as a function of
$\Vert\omega\Vert$ when $n=3$ and $\alpha\beta=3$. }\end{figure}

\begin{figure}\centering \epsfxsize=.8\linewidth  \epsffile{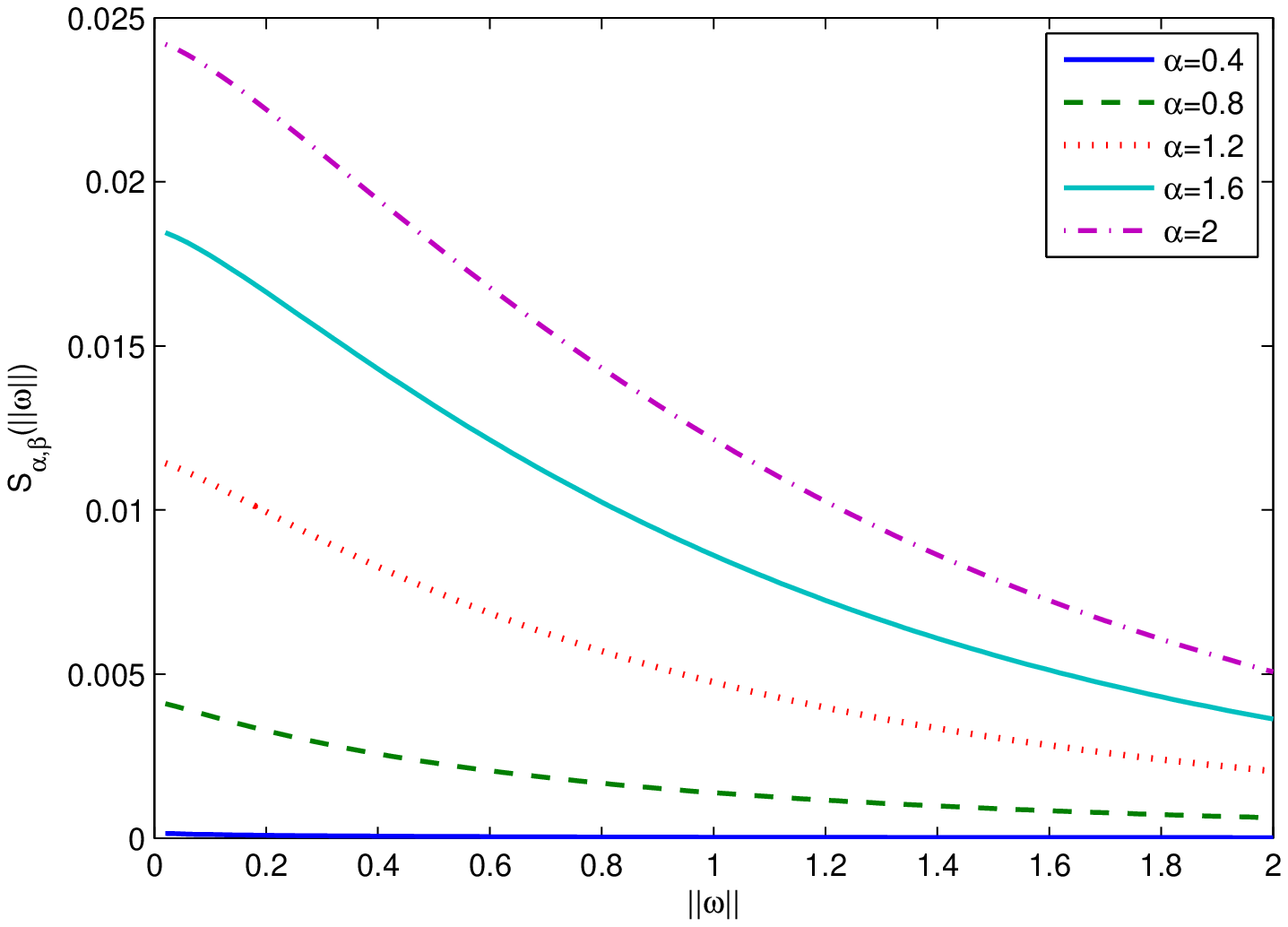} \centering
\epsfxsize=.8\linewidth \epsffile{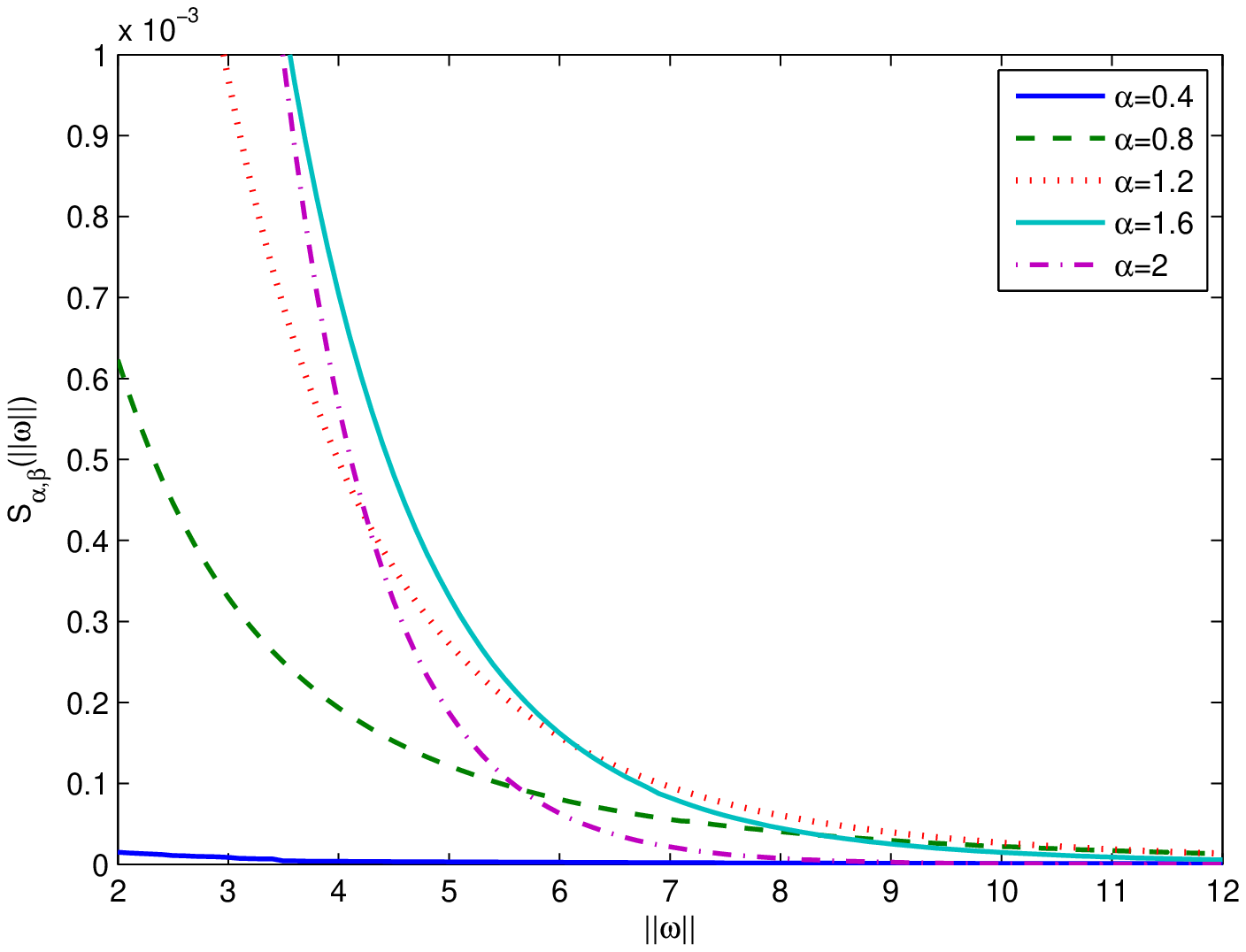}\caption{The spectral
density $S_{\alpha,\beta}(\Vert\omega\Vert)$ as a function of
$\Vert\omega\Vert$ when $n=3$ and $\alpha\beta=4.5$. $\alpha=0.4,
0.8,1.2, 1.6, 2$ for $S_1, S_2, S_3, S_4, S_5$
respectively.}\end{figure}

For general $\alpha\in (0,2)$,  the low frequency behavior of
$S_{\alpha,\beta}(\omega)$ depends on the arithmetic nature of
$\alpha$  and $\beta$. The method we are going to employ does not
allow the derivation of the whole asymptotic series as obtained by
Kotz et al \cite{Kotz_Ostrovskii_Hayfavi}, Erdogan and Ostrovskii
\cite{Ostrovskii, Erdogan_Ostrovskii}. We will only derive the
leading behavior of the spectral density $S_{\alpha,\beta}(\omega)$,
which only depends  on the algebraic conditions $\alpha\beta>n$,
$\alpha\beta=n$ or $\alpha\beta<n$.  These conditions are less
stringent than the arithmetic conditions considered in
\cite{Kotz_Ostrovskii_Hayfavi, Ostrovskii, Erdogan_Ostrovskii}.
However,  the simpler method employed here  provides the necessary
$S_{\alpha\beta}(\omega)$, $\Vert\omega\Vert\rightarrow 0^+$
asymptotic behaviors which are sufficient for most practical
purposes.

 When $\alpha\beta>n$, since (\cite{Gradshteyn_Ryzhik},
Eq. 8.402)
\begin{align}\label{eq1102_2}J_{\nu}(z)=\frac{z^{\nu}}{2^{\nu}\Gamma(\nu+1)}+O\left(z^{\nu+2}\right)
\hspace{1cm}\text{as}\; z\rightarrow 0,\end{align}we find from
\eqref{eq1031_7} that as $\Vert \omega\Vert\rightarrow 0^+$,
\begin{align}\label{eq1112_4}
S_{\alpha,\beta}(\omega)\sim\frac{1}{2^{n-1}\pi^{\frac{n}{2}}\Gamma\left(\frac{n}{2}\right)}\int_0^{\infty}
\frac{t^{n-1}}{(1+t^{\alpha})^{\beta}}dt=\frac{1}{2^{n-1}\pi^{\frac{n}{2}}\Gamma\left(\frac{n}{2}\right)}\frac
{\Gamma\left(\frac{n}{\alpha}\right)\Gamma\left(\beta-\frac{n}{\alpha}\right)}{\alpha\Gamma(\beta)},
\end{align}(\cite{Gradshteyn_Ryzhik}, no.11 of 3.251). When
$\alpha\beta=n$, we make a change of variable on \eqref{eq1031_7} to
get
\begin{align*}
S_{\alpha,\beta}(\omega)=\frac{1}{(2\pi)^{\frac{n}{2}}}\int_0^{\infty}
J_{\frac{n-2}{2}}(t)\frac{t^{\frac{n}{2}}}{(\Vert
\omega\Vert^{\alpha}+t^{\alpha})^{\beta}}dt.
\end{align*}In view of the leading behavior of the Bessel function
$J_{\nu}(z)$ as $z\rightarrow 0$ \eqref{eq1102_2}, we write
$S_{\alpha,\beta}(\omega)$ as the sum of two terms
$S_{\alpha,\beta}^1(\omega)$ and $S_{\alpha,\beta}^2(\omega)$ where
\begin{align*}
S_{\alpha,\beta}^1(\omega)=&\frac{1}{(2\pi)^{\frac{n}{2}}}\int_0^1\left(J_{\frac{n-2}{2}}(t)-
\frac{t^{\frac{n-2}{2}}}{2^{\frac{n-2}{2}}\Gamma\left(\frac{n}{2}\right)}\right)\frac{t^{\frac{n}{2}}}{
(\Vert\omega\Vert^{\alpha}+t^{\alpha})^{\beta}}dt\\
&+\frac{1}{(2\pi)^{\frac{n}{2}}}\int_1^{\infty}
J_{\frac{n-2}{2}}(t)\frac{t^{\frac{n}{2}}}{(\Vert
\omega\Vert^{\alpha}+t^{\alpha})^{\beta}}dt,
\end{align*}and
\begin{align*}
S_{\alpha,\beta}^2(\omega)=\frac{1}{2^{n-1}\pi^{\frac{n}{2}}\Gamma\left(\frac{n}{2}\right)}
\int_0^1\frac{t^{n-1}}{(\Vert\omega\Vert^{\alpha}+t^{\alpha})^{\beta}}dt.
\end{align*}As $\Vert \omega\Vert\rightarrow 0^+$,
\eqref{eq4_27_1} and \eqref{eq1102_2} show that
$S_{\alpha,\beta}^1(\omega)$ has a finite limit. By Lebesgue's
dominated convergence theorem, the limit is given by
$S_{\alpha,\beta}^1(0)$. Namely
\begin{align*}
S_{\alpha,\beta}^1(\omega)\xrightarrow {\Vert\omega\Vert\rightarrow
0^+}&S_{\alpha,\beta}^1(0)\\=&\frac{1}{(2\pi)^{\frac{n}{2}}}\int_0^1\left(J_{\frac{n-2}{2}}(t)-
\frac{t^{\frac{n-2}{2}}}{2^{\frac{n-2}{2}}\Gamma\left(\frac{n}{2}\right)}\right)t^{-\frac{n}{2}}dt
+\frac{1}{(2\pi)^{\frac{n}{2}}}\int_1^{\infty} J_{\frac{n-2}{2}}(t)
t^{-\frac{n}{2}}dt.
\end{align*}This expression can be evaluated using regularization method.
More precisely, using Lebesgue's dominated convergence theorem
again, we find that
\begin{align*}
S_{\alpha,\beta}^1(0)=&\lim_{\vep\rightarrow
0^+}\left\{\frac{1}{(2\pi)^{\frac{n}{2}}}\int_0^1\left(J_{\frac{n-2}{2}}(t)-
\frac{t^{\frac{n-2}{2}}}{2^{\frac{n-2}{2}}\Gamma\left(\frac{n}{2}\right)}\right)t^{-\frac{n}{2}+\vep}dt
+\frac{1}{(2\pi)^{\frac{n}{2}}}\int_1^{\infty} J_{\frac{n-2}{2}}(t)
t^{-\frac{n}{2}+\vep}dt\right\}
\\=&
\lim_{\vep\rightarrow
0^+}\left\{\frac{1}{(2\pi)^{\frac{n}{2}}}\int_0^{\infty}
J_{\frac{n-2}{2}}(t)t^{-\frac{n}{2}+\vep}dt
-\frac{1}{2^{n-1}\pi^{\frac{n}{2}}\Gamma\left(\frac{n}{2}\right)}\int_0^1t^{-1+\vep}dt\right\}.
\end{align*}The formula no.14 of 6.561 in  \cite{Gradshteyn_Ryzhik} then gives
\begin{align*}
S_{\alpha,\beta}^1(0)=&\lim_{\vep\rightarrow
0^+}\left\{\frac{2^{\vep}\Gamma\left(\frac{\vep}{2}\right)}{2^n\pi^{\frac{n}{2}}\Gamma\left(\frac{n-\vep}{2}\right)
}-\frac{1}{2^{n-1}\pi^{\frac{n}{2}}\Gamma\left(\frac{n}{2}\right)}\frac{1}{\vep}\right\}\\
=&\lim_{\vep\rightarrow
0^+}\frac{1}{2^{n-1}\pi^{\frac{n}{2}}\Gamma\left(\frac{n}{2}\right)}\frac{1}{\vep}\left\{(1+\vep\ln
2)\left(1+\frac{\vep}{2}\psi(1)\right)\left(1+\frac{\vep}{2}\psi\left(\frac{n}{2}\right)\right)-1\right\}\\
=&\frac{1}{2^{n}\pi^{\frac{n}{2}}\Gamma\left(\frac{n}{2}\right)}\left\{2\ln
2+\psi(1)+\psi\left(\frac{n}{2}\right) \right\}.
\end{align*}For the term $S_{\alpha,\beta}^2(\omega)$, we make a change of variable $u=t^{\alpha}$ or
equivalently $t=u^{\frac{\beta}{n}}$, to get
\begin{align*}
S_{\alpha,\beta}^2(\omega)=&\frac{\beta}{2^{n-1}\pi^{\frac{n}{2}}n\Gamma\left(\frac{n}{2}\right)}
\int_0^1\frac{u^{\beta-1}du}{(\Vert\omega\Vert^{\alpha}+u)^{\beta}}.\end{align*}We
split  $S_{\alpha,\beta}^2(\omega)$ again into a sum of two terms
$S_{\alpha,\beta}^3(\omega)$ and $S_{\alpha,\beta}^4(\omega)$,
where\begin{align*}S_{\alpha,\beta}^3(\omega)=&\frac{\beta}{2^{n-1}\pi^{\frac{n}{2}}n\Gamma\left(\frac{n}{2}\right)}
\int_0^1\frac{1}{(\Vert\omega\Vert^{\alpha}+u)}du\\=&
\frac{\beta}{2^{n-1}\pi^{\frac{n}{2}}n\Gamma\left(\frac{n}{2}\right)}\ln\frac{1+\Vert
\omega|\Vert^{\alpha}}{\Vert\omega\Vert^{\alpha}}\sim
\frac{1}{2^{n-1}\pi^{\frac{n}{2}}\Gamma\left(\frac{n}{2}\right)}\ln
\frac{1}{\Vert\omega\Vert}+O(\Vert\omega\Vert^{\alpha}),
\end{align*}and\begin{align*}S_{\alpha,\beta}^4(\omega)=
&\frac{\beta}{2^{n-1}\pi^{\frac{n}{2}}n\Gamma\left(\frac{n}{2}\right)}\int_0^1\frac{u^{\beta-1}-
(\Vert\omega\Vert^{\alpha}+u)^{\beta-1}}{(\Vert\omega\Vert^{\alpha}+u)^{\beta}}du\\
=&\frac{\beta}{2^{n-1}\pi^{\frac{n}{2}}n\Gamma\left(\frac{n}{2}\right)}
\int_0^{\frac{1}{\Vert\omega\Vert^{\alpha}}}\frac{u^{\beta-1}-(1+u)^{\beta-1}}{(1+u)^{\beta}}du.
\end{align*}When $\Vert\omega\Vert\rightarrow 0^+$,
\begin{align*}
S_{\alpha,\beta}^4(\omega)\sim
 \frac{\beta}{2^{n-1}\pi^{\frac{n}{2}}n\Gamma\left(\frac{n}{2}\right)}\int_0^{\infty}
 \frac{u^{\beta-1}-(1+u)^{\beta-1}}{(1+u)^{\beta}}du.
\end{align*}The integral is a convergent integral with value given
by (\cite{Gradshteyn_Ryzhik}, 3.219 page 316)
\begin{align*}
\int_0^{\infty}\frac{u^{\beta-1}-(1+u)^{\beta-1}}{(1+u)^{\beta}}du
=-\psi(\beta)-\gamma.
\end{align*} Putting everything together, we find  that
\begin{align*}
S_{\alpha,\beta}^2(\omega)\sim
\frac{1}{2^{n-1}\pi^{\frac{n}{2}}\Gamma\left(\frac{n}{2}\right)}\left\{
\ln\frac{1}{\Vert\omega\Vert}-\frac{\beta}{n}\left(\psi(\beta)+\gamma\right)\right\}\hspace{1cm}
\text{as}\;\;\Vert\omega\Vert\rightarrow 0^+.
\end{align*}Therefore, as $\Vert\omega\Vert \rightarrow 0^+$,
\begin{align}\label{eq1112_5}
S_{\alpha,\beta}(\omega)\sim\frac{1}{2^{n-1}\pi^{\frac{n}{2}}\Gamma\left(\frac{n}{2}\right)}\left\{
\ln\frac{1}{\Vert\omega\Vert}-\frac{\beta}{n}\left(\psi(\beta)+\gamma\right)+\ln
2-\frac{1}{2}\gamma+\frac{1}{2}\psi\left(\frac{n}{2}\right)\right\}.
\end{align}
When $\alpha\beta<n$, care has to be taken since \eqref{eq1031_7} is
defined only for $\alpha\beta>\frac{n-1}{2}$. Making a change of
variable, one finds that
\begin{align*}
S_{\alpha,\beta}(\omega)=
\frac{\Vert\omega\Vert^{\alpha\beta-n}}{(2\pi)^{\frac{n}{2}}}\int_0^{\infty}\frac{J_{\frac{n-2}{2}}(u)
u^{\frac{n}{2}}}{(\Vert\omega\Vert^{\alpha}+u^{\alpha})^{\beta}}du.
\end{align*}It is easy to
 verify that for any $\alpha>0$ and $\beta>0$,
 when $\Vert\omega\Vert\rightarrow 0^+$,
\begin{align*}
S_{\alpha,\beta}(\omega)\sim
\frac{\Vert\omega\Vert^{\alpha\beta-n}}{(2\pi)^{\frac{n}{2}}}\lim_{\vep\rightarrow
0^+}\lim_{\Vert\omega\Vert\rightarrow
0}\int_0^{\infty}\frac{J_{\frac{n-2}{2}}(u)e^{-\vep
u}u^{\frac{n}{2}}}{(\Vert\omega\Vert^{\alpha}+u^{\alpha})^{\beta}}du.
\end{align*}Now using no.1 of 6.621 in \cite{Gradshteyn_Ryzhik}, we have
\begin{align*}
\lim_{\Vert\omega\Vert\rightarrow
0}\int_0^{\infty}\frac{J_{\frac{n-2}{2}}(u)e^{-\vep
u}u^{\frac{n}{2}}}{(\Vert\omega\Vert^{\alpha}+u^{\alpha})^{\beta}}du=&\int_0^{\infty}
J_{\frac{n-2}{2}}(u)e^{-\vep u}u^{\frac{n}{2}-\alpha\beta}\\
=&\frac{1}{2^{\frac{n-2}{2}}}\frac{\Gamma(n-\alpha\beta)}{\sqrt{(\vep^2+1)^{n-\alpha\beta}}\Gamma\left(
\frac{n}{2}\right)}\,_2F_1\left(\frac{n-\alpha\beta}{2},
\frac{\alpha\beta-1}{2}; \frac{n}{2}; \frac{1}{1+\vep^2}\right),
\end{align*}where $_2F_1(a,b;c;z)$ is the hypergeometric function
\begin{align*}
_2F_1(a,b;c;z)=\sum_{j=0}^{\infty}\frac{(a)_j(b)_j}{(c)_j}\frac{z^j}{j!},\hspace{1cm}
(x)_j:=x(x+1)\ldots(x+j-1)=\frac{\Gamma(x+j)}{\Gamma(x)}.
\end{align*}Using the formula no.2 of 9.131 in \cite{Gradshteyn_Ryzhik}, we find
that
\begin{align*}
&\,_2F_1\left(\frac{n-\alpha\beta}{2}, \frac{\alpha\beta-1}{2};
\frac{n}{2};
\frac{1}{1+\vep^2}\right)=\frac{\Gamma\left(\frac{n}{2}\right)\Gamma\left(\frac{1}{2}\right)}
{\Gamma\left(\frac{\alpha\beta}{2}\right)\Gamma\left(\frac{n+1-\alpha\beta}{2}\right)}\,
_2F_1\left(\frac{n-\alpha\beta}{2}, \frac{\alpha\beta-1}{2};
\frac{1}{2};\frac{\vep^2}{1+\vep^2}\right)\\
&\hspace{3cm}+\left(\frac{\vep^2}{1+\vep^2}\right)^{\frac{1}{2}}\frac{\Gamma\left(\frac{n}{2}\right)\Gamma
\left(-\frac{1}{2}\right)}{\Gamma\left(\frac{n-\alpha\beta}{2}\right)
\Gamma\left(\frac{\alpha\beta-1}{2}\right)}\,_2F_1\left(\frac{\alpha\beta}{2},\frac{n-\alpha\beta+1}{2};
\frac{3}{2};\frac{\vep^2}{1+\vep^2}\right).
\end{align*}Since
\begin{align*}
\lim_{z\rightarrow 0}\,_2F_1(a,b;c;z)=1,
\end{align*}therefore,
\begin{align*}
\lim_{\vep\rightarrow 0^+}\lim_{\Vert\omega\Vert\rightarrow
0}\int_0^{\infty}\frac{J_{\frac{n-2}{2}}(u)e^{-\vep
u}u^{\frac{n}{2}}}{(\Vert\omega\Vert^{\alpha}+u^{\alpha})^{\beta}}du=&\frac{\sqrt{\pi}}
{2^{\frac{n-2}{2}}}\frac{\Gamma(n-\alpha\beta)}{\Gamma\left(\frac{\alpha\beta}{2}\right)\Gamma\left(
\frac{n+1-\alpha\beta}{2}\right)}
=2^{\frac{n}{2}-\alpha\beta}\frac{\Gamma\left(\frac{n-\alpha\beta}{2}\right)}
{\Gamma\left(\frac{\alpha\beta}{2}\right)},
\end{align*}where we have used the formula $\Gamma(2z)=2^{2z-1}\pi^{-\frac{1}{2}}\Gamma(z)
\Gamma(z+(1/2))$. This gives for $0<\alpha\beta<n$,
\begin{align}\label{eq1112_6}
S_{\alpha,\beta}(\omega)\sim\frac{\Vert\omega\Vert^{\alpha\beta-n}}{2^{\alpha\beta}\pi^{\frac{n}{2}}}
\frac{\Gamma\left(\frac{n-\alpha\beta}{2}\right)}
{\Gamma\left(\frac{\alpha\beta}{2}\right)}\hspace{1cm}\text{as}\;\Vert\omega\Vert\rightarrow
0^+.
\end{align}It is easy to verify that by putting $\alpha=2$ in \eqref{eq1112_4},
\eqref{eq1112_5} and \eqref{eq1112_6}, we get back \eqref{eq1112_7},
\eqref{eq1112_8} and \eqref{eq1112_9} respectively. The low
frequency behaviors of $S_{\alpha,\beta}(\omega)$ are summarized
below.
\begin{proposition}\label{p1113_2}For all $\alpha\in (0,2]$ and $\beta>0$, the
low frequency limit of the spectral density
$S_{\alpha,\beta}(\omega)$  is given by
\begin{align*}
S_{\alpha,\beta}(\omega)\sim&\frac{\Vert\omega\Vert^{\alpha\beta-n}}{2^{\alpha\beta}\pi^{\frac{n}{2}}}
\frac{\Gamma\left(\frac{n-\alpha\beta}{2}\right)}
{\Gamma\left(\frac{\alpha\beta}{2}\right)},\hspace{2cm}\text{if}\;\;\alpha\beta<n;\\
S_{\alpha,\beta}(\omega)\sim&\frac{1}{2^{n-1}\pi^{\frac{n}{2}}\Gamma\left(\frac{n}{2}\right)}\left\{
\ln\frac{1}{\Vert\omega\Vert}-\frac{\beta}{n}\left(\psi(\beta)+\gamma\right)+\ln
2-\frac{1}{2}\gamma+\frac{1}{2}\psi\left(\frac{n}{2}\right)\right\},\hspace{0.2cm}\text{if}\;\;\alpha\beta=n;\\
S_{\alpha,\beta}(\omega)\sim&\frac{1}{2^{n-1}\pi^{\frac{n}{2}}\Gamma\left(\frac{n}{2}\right)}\frac
{\Gamma\left(\frac{n}{\alpha}\right)\Gamma\left(\beta-\frac{n}{\alpha}\right)}{\alpha\Gamma(\beta)}
,\hspace{1cm}\text{if}\;\;\alpha\beta>n.
\end{align*}

\end{proposition}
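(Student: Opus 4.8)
The plan is to obtain Proposition~\ref{p1113_2} by collecting the three asymptotic estimates already established in the running discussion and then checking that the endpoint $\alpha=2$ is covered consistently, so that all three formulas hold uniformly for $\alpha\in(0,2]$. For $\alpha\in(0,2)$ the relevant estimates are \eqref{eq1112_4}, \eqref{eq1112_5} and \eqref{eq1112_6}; for $\alpha=2$ they will be read off from the closed form \eqref{eq1031_4_2}. The only genuinely new point beyond the preceding computations is the compatibility of these two derivations at $\alpha=2$.

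For $\alpha\in(0,2)$ I would treat the three regimes separately, working throughout from the Bessel-integral representation \eqref{eq1031_7}. When $\alpha\beta>n$ the integrand in \eqref{eq1031_7} is bounded by an integrable function uniformly in $\Vert\omega\Vert$, so dominated convergence together with the small-argument expansion \eqref{eq1102_2} of $J_\nu$ and the Beta-function evaluation of $\int_0^\infty t^{n-1}(1+t^\alpha)^{-\beta}\,dt$ yields the finite limit \eqref{eq1112_4}. When $\alpha\beta=n$, after rescaling I would split $S_{\alpha,\beta}$ into a part converging to $S_{\alpha,\beta}^1(0)$, which is evaluated by a Mellin-type $\varepsilon$-regularization (insert $t^\varepsilon$, apply the relevant entry of \cite{Gradshteyn_Ryzhik}, let $\varepsilon\to0^+$), and a part producing the logarithm, with the additive constant pinned down by the integral $\int_0^\infty (u^{\beta-1}-(1+u)^{\beta-1})(1+u)^{-\beta}\,du=-\psi(\beta)-\gamma$; this gives \eqref{eq1112_5}. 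When $0<\alpha\beta<n$, a rescaling of \eqref{eq1031_7} isolates the factor $\Vert\omega\Vert^{\alpha\beta-n}$, and the remaining a priori divergent integral is tamed by inserting an $e^{-\varepsilon u}$ regulator, evaluated as a ${}_2F_1$, transformed by the hypergeometric connection formula, and simplified with the Legendre duplication formula upon letting $\varepsilon\to0^+$; this is \eqref{eq1112_6}.

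For $\alpha=2$ I would instead use the closed form \eqref{eq1031_4_2} for $S_{2,\beta}$ together with the small-$z$ series \eqref{eq1031_2} and \eqref{eq1031_3} for $K_\nu$, distinguishing the cases $\beta-n/2\in\Z$ and $\beta-n/2\notin\Z$, which produce \eqref{eq1112_7}, \eqref{eq1112_8}, \eqref{eq1112_9}. Since these agree with the values obtained by formally setting $\alpha=2$ in \eqref{eq1112_4}, \eqref{eq1112_5}, \eqref{eq1112_6}, the three displayed asymptotics of the proposition are valid for all $\alpha\in(0,2]$, and the statement follows.

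The hard part is the regime $\alpha\beta\le n$. The representation \eqref{eq1031_7} was justified only for $\alpha\beta>(n-1)/2$, so for small $\alpha\beta$ one cannot simply pass to the limit inside the integral; the $e^{-\varepsilon u}$ factor is precisely what makes the rescaled integral absolutely convergent, and the delicate point is that one needs $\lim_{\varepsilon\to0^+}\lim_{\Vert\omega\Vert\to0^+}$ of the regularized integral to equal the claimed constant. Justifying this interchange uses the decay of $J_\nu$ from \eqref{eq4_27_1} together with its behaviour at the origin, and the borderline case $\alpha\beta=n$ requires the same care in cleanly separating the logarithmic divergence from the finite remainder.
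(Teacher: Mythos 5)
Your proposal is correct and follows essentially the same route as the paper: the paper likewise establishes \eqref{eq1112_4} by dominated convergence and the Beta integral, \eqref{eq1112_5} by the split into $S^1_{\alpha,\beta}$, $S^2_{\alpha,\beta}$ with the $\varepsilon$-regularization and the $-\psi(\beta)-\gamma$ integral, and \eqref{eq1112_6} via the $e^{-\varepsilon u}$ regulator, the ${}_2F_1$ evaluation, the connection formula and Legendre duplication, and then notes that setting $\alpha=2$ recovers \eqref{eq1112_7}--\eqref{eq1112_9} obtained from the closed form \eqref{eq1031_4_2}. Nothing essential is missing.
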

We see that if $0<\alpha\beta\leq n$, $S_{\alpha,\beta}(\omega)$ is
divergent at the origin and if $0<\alpha\beta<n$,
$S_{\alpha,\beta}(\omega)\sim \Vert \omega\Vert^{\alpha\beta-n}$ as
$\Vert\omega\Vert\rightarrow 0^+$. Note that the condition
$0<\alpha\beta\leq n$ agrees with the LRD condition. In fact, it is
a basic fact that for a stationary field with positive covariance,
it is LRD if and only if its spectral density diverges at the
origin. We would also like to point out that the low frequency limit
of the spectral density of the Gaussian field with powered exponent
covariance $\Xi_{\alpha,\beta}(t)$ \eqref{eq22_1} is finite as shown
by Garoni and Frankel \cite{Garoni_Frankel}, in agreement with the
fact that $\Xi_{\alpha,\beta}(t)$ is SRD.

We  remark that by considering
$$S_{\alpha,\beta}(\omega)-\text{"leading order term as}\;
\Vert\omega\Vert\rightarrow 0^+\text{"},$$ we can find the next
order term in the asymptotic expansion of $S_{\alpha,\beta}(\omega)$
when  $\Vert\omega\Vert\rightarrow 0^+$ using the same methods we
employed above. The results  depend on more complicated conditions
on $\alpha$ and $\beta$.

We also briefly remark that the asymptotic behavior of the spectral
density at low frequency is connected to the large time behavior of
the covariance function. For the covariance function
$C_{\alpha,\beta}(\tau)$ which satisfies \begin{align}\label{eq39}
C_{\alpha,\beta}(\tau)\sim L(\tau)\Vert\tau\Vert^{-\alpha\beta},
\hspace{1cm}\Vert\tau\Vert\rightarrow \infty,
\end{align}where $L(\tau)$ is a slowly
varying function for large $\Vert\tau\Vert$, i.e.
$L(c\tau)/L(\tau)\rightarrow 1$ as $\Vert\tau\Vert\rightarrow
\infty$ for all positive constant $c$,
Hardy--Littlewood--Karamata--Tauberian theorem (see e.g. \cite{Leo1,
Leo2}) implies that the spectral density $S_{\alpha,\beta}(\omega)$
has the following asymptotic behavior
\begin{align*}
S_{\alpha,\beta}(\omega)\sim c_{n,\alpha\beta}
\Vert\omega\Vert^{\alpha\beta-n}L\left(\frac{1}{\omega}\right),\hspace{1cm}\Vert\omega\Vert\rightarrow
0^+, \hspace{1cm}\text{if}\;\; \alpha\beta<n,
\end{align*}where
\begin{align*} c_{n,\alpha\beta}=\frac{1}{2^{\alpha\beta}\pi^{\frac{n}{2}}}
\frac{\Gamma\left(\frac{n-\alpha\beta}{2}\right)}
{\Gamma\left(\frac{\alpha\beta}{2}\right)}.
\end{align*}

 \section{Gaussian Sheet with Generalized Cauchy Covariance}
We can also introduce another type of $n$-dimensional process
$X^{\#}_{\alpha,\beta}(t)$, called Gaussian sheet with generalized
Cauchy covariance (GSGCC).

\begin{definition}The GSGCC is a centered Gaussian random field
$X^{\#}_{\alpha,\beta}(t)$ indexed by two multidimensional
parameters $\alpha=(\alpha_1,\ldots,\alpha_n)$ and
$\beta=(\beta_1,\ldots,\beta_n)$, with $\alpha_i\in (0,2]$,
$\beta_i>0$ for all $i=1,\ldots, n$, and with covariance   given by
\begin{align}\label{eq41}
C^{\#}_{\alpha,\beta}(\tau)=&\left\langle
X^{\#}_{\alpha,\beta}(t+\tau)X^{\#}_{\alpha,\beta}(t)\right\rangle \\
=&\prod_{i=1}^n C_{\alpha_i,\beta_i}(\tau_i)=\prod_{i=1}^n
\left(1+|\tau_i|^{\alpha_i}\right)^{-\beta_i},\nonumber
\end{align}where $C_{\alpha_i,\beta_i}(\tau_i)$  is the covariance of one-dimensional GFGCC  indexed
by $\alpha_i$ and $\beta_i$.\end{definition} Heuristically, one can
regard $X^{\#}_{\alpha,\beta}(t)$ as the product of $n$ independent
one-dimensional GFGCC processes $X_{\alpha_i,\beta_i}(t_i)$.
However, since the product of independent normal random variables is
not a normal random variable, we shall not write
$X^{\#}_{\alpha,\beta}(t)=\prod_{i=1}^n X_{\alpha_i,\beta_i}(t_i)$.
Nevertheless, it is true that for any integer $m\geq 1$,
$$E\left(\left[X^{\#}_{\alpha,\beta}(t)\right]^m\right)=\prod_{i=1}^n
E\left(\left[X_{\alpha_i,\beta_i}(t_i)\right]^m\right).$$

The GSGCC   $X^{\#}_{\alpha,\beta}(t)$ is an anisotropic Gaussian
random field and it
  does not satisfy the definition of local
self--similarity given in Definition \ref{def2}. However, we can
show that it is lass and has tangent field according to Definition
\ref{def1}. Assume that
$\alpha_1=\ldots=\alpha_{m_{\alpha}}<\alpha_{m_{\alpha}+1}\leq\ldots\leq
\alpha_n$. Namely, for some $m_{\alpha}\in\{1,\ldots, n\}$,
$\alpha_1=\alpha_2=\ldots=\alpha_{m_{\alpha}}=\min\alpha$, and for
all $i\geq m_{\alpha}+1$, $\alpha_i\gneq \min\alpha$. Here
$\min\alpha=\min\{\alpha_1,\ldots, \alpha_n\}.$

\begin{proposition}\label{p1026_2}GSGCC is lass with tangent field $T_{\alpha,\beta}(t)$, which is
a stationary centered Gaussian field with covariance
\begin{align*}
\left\langle T_{\alpha,\beta}(u)T_{\alpha,\beta}(v)\right\rangle
=\sum_{i=1}^{m_{\alpha}}
\beta_i\left(|u_i|^{\min\alpha}+|v_i|^{\min\alpha}-|u_i-v_i|^{\min
\alpha}\right).
\end{align*}\end{proposition}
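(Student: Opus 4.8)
The plan is to follow the argument of Proposition~\ref{p1}: compute the covariance of the rescaled increment field of $X^{\#}_{\alpha,\beta}$ and pass to the limit, the only new ingredient being a careful expansion of the product covariance $C^{\#}_{\alpha,\beta}$ near the origin. Since $X^{\#}_{\alpha,\beta}$ is a centered stationary Gaussian field with $C^{\#}_{\alpha,\beta}(0)=1$, its increment field $\Delta_{\tau}X^{\#}_{\alpha,\beta}(t):=X^{\#}_{\alpha,\beta}(t+\tau)-X^{\#}_{\alpha,\beta}(t)$ has covariance
\begin{align*}
\left\langle \Delta_{\rho}X^{\#}_{\alpha,\beta}(t)\,\Delta_{\sigma}X^{\#}_{\alpha,\beta}(t)\right\rangle
= C^{\#}_{\alpha,\beta}(\rho-\sigma)-C^{\#}_{\alpha,\beta}(\rho)-C^{\#}_{\alpha,\beta}(\sigma)+1,
\end{align*}
which does not depend on $t$, so the candidate tangent field will automatically be the same at every $t_0$.

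Next I would expand each factor: for $\tau_i\to 0$ one has $(1+|\tau_i|^{\alpha_i})^{-\beta_i}=1-\beta_i|\tau_i|^{\alpha_i}+O(|\tau_i|^{2\alpha_i})$. Writing $C^{\#}_{\alpha,\beta}(\tau)=\prod_{i=1}^{n}\bigl(1+h_i(\tau_i)\bigr)$ with $h_i(\tau_i)=-\beta_i|\tau_i|^{\alpha_i}+O(|\tau_i|^{2\alpha_i})$ and multiplying out, every product $h_{i_1}\cdots h_{i_k}$ with $k\geq 2$ is $O\bigl(|\tau_{i_1}|^{\alpha_{i_1}}\cdots|\tau_{i_k}|^{\alpha_{i_k}}\bigr)$, so
\begin{align*}
C^{\#}_{\alpha,\beta}(\tau)=1-\sum_{i=1}^{n}\beta_i|\tau_i|^{\alpha_i}+R(\tau),
\end{align*}
where each monomial in $R(\tau)$ carries either a factor $|\tau_i|^{2\alpha_i}$ or a product of at least two factors $|\tau_{i}|^{\alpha_{i}}$. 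Setting $\rho=\vep u$, $\sigma=\vep v$ and inserting this into the increment covariance, the quadratic part produces $\sum_{i=1}^{n}\beta_i\vep^{\alpha_i}\bigl(|u_i|^{\alpha_i}+|v_i|^{\alpha_i}-|u_i-v_i|^{\alpha_i}\bigr)$, while the remainder contributes only monomials of the form $\vep^{m}$ with $m\geq 2\min\alpha$. Dividing by $\vep^{\min\alpha}$ and letting $\vep\rightarrow 0^{+}$: the terms with $\alpha_i>\min\alpha$ carry a factor $\vep^{\alpha_i-\min\alpha}\rightarrow 0$, the remainder divided by $\vep^{\min\alpha}$ tends to $0$ because $\min\alpha>0$ gives $2\min\alpha>\min\alpha$, and only the indices $i=1,\ldots,m_{\alpha}$ with $\alpha_i=\min\alpha$ survive, leaving
\begin{align*}
\lim_{\vep\rightarrow 0^{+}}\frac{1}{\vep^{\min\alpha}}\left\langle \Delta_{\vep u}X^{\#}_{\alpha,\beta}(t)\,\Delta_{\vep v}X^{\#}_{\alpha,\beta}(t)\right\rangle
=\sum_{i=1}^{m_{\alpha}}\beta_i\bigl(|u_i|^{\min\alpha}+|v_i|^{\min\alpha}-|u_i-v_i|^{\min\alpha}\bigr).
\end{align*}

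To conclude, note that for each fixed $\vep$ the field $u\mapsto \vep^{-\min\alpha/2}\bigl(X^{\#}_{\alpha,\beta}(t_0+\vep u)-X^{\#}_{\alpha,\beta}(t_0)\bigr)$ is centered Gaussian, so the convergence of all its covariances established above (which is the $\kappa=\min\alpha/2$ rescaling of Definition~\ref{def1}) forces convergence of all finite-dimensional characteristic functions, hence convergence in the sense of finite-dimensional distributions, to the centered Gaussian field $T_{\alpha,\beta}$ with the displayed covariance; this field is nontrivial since its covariance is not identically zero, and --- up to the constants $\sqrt{2\beta_i}$ --- it is a sum of independent one-dimensional L\'evy fractional Brownian motions of index $\min\alpha/2$ in the coordinates $u_1,\ldots,u_{m_{\alpha}}$, exactly parallel to the tangent field in Proposition~\ref{p1}. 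I expect the only delicate point to be the bookkeeping that certifies $\min\alpha/2$ as the correct order --- small enough that the limiting covariance is non-degenerate, large enough that the cross-coordinate and quadratic pieces of $R$ vanish after rescaling --- together with, if one insists on full rigour, a version of the $O(\cdot)$ estimates that is uniform for $u,v$ in compact sets so that the termwise passage to the limit is justified.
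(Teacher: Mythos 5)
Your proposal is correct and follows essentially the same route as the paper: expand the product covariance $C^{\#}_{\alpha,\beta}(\tau)=1-\sum_{i=1}^{m_{\alpha}}\beta_i|\tau_i|^{\min\alpha}+O(\cdot)$ near the origin, write the increment covariance as $C^{\#}_{\alpha,\beta}(\vep(u-v))-C^{\#}_{\alpha,\beta}(\vep u)-C^{\#}_{\alpha,\beta}(\vep v)+1$, rescale by $\vep^{\min\alpha}$, and let $\vep\rightarrow 0^{+}$, so that only the coordinates with $\alpha_i=\min\alpha$ survive. Your closing remark that Gaussianity upgrades covariance convergence to convergence of finite-dimensional distributions is a correct (and slightly more explicit) version of what the paper leaves implicit.
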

\begin{proof}From definition, it is easy to see that\begin{align}\label{eq42}
C^{\#}_{\alpha,\beta}(\tau)=\prod_{i=1}^n
\left\{1-\beta_i|\tau_i|^{\alpha_i}\left[1+O|\tau_i|^{\alpha_i}\right]\right\}
\hspace{0.5cm}\text{as}\hspace{0.5cm}|\tau_i|\rightarrow 0^+,
\;i=1,\ldots,n.
 \end{align} Therefore,
\begin{align}\label{eq1022_1}
C^{\#}_{\alpha,\beta}(\tau)=1-\sum_{i=1}^{m_{\alpha}}\beta_i|\tau_i|^{\min\alpha}+O\left(\sum_{i=m_{\alpha}+1}^n
|\tau_i|^{\alpha_i}+\sum_{i=1}^{m_{\alpha}}|\tau_i|^{2\min\alpha}\right).
\end{align}Consequently,
\begin{align*}
&\lim_{\vep\rightarrow 0^+}\left\langle \frac{\Delta_{\vep
u}X^{\#}_{\alpha,\beta}(t)}{\vep^{\min\alpha/2}}\frac{\Delta_{\vep
v}X^{\#}_{\alpha,\beta}(t)}{\vep^{\min\alpha/2}}\right\rangle\\
=&\lim_{\vep\rightarrow
0^+}\frac{1}{\vep^{\min\alpha}}\Bigl\{C^{\#}_{\alpha,\beta}(\vep
(u-v))-C^{\#}_{\alpha,\beta}(\vep
u)-C^{\#}_{\alpha,\beta}(\vep v)+1\Bigr\}\\
=&\sum_{i=1}^{m_{\alpha}}
\beta_i\left(|u_i|^{\min\alpha}+|v_i|^{\min\alpha}-|u_i-v_i|^{\min
\alpha}\right).
\end{align*}

\end{proof}From this proposition we see that in general, the tangent field
$T_{\alpha,\beta}(u)$  of the GSGCC $X^{\#}_{\alpha,\beta}(t)$
defined by Definition \ref{def1} are uncorrelated in some of the
directions of $\R^n$. It doesn't fully capture the locally
self--similar property of $X^{\#}_{\alpha,\beta}(t)$. There is
another measure of local self--similarity, called local multiple
self-similarity that better describe this kind of process. Recall
that \cite{Genton_Perrin_Taqqu} a stochastic process $X(t)$ is
called multi-self--similar (mss) of index $H\in \R_+^n$ if and only
if for any $c\in \R_+^n$,
$$X\left(c_1t_1,\ldots, c_n t_n\right)=_d c_1^{H_1}\ldots
c_{n}^{H_n} X(t_1,\ldots, t_n).$$ It is obvious that a $H$-mss field
is a $\left[\sum_{i=1}^n H_i\right]$-ss field, but in general a
self--similar field is not multi--self--similar. The notion of lss
can be generalized accordingly.
\begin{definition}
A centered stationary Gaussian field  is $n$-ple locally
self--similar of order $\alpha/2$ if its covariance function
$C(\tau)$ satisfies for $\Vert \tau\Vert\rightarrow 0^+$,
\begin{align*}
C(\tau)=\prod_{i=1}^n
\left(A_i-B_i|\tau_i|^{\alpha_i}\left[1+O\left(|\tau_i|^{\delta_i}\right)\right]\right),
\end{align*}for some $A_i, B_i,\delta_i>0$, $1\leq i\leq n$.
\end{definition}
 In view of \eqref{eq42}, it is easy to see that $X^{\#}_{\alpha,\beta}(t)$ is
 $n$-ple locally self--similar with $A_i=1$, $B_i=\beta_i$ and
 $\delta_i=\alpha_i$.
We also note that the local multi-self--similarity is equivalent to
\begin{align}\label{eq43}
X^{\#}_{\alpha,\beta}(t+c\tau_ie_i)-X^{\#}_{\alpha,\beta}(t)=_d
c^{\alpha_i}\left[X^{\#}_{\alpha,\beta}(t+\tau_ie_i)-X^{\#}_{\alpha,\beta}(t)\right]\;\;\text{as}\;\;
\tau_i\rightarrow 0^+,
\end{align} for any $c\in \R^+$. Here $e_i$ is the unit vector in the $t_i$ direction.
This can also be rephrased as $n$-ple lass. \begin{proposition}
$X^{\#}_{\alpha,\beta}(t)$ is $n$-ple lass. More precisely, for
every $i=1,\ldots, n$,
\begin{align}\label{eq1019_3}
\lim_{\vep\rightarrow 0^+}\left\{\frac{X^{\#}_{\alpha,\beta}(t+\vep
u_ie_i)-X^{\#}_{\alpha,\beta}(t)}{\vep^{\alpha_i/2}_i}\right\}_{u_i\in\R}=\sqrt{2\beta}B_{\alpha_i/2}(u_i),
\end{align}where $B_{\alpha/2}(u)$ is the one dimensional
fractional Brownian motion of index $\alpha/2$.
\end{proposition}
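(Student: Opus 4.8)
The plan is to reduce the claim to the one-dimensional computation already performed in the proof of Proposition~\ref{p1}. Fix $i\in\{1,\dots,n\}$ and a base point $t\in\R^n$. Since every field occurring in \eqref{eq1019_3} is centered Gaussian, convergence in the sense of finite-dimensional distributions is equivalent to convergence of the covariance functionals, so I would only need to analyze
\[
\Gamma_\vep(u_i,v_i):=\left\langle \frac{X^{\#}_{\alpha,\beta}(t+\vep u_ie_i)-X^{\#}_{\alpha,\beta}(t)}{\vep^{\alpha_i/2}}\,\frac{X^{\#}_{\alpha,\beta}(t+\vep v_ie_i)-X^{\#}_{\alpha,\beta}(t)}{\vep^{\alpha_i/2}}\right\rangle
\]
and show that $\Gamma_\vep(u_i,v_i)\to 2\beta_i\langle B_{\alpha_i/2}(u_i)B_{\alpha_i/2}(v_i)\rangle$ as $\vep\to0^+$.

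Expanding the inner product bilinearly and using stationarity of $X^{\#}_{\alpha,\beta}$, one gets $\vep^{\alpha_i}\Gamma_\vep(u_i,v_i)=C^{\#}_{\alpha,\beta}(\vep(u_i-v_i)e_i)-C^{\#}_{\alpha,\beta}(\vep u_ie_i)-C^{\#}_{\alpha,\beta}(\vep v_ie_i)+1$. The crucial observation is that the displacement lies entirely along $e_i$: in the product \eqref{eq41} every factor with index $j\ne i$ is evaluated at argument $0$ and therefore equals $1$, so $C^{\#}_{\alpha,\beta}(\tau e_i)=C_{\alpha_i,\beta_i}(\tau)=(1+|\tau|^{\alpha_i})^{-\beta_i}$, which is exactly the covariance of the one-dimensional GFGCC with indices $\alpha_i,\beta_i$. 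Hence the problem collapses to the $n=1$ case, and I would then insert the small-argument expansion \eqref{eq2_2} (equivalently \eqref{eq42}) for each of the three terms, subtract, divide by $\vep^{\alpha_i}$, and obtain $\Gamma_\vep(u_i,v_i)=\beta_i(|u_i|^{\alpha_i}+|v_i|^{\alpha_i}-|u_i-v_i|^{\alpha_i})+O(\vep^{\alpha_i})$. Letting $\vep\to0^+$ kills the remainder, and comparing with the one-dimensional specialization of \eqref{eq4} identifies the limiting covariance as $2\beta_i\langle B_{\alpha_i/2}(u_i)B_{\alpha_i/2}(v_i)\rangle$.

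Finally, since a sequence of centered Gaussian vectors with convergent covariance matrices converges in distribution to the centered Gaussian vector with the limiting covariance, the tangent process in \eqref{eq1019_3} is $\sqrt{2\beta_i}\,B_{\alpha_i/2}(u_i)$ (so the $\sqrt{2\beta}$ in the statement should read $\sqrt{2\beta_i}$), in complete parallel with Proposition~\ref{p1}. I do not expect any real obstacle here; the only points needing a careful word are (i) recording that the off-axis factors of the product covariance \eqref{eq41} contribute trivially, which is precisely what legitimizes the reduction to one dimension, and (ii) checking that the remainder coming from \eqref{eq2_2}, after division by $\vep^{\alpha_i}$, is $O(\vep^{\alpha_i})$ uniformly for $(u_i,v_i)$ in compact sets, so that passing to the limit inside each finite-dimensional covariance evaluation is justified.
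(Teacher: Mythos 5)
Your proposal is correct and is essentially the paper's own argument: the paper simply states that the proof is the same as Proposition \ref{p1}, and your reduction — noting that for displacements along $e_i$ the factors $C_{\alpha_j,\beta_j}(0)=1$ for $j\neq i$ collapse the product covariance \eqref{eq41} to the one-dimensional $C_{\alpha_i,\beta_i}$, then repeating the increment-covariance expansion of Proposition \ref{p1} — is exactly how that intended reduction works, with the details made explicit. Your observation that the constant should read $\sqrt{2\beta_i}$ rather than $\sqrt{2\beta}$ is also a correct reading of what the statement means.
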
The proof is the same as Proposition \ref{p1}.

One should observe that the $n$-ple lass here is different from the
lass defined in Definition \ref{def1}. We take the limit process in
each of the $t_i$--direction separately and the limit process  can
be regarded as the partial derivative process of
$X^{\#}_{\alpha,\beta}(t)$ in the $t_i$-direction. A disadvantage of
this definition is that the limit process of an $n$-dimensional
field is a one-dimensional process.
 Therefore, we will
define another limit process that will better reflect local
multi-self--similarity, as were considered in \cite{Herbin} and
\cite{Ayache_Leger}. Given a stochastic process $X(t), t\in \R^n$,
we define its total increment by $u\in\R^n$ at the point $t\in\R^n$
as
\begin{align*}
\square_{u} X(t)=\sum_{\delta\in \{0,1\}^n} (-1)^{n-\sum_{i=1}^n
\delta_i}X\left(t+\sum_{i=1}^n \delta_i u_i e_i\right).
\end{align*}When $n=1$, this is the same as the increment, i.e.
$\square_u X(t)=\Delta_u X(t)$. When $n=2$, we have
\begin{align*}
\square_{(u_1, u_2)}X(t_1, t_2)=X(t_1+u_1, t_2+u_2)-X(t_1+u_1,
t_2)-X(t_1, t_2+u_2)+X(t_1, t_2),
\end{align*}which is also known as the rectangular increment. For general $n$, given $t, u\in \R^n$, the set of points
$$\left\{t+\sum_{i=1}^n \delta_i u_i
e_i\right\}_{\delta\in\{0,1\}^n}$$ are the  vertices of the
hyperrectangle with $t$ and $t+u$ as one of the main diagonals. By
giving the vertex $t+u$ a weight $+1$, the other vertices of the
hyperrectangle can be given a weight $+1$ and $-1$ alternatingly so
that adjacent vertices has different weights. $\square_u X(t)$ is
then the weighted sum of the field $X(t)$ at the vertices of the
hyperrectangle.

Recall that the fractional Brownian sheet $B_{\alpha/2}^{\#}(t),
t\in \R^n$ of index $\alpha/2\in (0,1)^n$ is a centered Gaussian
process with covariance
\begin{align*}
\left\langle B_{\alpha/2}^{\#}(t)B_{\alpha/2}^{\#}(s)\right\rangle
=\frac{1}{2^n}\prod_{i=1}^n
\left(|t_i|^{\alpha_i}+|s_i|^{\alpha_i}-|t_i-s_i|^{\alpha_i}\right).
\end{align*}It is well known that $B_{\alpha/2}^{\#}(t)$ is a
self--similar field of order $\sum_{i=1}^n \alpha_i/2$, and
multi--self--similar of order $\alpha/2$. However, unlike the
L$\acute{\text{e}}$vy fractional Brownian field \eqref{eq4}, the
fractional Brownian sheet is not a process with stationary
increment. Nevertheless, it is a process with stationary total
increment, i.e.,
\begin{align*}
\left\{ \square_u B_{\alpha/2}^{\#}(t),\; u\in\R^n\right\}=_d\left\{
\square_u B_{\alpha/2}^{\#}(s),\; u\in\R^n\right\}, \hspace{1cm}
\forall\; t,s\in\R^n.
\end{align*}

Now returning to the local asymptotic multi--self--similarity of
GSGCC, we can show the following:
\begin{proposition}\label{p1026_5}\begin{align*}
\lim_{\vep \rightarrow 0^+}\left\langle \frac{\square_{\vep.u}
X^{\#}_{\alpha,\beta}(t)}{\prod_{i=1}^n \vep_i^{\alpha_i/2}
}\right\rangle_{u\in\R^n}=_d \left[\prod_{i=1}^n
\sqrt{2\beta_i}\right] B_{\alpha/2}^{\#}(u).
\end{align*}Here $\vep.u=\sum_{i=1}^n \vep_i u_i e_i$.
\end{proposition}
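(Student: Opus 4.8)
The plan is to reduce the statement to a single covariance computation. Since $X^{\#}_{\alpha,\beta}$, the fractional Brownian sheet $B^{\#}_{\alpha/2}$, and all the rescaled total increments $\square_{\vep.u}X^{\#}_{\alpha,\beta}(t)/\prod_i\vep_i^{\alpha_i/2}$ are centered Gaussian, it suffices to show that the covariance of the rescaled total increment converges pointwise, as $\vep\to 0^+$, to $\left(\prod_{i=1}^n 2\beta_i\right)\left\langle B^{\#}_{\alpha/2}(u)B^{\#}_{\alpha/2}(v)\right\rangle$; pointwise convergence of the covariance matrices of centered Gaussian vectors is equivalent, via the L\'evy continuity theorem, to convergence of their finite--dimensional distributions. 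This is the same principle already used in the proof of Proposition \ref{p1}.

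The key step is a decoupling identity. For any stationary field $X$ with covariance $C$, expanding the definition of the total increment and using stationarity gives
\begin{align*}
\left\langle \square_u X(t)\,\square_v X(t)\right\rangle
=\sum_{\delta,\delta'\in\{0,1\}^n}\prod_{i=1}^n(-1)^{\delta_i+\delta_i'}\,
C\!\left(\sum_{j=1}^n(\delta_j u_j-\delta_j' v_j)e_j\right).
\end{align*}
Applying this to $X^{\#}_{\alpha,\beta}$ and inserting the product form $C^{\#}_{\alpha,\beta}(\tau)=\prod_{i=1}^n C_{\alpha_i,\beta_i}(\tau_i)$ from \eqref{eq41}, the double sum over $\delta,\delta'$ factorizes coordinate by coordinate, and since each $C_{\alpha_i,\beta_i}$ is even one recognizes in each factor the ordinary one--dimensional increment covariance:
\begin{align*}
\left\langle \square_u X^{\#}_{\alpha,\beta}(t)\,\square_v X^{\#}_{\alpha,\beta}(t)\right\rangle
=\prod_{i=1}^n\left\langle \Delta_{u_i}X_{\alpha_i,\beta_i}(t_i)\,\Delta_{v_i}X_{\alpha_i,\beta_i}(t_i)\right\rangle .
\end{align*}
This identity, obtained purely at the level of covariance functions, is what legitimately plays the role of the (false) statement $X^{\#}_{\alpha,\beta}=\prod_i X_{\alpha_i,\beta_i}$. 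Substituting $u\mapsto\vep.u$, $v\mapsto\vep.v$ and dividing by $\prod_{i=1}^n\vep_i^{\alpha_i}$ then turns the rescaled covariance into a product over $i$ of the quantities $\vep_i^{-\alpha_i}\left\langle \Delta_{\vep_i u_i}X_{\alpha_i,\beta_i}(t_i)\,\Delta_{\vep_i v_i}X_{\alpha_i,\beta_i}(t_i)\right\rangle$.

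Next I invoke the one--dimensional version of the computation in Proposition \ref{p1}: from the small--argument expansion \eqref{eq2_2} in dimension one, for $\rho,\sigma\to0^+$,
\begin{align*}
\left\langle \Delta_{\rho}X_{\alpha_i,\beta_i}(t_i)\,\Delta_{\sigma}X_{\alpha_i,\beta_i}(t_i)\right\rangle
=\beta_i\left(|\rho|^{\alpha_i}+|\sigma|^{\alpha_i}-|\rho-\sigma|^{\alpha_i}\right)
+O\!\left(|\rho|^{2\alpha_i},|\sigma|^{2\alpha_i}\right),
\end{align*}
so putting $\rho=\vep_i u_i$, $\sigma=\vep_i v_i$ the $i$--th factor equals $\beta_i\left(|u_i|^{\alpha_i}+|v_i|^{\alpha_i}-|u_i-v_i|^{\alpha_i}\right)+O(\vep_i^{\alpha_i})$, with the error uniform on compact sets of $(u_i,v_i)$. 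Each factor is thus bounded and convergent, so by continuity of multiplication (which dispatches the joint limit $\vep\to0^+$ without fuss),
\begin{align*}
\lim_{\vep\to0^+}\left\langle \frac{\square_{\vep.u}X^{\#}_{\alpha,\beta}(t)}{\prod_{i=1}^n\vep_i^{\alpha_i/2}}\,
\frac{\square_{\vep.v}X^{\#}_{\alpha,\beta}(t)}{\prod_{i=1}^n\vep_i^{\alpha_i/2}}\right\rangle
=\prod_{i=1}^n\beta_i\left(|u_i|^{\alpha_i}+|v_i|^{\alpha_i}-|u_i-v_i|^{\alpha_i}\right).
\end{align*}
Rewriting the right--hand side as $\left(\prod_{i=1}^n 2\beta_i\right)\cdot\frac{1}{2^n}\prod_{i=1}^n\left(|u_i|^{\alpha_i}+|v_i|^{\alpha_i}-|u_i-v_i|^{\alpha_i}\right)$ identifies it as $\left(\prod_{i=1}^n 2\beta_i\right)\left\langle B^{\#}_{\alpha/2}(u)B^{\#}_{\alpha/2}(v)\right\rangle$, which is precisely the covariance of $\left[\prod_{i=1}^n\sqrt{2\beta_i}\right]B^{\#}_{\alpha/2}$, and the reduction in the first paragraph finishes the argument.

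The only delicate point is the decoupling identity: one must be careful not to treat $X^{\#}_{\alpha,\beta}$ as a genuine product of independent one--dimensional fields (products of normals are not normal), but instead to carry out the factorization at the level of covariance functions, where the product structure of $C^{\#}_{\alpha,\beta}$ does all the work. A routine loose end is the case of degenerate coordinates: if some $u_i=0$ then $\square_{\vep.u}X^{\#}_{\alpha,\beta}(t)\equiv 0$, since the two vertices of the hyperrectangle differing only in the $i$--th slot coincide and carry opposite weights, and correspondingly the factor $|u_i|^{\alpha_i}+|v_i|^{\alpha_i}-|u_i-v_i|^{\alpha_i}$ on the right vanishes, so the identity holds trivially there.
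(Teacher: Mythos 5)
Your proposal is correct and follows essentially the same route as the paper: the paper's rigorous argument is exactly your decoupling identity, expanding $\left\langle \square_{\vep.u}X^{\#}_{\alpha,\beta}(t)\,\square_{\vep.v}X^{\#}_{\alpha,\beta}(t)\right\rangle$ over $\delta,\eta\in\{0,1\}^n$, factorizing via the product form of $C^{\#}_{\alpha,\beta}$ into $\prod_{i=1}^n\left\langle \Delta_{\vep_i u_i}X_{\alpha_i,\beta_i}(t_i)\,\Delta_{\vep_i v_i}X_{\alpha_i,\beta_i}(t_i)\right\rangle$, and then invoking the one--dimensional computation of Proposition \ref{p1}. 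Your added details (the Gaussian reduction to covariance convergence, the explicit $1/2^n$ bookkeeping, and the degenerate--coordinate remark) merely flesh out steps the paper leaves implicit.
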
\begin{proof}
If we heuristically write $X^{\#}_{\alpha,\beta}(t)$ as
$\prod_{i=1}^nX_{\alpha_i,\beta_i}(t_i)$, then heuristically the
total increment $\square_u X^{\#}_{\alpha,\beta}(t)$ can be written
as the product of increments $\prod_{i=1}^n
\Delta_{u_i}X_{\alpha_i,\beta_i}(t_i)$, and the result follows from
Proposition \ref{p1}.

For a more rigorous proof, notice that
\begin{align*}
\left\langle \square_{\vep.u} X^{\#}_{\alpha,\beta}(t)
\square_{\vep.v} X^{\#}_{\alpha,\beta}(t)\right\rangle
=&\sum_{\delta\in \{0,1\}^n}\sum_{\eta\in\{0,1\}^n}(-1)^{
\sum_{i=1}^n \delta_i+\sum_{i=1}^n \eta_i}
C^{\#}_{\alpha,\beta}\left(\sum_{i=1}^n\vep_i\left[\delta_i
u_i-\eta_i v_i\right]e_i\right)\\
=&\sum_{\delta\in \{0,1\}^n}\sum_{\eta\in\{0,1\}^n}(-1)^{
\sum_{i=1}^n \delta_i+\sum_{i=1}^n \eta_i} \prod_{i=1}^n
C_{\alpha_i,\beta_i}\left(\vep_i \left[\delta_i u_i-\eta_i
v_i\right]
\right)\\
=&\prod_{i=1}^n
\Bigl\{C_{\alpha_i,\beta_i}(0)-C_{\alpha_i,\beta_i}(\vep_i
u_i)-C_{\alpha_i,\beta_i}(\vep_i
v_i)+C_{\alpha_i,\beta_i}(\vep_i[u_i-v_i])\Bigr\}\\
=&\prod_{i=1}^n \left\langle \Delta_{\vep_i
u_i}X_{\alpha_i,\beta_i}(t_i)\Delta_{\vep_i
v_i}X_{\alpha_i,\beta_i}(t_i)\right\rangle.
\end{align*}The result follows.
\end{proof}From this proposition, we see that the total increment
process can capture the locally multi-self--similar property of the
GSGCC better. It also shows that locally, GSGCC behaves similarly as
the fractional Brownian sheet. One would  tend to use Proposition
\ref{p1026_2} and Proposition \ref{p1026_4} to conclude that the
Hausdorff dimension of the graph of  GSGCC over a hyperrectangle is
$$ d_{n,\alpha}:=n+1-\frac{1}{2}\min\{\alpha_1,\ldots,
\alpha_n\}.$$ However, Proposition \ref{p1026_4} cannot be applied
here since Proposition \ref{p1026_2} does not imply that the fractal
index of GSGCC is $\min\alpha$. For the fractional Brownian sheet
$B_{\alpha/2}^{\#}$, Kamont \cite{Kamont} showed that the Hausdorff
dimension of its graph   is bounded above by $d_{n,\alpha}$. In
\cite{Ayache}, Ayache used wavelet method to show that the Hausdorff
dimension of the graph of the fractional Brownian sheet
$B_{\alpha/2}^{\#}$ is indeed equal to $d_{n,\alpha}$. This fact was
proved again by Ayache and Xiao \cite{Ayache_Xiao}  as a special
case of a more general result on Hausdorff dimension of fractional
Brownian sheets from $\R^n$ to $\R^d$, using a different method.
Therefore, it is natural for us to conjecture that the Hausdorff
dimension of the graph of GSGCC is also $d_{n,\alpha}$. In fact,
\eqref{eq1022_1} implies that
\begin{align*}
\sigma_{X^{\#}_{\alpha,\beta}}^2(\tau) =\left\langle
\left[X^{\#}_{\alpha,\beta}(t+\tau)-X^{\#}_{\alpha,\beta}(t)\right]^2\right\rangle=O\left(\Vert
\tau\Vert^{\min\alpha}\right)\hspace{1cm}\text{as}\;\;
\Vert\tau\Vert\rightarrow 0^+.
\end{align*} By a well-known
theorem (see e.g. \cite{Adler}), this implies that the Hausdorff
dimension of the graph of GSGCC is bounded above by $d_{n,\alpha}$.
On the other hand, it is easy to show that
\begin{lemma}\label{lemma1122_1}
Let $\mathcal{C}=\prod_{i=1}^{n}[a_i, b_i]$ be a hyperrectangle in
$\R^n$. There exist constants $c_1$ and $c_2$ such that

\begin{align}\label{eq1122_1}
c_1\sum_{i=1}^n |t_i-s_i|^{\alpha_i} \leq \left\langle
\left[X^{\#}_{\alpha,\beta}(t)-X^{\#}_{\alpha,\beta}(s)\right]^2\right\rangle
\leq c_2\sum_{i=1}^n |t_i-s_i|^{\alpha_i},
\end{align}for all $t,s\in \mathcal{C}$.
\end{lemma}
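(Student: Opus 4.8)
The plan is to reduce the statement to an elementary two-sided linear estimate for $(1+x)^{-\beta}$ on a bounded interval. First I would use that $X^{\#}_{\alpha,\beta}$ is stationary with mean zero, so
\begin{align*}
\left\langle\left[X^{\#}_{\alpha,\beta}(t)-X^{\#}_{\alpha,\beta}(s)\right]^2\right\rangle
=2\bigl(C^{\#}_{\alpha,\beta}(0)-C^{\#}_{\alpha,\beta}(t-s)\bigr)
=2\left(1-\prod_{i=1}^n\bigl(1+|t_i-s_i|^{\alpha_i}\bigr)^{-\beta_i}\right),
\end{align*}
where I used $C^{\#}_{\alpha,\beta}(0)=1$. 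Writing $x_i=|t_i-s_i|^{\alpha_i}$ and $L_i=(b_i-a_i)^{\alpha_i}$, and assuming $a_i<b_i$ so that $L_i>0$ (a coordinate with $a_i=b_i$ contributes nothing and may be dropped), the lemma becomes the assertion that there are constants $c_1,c_2>0$, depending only on $n$ and the $\beta_i,L_i$, with
\begin{align*}
c_1\sum_{i=1}^n x_i\le 2\left(1-\prod_{i=1}^n(1+x_i)^{-\beta_i}\right)\le c_2\sum_{i=1}^n x_i
\qquad\text{for all }(x_1,\dots,x_n)\in\prod_{i=1}^n[0,L_i].
\end{align*}

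Second I would record the one-dimensional fact: for $\beta>0$ and $L>0$, the function $f(x)=1-(1+x)^{-\beta}$ has $f(0)=0$ and derivative $f'(x)=\beta(1+x)^{-\beta-1}$, which is continuous on $[0,L]$ and there satisfies $\beta(1+L)^{-\beta-1}\le f'(x)\le\beta$; hence by the mean value theorem
\begin{align*}
\beta(1+L)^{-\beta-1}\,x\le 1-(1+x)^{-\beta}\le\beta x,\qquad x\in[0,L].
\end{align*}
Third I would use the telescoping identity $1-\prod_{i=1}^n p_i=\sum_{k=1}^n\bigl(\prod_{i<k}p_i\bigr)(1-p_k)$ with $p_i=(1+x_i)^{-\beta_i}\in(0,1]$. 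Each partial product satisfies $m\le\prod_{i<k}p_i\le 1$, where $m:=\prod_{i=1}^n(1+L_i)^{-\beta_i}>0$, so combining with the one-dimensional bound gives
\begin{align*}
m\sum_{k=1}^n\beta_k(1+L_k)^{-\beta_k-1}x_k
&\le 1-\prod_{i=1}^n(1+x_i)^{-\beta_i}
\le\sum_{k=1}^n\beta_k x_k.
\end{align*}
Thus \eqref{eq1122_1} holds with $c_2=2\max_i\beta_i$ and $c_1=2m\min_i\beta_i(1+L_i)^{-\beta_i-1}$, after substituting back $x_i=|t_i-s_i|^{\alpha_i}$.

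There is no genuinely hard step; the only point requiring care is uniformity of the constants in $t,s\in\mathcal{C}$, which is precisely why one restricts to the compact box $\prod_i[0,L_i]$ and extracts the positive lower bounds $m$ for the partial products and $\beta_i(1+L_i)^{-\beta_i-1}$ for the slopes. One cannot hope for a two-sided linear bound valid for all $x_i\ge0$: the left-hand side of \eqref{eq1122_1} is bounded, while $\sum_i|t_i-s_i|^{\alpha_i}$ is not, so the lower estimate is intrinsically a local (bounded-domain) statement, exactly as in the hypothesis $t,s\in\mathcal{C}$.
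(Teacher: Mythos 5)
Your argument is correct, and there is nothing in the paper to compare it against: the authors state this lemma with only the remark ``it is easy to show that'' and give no proof, so your write-up supplies exactly the detail they omit. The reduction via stationarity to $2\bigl(1-\prod_i(1+x_i)^{-\beta_i}\bigr)$ with $x_i=|t_i-s_i|^{\alpha_i}\in[0,L_i]$ is right, the mean-value bound $\beta(1+L)^{-\beta-1}x\le 1-(1+x)^{-\beta}\le\beta x$ on $[0,L]$ is right, and the telescoping identity $1-\prod_{i}p_i=\sum_{k}\bigl(\prod_{i<k}p_i\bigr)(1-p_k)$ with the uniform bounds $m\le\prod_{i<k}p_i\le 1$ on the compact box yields explicit positive constants $c_1,c_2$, uniform over $\mathcal{C}$. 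Your closing observation is also the essential structural point: the two-sided bound (in particular the lower one, and the strict positivity of $c_1$) is intrinsically a bounded-domain statement, and positivity of $c_1$ is what the paper actually needs when it adapts the argument of Xiao to get the lower bound $d_{n,\alpha}$ on the Hausdorff dimension of the graph of GSGCC. The only cosmetic caveat is the degenerate case $a_i=b_i$, which you already handle by dropping such coordinates; otherwise the proof stands as a complete substitute for the paper's unproved assertion.
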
Then by adapting the proof in \cite{Xiao} for the lower
bound on the Hausdorff dimension of general anisotropic Gaussian
fields with stationary increments, it can be verified that the graph
of GSGCC over a hyperrectangle has Hausdorff dimension equal to
$d_{n,\alpha}$.

    The condition for LRD can be
generalized to random sheet and it becomes \begin{align}\label{eq46}
\int\limits_{\R_+^n}\left|
C^{\#}_{\alpha,\beta}(\tau)\right|d^n\tau=\infty.
\end{align}
Since $$C^{\#}_{\alpha,\beta}(\tau)=\prod_{i=1}^n
C_{\alpha_i,\beta_i}(\tau_i),$$ and Proposition \ref{p2} for $n=1$
gives
\begin{align*}
\int\limits_{\R^+}\left|C_{\alpha_i,\beta_i}(\tau_i)\right|d\tau_i
=\infty \Longleftrightarrow 0<\alpha_i\beta_i\leq 1,
\end{align*}we have
\begin{proposition}
GSGCC is LRD if and only if for some $1\leq i\leq n$,
$0<\alpha_i\beta_i\leq 1$; it is SRD if and only if
$\alpha_i\beta_i>1$ for all $1\leq i\leq n$.
\end{proposition}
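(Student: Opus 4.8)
The plan is to reduce the $n$-dimensional LRD criterion \eqref{eq46} to the one-dimensional result of Proposition \ref{p2} by exploiting the product structure of the covariance. Since $C^{\#}_{\alpha,\beta}(\tau)=\prod_{i=1}^n C_{\alpha_i,\beta_i}(\tau_i)$ with each factor $C_{\alpha_i,\beta_i}(\tau_i)=(1+|\tau_i|^{\alpha_i})^{-\beta_i}$ strictly positive and measurable, the integrand $\left|C^{\#}_{\alpha,\beta}(\tau)\right|=C^{\#}_{\alpha,\beta}(\tau)$ is a nonnegative product of single-variable functions. Hence, by Tonelli's theorem, the multiple integral in \eqref{eq46} may be written as an iterated integral, which factors as
\[
\int_{\R_+^n}\left|C^{\#}_{\alpha,\beta}(\tau)\right|d^n\tau
=\prod_{i=1}^{n}\int_{\R^+}\left|C_{\alpha_i,\beta_i}(\tau_i)\right|d\tau_i ,
\]
an identity valid in $(0,+\infty]$.

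Next I would invoke Proposition \ref{p2} in the case $n=1$: the $i$-th factor $I_i:=\int_{\R^+}\left|C_{\alpha_i,\beta_i}(\tau_i)\right|d\tau_i$ equals $+\infty$ precisely when $0<\alpha_i\beta_i\leq 1$, and is finite precisely when $\alpha_i\beta_i>1$. Moreover $I_i$ is strictly positive in either case, because the integrand is strictly positive on $\R^+$. Consequently the product $\prod_{i=1}^n I_i$ involves only factors lying in $(0,+\infty]$, so there is no indeterminate $0\cdot\infty$ to contend with, and a finite product of such numbers is infinite if and only if at least one factor is infinite.

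Combining the two steps, $\int_{\R_+^n}\left|C^{\#}_{\alpha,\beta}(\tau)\right|d^n\tau=\infty$ if and only if $0<\alpha_i\beta_i\leq 1$ for at least one index $i$; that is, GSGCC is LRD exactly under this condition. Negating, the integral is finite — i.e. GSGCC is SRD — if and only if $\alpha_i\beta_i>1$ for every $i=1,\ldots,n$, which is the second assertion.

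The argument is essentially bookkeeping once the product formula is in place, so there is no serious obstacle; the one point that must not be glossed over is the strict positivity of each $I_i$, which is what legitimizes the ``product is infinite iff some factor is infinite'' dichotomy and rules out a spurious finite value arising from a vanishing factor.
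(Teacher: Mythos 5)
Your proposal is correct and follows the same route as the paper: factor the integral in \eqref{eq46} over the product covariance and apply the $n=1$ case of Proposition \ref{p2} to each one-dimensional factor. The only difference is that you spell out the Tonelli factorization and the strict positivity of each factor explicitly, which the paper leaves implicit.
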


In order to determine the asymptotic behavior for the spectral
density of the generalized Cauchy sheet, we observe that the
spectral density of GSGCC, which we denote by
$S^{\#}_{\alpha,\beta}(\omega)$ can be expressed as products of
spectral densities of one--dimensional isotropic GFGCC considered in
section \ref{sec3}. Namely,
\begin{align}\label{eq1113_1}
S^{\#}_{\alpha,\beta}(\omega)=\prod_{i=1}^n
S_{\alpha_i,\beta_i}(\omega_i).
\end{align}
Therefore, the high frequency and low frequency behaviors of the
spectral density $S^{\#}_{\alpha,\beta}(\omega)$ can be obtained
from Proposition \ref{p1113_1} and Proposition \ref{p1113_2}
respectively.
\begin{proposition}
The high frequency limit of the spectral density
$S^{\#}_{\alpha,\beta}(\omega)$ is given by
\begin{align*}
S^{\#}_{\alpha,\beta}(\omega)\sim \prod_{i=1}^n
\mathcal{H}_i(\alpha_i,\beta_i;\omega_i),\hspace{1cm}\Vert\omega\Vert\rightarrow
\infty,
\end{align*}where
\begin{align*}
\mathcal{H}_i(\alpha_i,\beta_i;\omega_i)=\frac{ \beta_i}{\pi
}\Gamma\left(
 \alpha_i+1\right)\sin\frac{\pi\alpha_i}{2}|
\omega_i|^{-\alpha_i-1}, \hspace{1cm}\text{if}\;\; \alpha_i\in
(0,2);
\end{align*}and\begin{align*}
\mathcal{H}_i(\alpha_i,\beta_i;\omega_i)=
\frac{|\omega_i|^{\beta_i-1}}{2^{\beta_i}\Gamma(\beta_i)}e^{-|
\omega_i|},\hspace{2cm}\text{if}\;\;\alpha_i=2.\end{align*}
\end{proposition}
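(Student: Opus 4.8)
The plan is to reduce the multidimensional statement to the one-dimensional case already established in Proposition~\ref{p1113_1}. Since \eqref{eq1113_1} gives the factorization $S^{\#}_{\alpha,\beta}(\omega)=\prod_{i=1}^n S_{\alpha_i,\beta_i}(\omega_i)$, it suffices to analyze each factor separately as $|\omega_i|\rightarrow\infty$. The convergence $\Vert\omega\Vert\rightarrow\infty$ in the sense required for the product asymptotics is naturally interpreted coordinatewise (consistent with the convention $t\rightarrow\infty$ meaning $t_i\rightarrow\infty$ for all $i$ fixed earlier in the excerpt), so that the asymptotic behavior of the product is the product of the asymptotic behaviors of the factors.

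First I would treat the case $\alpha_i\in(0,2)$. By Proposition~\ref{p1113_1}, specifically the leading-order formula \eqref{eq1112_3} specialized to dimension $n=1$, we have
\begin{align*}
S_{\alpha_i,\beta_i}(\omega_i)\sim\frac{2^{\alpha_i}\beta_i}{\pi^{\frac{3}{2}}}\Gamma\left(\frac{\alpha_i+1}{2}\right)\Gamma\left(\frac{\alpha_i+2}{2}\right)\sin\frac{\pi\alpha_i}{2}|\omega_i|^{-\alpha_i-1}
\end{align*}
as $|\omega_i|\rightarrow\infty$. The only work here is to simplify the constant: using the Legendre duplication formula $\Gamma(2z)=2^{2z-1}\pi^{-1/2}\Gamma(z)\Gamma(z+\tfrac12)$ with $z=\frac{\alpha_i+1}{2}$, one gets $\Gamma\left(\frac{\alpha_i+1}{2}\right)\Gamma\left(\frac{\alpha_i+2}{2}\right)=2^{-\alpha_i}\sqrt{\pi}\,\Gamma(\alpha_i+1)$, which collapses the prefactor to exactly $\frac{\beta_i}{\pi}\Gamma(\alpha_i+1)\sin\frac{\pi\alpha_i}{2}$, matching the claimed $\mathcal{H}_i$.

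Next I would treat the case $\alpha_i=2$. Here Proposition~\ref{p1113_1} gives, in dimension $n=1$, the leading term
\begin{align*}
S_{2,\beta_i}(\omega_i)\sim\frac{|\omega_i|^{\beta_i-1}}{2^{\beta_i}\Gamma(\beta_i)}e^{-|\omega_i|}
\end{align*}
directly from the displayed leading term after \eqref{eq1112_11} with $n=1$; no further manipulation is needed, and this is exactly the second form of $\mathcal{H}_i$. Finally, I would multiply the $n$ single-coordinate asymptotics together to obtain $S^{\#}_{\alpha,\beta}(\omega)\sim\prod_{i=1}^n\mathcal{H}_i(\alpha_i,\beta_i;\omega_i)$. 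The main (and only mild) obstacle is bookkeeping the gamma-function constant in the $\alpha_i<2$ case via the duplication formula and being careful that the product of asymptotic equivalences is again an asymptotic equivalence — which holds here because each factor is eventually positive and bounded away from zero after dividing by its leading term, so no cross terms interfere.
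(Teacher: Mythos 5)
Your proof is correct and follows essentially the same route as the paper: factor $S^{\#}_{\alpha,\beta}(\omega)=\prod_{i=1}^n S_{\alpha_i,\beta_i}(\omega_i)$ via \eqref{eq1113_1}, apply the $n=1$ case of Proposition \ref{p1113_1} to each factor, and collapse the constant with the duplication formula $\Gamma(2z)=2^{2z-1}\pi^{-1/2}\Gamma(z)\Gamma(z+\tfrac12)$, which is exactly the simplification the paper invokes. Your computation of the prefactor $\frac{\beta_i}{\pi}\Gamma(\alpha_i+1)\sin\frac{\pi\alpha_i}{2}$ and of the $\alpha_i=2$ leading term is accurate, so nothing further is needed.
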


\begin{proposition}
The low frequency limit of the spectral density
$S^{\#}_{\alpha,\beta}(\omega)$ is given by
\begin{align*}
S^{\#}_{\alpha,\beta}(\omega)\sim \prod_{i=1}^n
\mathcal{L}_i(\alpha_i,\beta_i;\omega_i),\hspace{1cm}\Vert\omega\Vert\rightarrow
0^+,
\end{align*}where\begin{align*}
\mathcal{L}_i(\alpha_i,\beta_i;\omega_i)=&\frac{\Gamma(1-\alpha_i\beta_i)}{\pi}
\sin\frac{\pi\alpha_i
\beta_i}{2}|\omega|^{\alpha_i\beta_i-1},\hspace{1cm}\text{if}\;\;\alpha_i\beta_1<1;\\
\mathcal{L}_i(\alpha_i,\beta_i;\omega_i)=&\frac{1}{\pi}\left\{
\ln\frac{1}{|\omega_i|}-\beta_i\left(\psi(\beta_i)+\gamma\right)-\gamma\right\},\hspace{1cm}\text{if}\;\;\alpha_i\beta_i=1;\\
\mathcal{L}_i(\alpha_i,\beta_i;\omega_i)=&\frac{1}{\pi}\frac
{\Gamma\left(\frac{1}{\alpha_i}\right)\Gamma\left(\beta_i-\frac{1}{\alpha_i}\right)}{\alpha_i\Gamma(\beta_i)}
,\hspace{2.5cm}\text{if}\;\;\alpha_i\beta_i>1.
\end{align*}

\end{proposition}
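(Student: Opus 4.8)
The plan is to exploit the product structure \eqref{eq1113_1}, namely $S^{\#}_{\alpha,\beta}(\omega)=\prod_{i=1}^n S_{\alpha_i,\beta_i}(\omega_i)$, and to reduce the whole statement to the one-dimensional low-frequency asymptotics already recorded in Proposition \ref{p1113_2}. Since by the convention fixed in Section~2 the limit $\Vert\omega\Vert\to 0^+$ means $\omega_i\to 0^+$ for every $i$, each factor $S_{\alpha_i,\beta_i}(\omega_i)$ may be replaced by its own leading term as given by the $n=1$ instance of Proposition \ref{p1113_2}. Because the product is finite and, for each $i$, the ratio of $S_{\alpha_i,\beta_i}(\omega_i)$ to its leading term tends to $1$ as $\omega_i\to 0^+$, the product of these ratios tends to $1$; hence $S^{\#}_{\alpha,\beta}(\omega)\sim\prod_{i=1}^n \widetilde{\mathcal{L}}_i(\alpha_i,\beta_i;\omega_i)$, where $\widetilde{\mathcal{L}}_i$ denotes the $n=1$ leading term from Proposition \ref{p1113_2} in the appropriate one of the regimes $\alpha_i\beta_i<1$, $\alpha_i\beta_i=1$, $\alpha_i\beta_i>1$. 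Note that different indices $i$ may fall in different regimes; the product formula accommodates this automatically.

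It then remains only to verify that $\widetilde{\mathcal{L}}_i$ coincides with the $\mathcal{L}_i$ in the statement in each regime. For $\alpha_i\beta_i>1$ this is immediate on setting $n=1$ in the last line of Proposition \ref{p1113_2} and using $\Gamma(1/2)=\sqrt{\pi}$, giving $\frac{1}{\pi}\frac{\Gamma(1/\alpha_i)\Gamma(\beta_i-1/\alpha_i)}{\alpha_i\Gamma(\beta_i)}$. For $\alpha_i\beta_i<1$, setting $n=1$ yields $\widetilde{\mathcal{L}}_i=\frac{|\omega_i|^{\alpha_i\beta_i-1}}{2^{\alpha_i\beta_i}\sqrt{\pi}}\,\frac{\Gamma\!\left(\frac{1-\alpha_i\beta_i}{2}\right)}{\Gamma\!\left(\frac{\alpha_i\beta_i}{2}\right)}$; applying the Legendre duplication formula $\Gamma(2z)=2^{2z-1}\pi^{-1/2}\Gamma(z)\Gamma\!\left(z+\tfrac12\right)$ with $z=\frac{1-\alpha_i\beta_i}{2}$ to rewrite $\Gamma\!\left(\frac{1-\alpha_i\beta_i}{2}\right)$, and then the reflection formula $\Gamma(x)\Gamma(1-x)=\pi/\sin(\pi x)$ with $x=\frac{\alpha_i\beta_i}{2}$, collapses this to $\frac{\Gamma(1-\alpha_i\beta_i)}{\pi}\sin\frac{\pi\alpha_i\beta_i}{2}\,|\omega_i|^{\alpha_i\beta_i-1}$, as claimed. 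For $\alpha_i\beta_i=1$, setting $n=1$ gives prefactor $\frac{1}{\sqrt{\pi}\,\Gamma(1/2)}=\frac{1}{\pi}$ and bracket $\ln\frac{1}{|\omega_i|}-\beta_i\bigl(\psi(\beta_i)+\gamma\bigr)+\ln 2-\tfrac12\gamma+\tfrac12\psi\!\left(\tfrac12\right)$; the identity $\psi\!\left(\tfrac12\right)=-\gamma-2\ln 2$ makes $\ln 2-\tfrac12\gamma+\tfrac12\psi\!\left(\tfrac12\right)=-\gamma$, reproducing $\mathcal{L}_i$ in this case.

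I do not expect a genuine obstacle here: the argument is a direct consequence of \eqref{eq1113_1} together with the already-proved Proposition \ref{p1113_2}. The only point deserving explicit care is the passage from one-dimensional to $n$-dimensional asymptotics, i.e. justifying that a finite product of independent asymptotic equivalences is again an asymptotic equivalence even in the regimes where individual factors diverge (logarithmically when $\alpha_i\beta_i=1$, as a negative power of $|\omega_i|$ when $\alpha_i\beta_i<1$); this is clear since each factor's ratio to its claimed leading term tends to $1$. Everything else is the bookkeeping of the standard Gamma-function identities quoted above.
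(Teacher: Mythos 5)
Your proposal is correct and follows essentially the same route as the paper: it invokes the product formula \eqref{eq1113_1}, takes the $n=1$ low-frequency asymptotics from Proposition \ref{p1113_2} factor by factor, and simplifies with exactly the identities the paper cites (the duplication formula, the reflection formula, and $\psi(1/2)=-\gamma-2\ln 2$). The Gamma-function bookkeeping in all three regimes checks out, so nothing further is needed.
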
Here we have used the fact that
$\Gamma(2z)=2^{2z-1}\pi^{-\frac{1}{2}}\Gamma(z)\Gamma\left(z+\frac{1}{2}\right)$
(\cite{Gradshteyn_Ryzhik}, no.1 of 8.335),
$\Gamma(z)\Gamma(1-z)=\pi/\sin (\pi z)$ and $\psi(1/2)=-\gamma-2\ln
2$ (\cite{Gradshteyn_Ryzhik}, no.2 of 8.366).

\section{  Lamperti Transformation of GFGCC and GSGCC}   In his seminal paper \cite{Lamperti}, Lamperti introduced a
transformation which provides a one to one correspondence between a
self--similar process   and a stationary process. For a stationary
process $X(t)$, $t\in\R$, we let
\begin{align}\label{eq52}
Y(t)=t^HX(\ln t),
\end{align} for $t\in\R^+$, $H>0$, and $Y(0)=0$ be its $H$-Lamperti transfrom. Then
$Y(t)$ is an $H$-self--similar ($H$-ss) process. Conversely, if
$\{Y(t), t\geq 0\}$ is  $H$-ss with zero mean, then the inverse
Lamperti transformation of $Y(t)$ defined by
\begin{align}\label{eq53}
X(t)=e^{-Ht}Y(e^t), \hspace{0.5cm}t\in\R,
\end{align}
is a stationary process.

  There are two ways to define extensions of Lamperti transformation to $\R^n$ linking stationary random
field to a self-similar random field. Given a stationary random
field $X(t), t\in \R^n$, the first way to define its Lamperti
transformation is, given $H\in \R^+$, defined by
\begin{align}\label{eq1022_5}
Y(t)=\Vert t\Vert^{H}X(\ln t_1,\ldots, \ln t_n),
\end{align}for $t\in \R_+^n$. It is easy to show that
\begin{proposition}\label{p3}
Let $Y(t), t\in \R_+^n$ be the $H$-Lamperti transform of a
stationary  field $X(t), t\in\R^n$ defined by \eqref{eq1022_5}. Then
$Y(t)$ is a $H$-ss field.
\end{proposition}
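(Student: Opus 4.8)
The plan is to verify the $H$-self-similarity directly from the defining relation \eqref{eq1022_5}, checking that for any $c\in\R^+$ the finite-dimensional distributions of $\{Y(ct)\}$ coincide with those of $\{c^H Y(t)\}$. Since $Y$ is Gaussian (being obtained from the Gaussian field $X$ by a deterministic rescaling and a deterministic change of the index variable), it suffices to check equality of means and covariances. The mean is zero throughout because $X$ is centered, so only the covariance computation matters.

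First I would write out $\langle Y(ct)\,Y(cs)\rangle$ for $c\in\R^+$ and $t,s\in\R_+^n$. Using \eqref{eq1022_5},
\begin{align*}
\left\langle Y(ct)\,Y(cs)\right\rangle
=\Vert ct\Vert^{H}\Vert cs\Vert^{H}\left\langle X(\ln(ct_1),\ldots,\ln(ct_n))\,X(\ln(cs_1),\ldots,\ln(cs_n))\right\rangle.
\end{align*}
The key point is that $\ln(ct_i)=\ln c+\ln t_i$, so the argument of $X$ in the first factor is the argument appearing for $X(\ln t_1,\ldots,\ln t_n)$ shifted by the constant vector $(\ln c,\ldots,\ln c)$, and likewise for the second factor; both are shifted by the \emph{same} vector. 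Since $X$ is stationary, its covariance depends only on the difference of the arguments, which is $(\ln t_i-\ln s_i)_i$ and is unaffected by the common shift. Hence the covariance factor equals $\langle X(\ln t_1,\ldots,\ln t_n)\,X(\ln s_1,\ldots,\ln s_n)\rangle$. Pulling out $\Vert ct\Vert^{H}=c^{H}\Vert t\Vert^{H}$ and $\Vert cs\Vert^{H}=c^{H}\Vert s\Vert^{H}$ then gives
\begin{align*}
\left\langle Y(ct)\,Y(cs)\right\rangle
=c^{2H}\Vert t\Vert^{H}\Vert s\Vert^{H}\left\langle X(\ln t_1,\ldots,\ln t_n)\,X(\ln s_1,\ldots,\ln s_n)\right\rangle
=c^{2H}\left\langle Y(t)\,Y(s)\right\rangle,
\end{align*}
which is exactly the covariance of $\{c^{H}Y(t)\}$. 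Together with the vanishing of the means, and the fact that a centered Gaussian process is determined by its covariance, this yields $\{Y(ct)\}_{t\in\R_+^n}=_d\{c^H Y(t)\}_{t\in\R_+^n}$, i.e. $Y$ is $H$-ss.

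There is no real obstacle here; the only thing to be careful about is the bookkeeping of the logarithmic change of variables and the observation that stationarity of $X$ on all of $\R^n$ is what makes the common additive shift $(\ln c,\ldots,\ln c)$ harmless — this is precisely why the construction uses $\ln$ rather than some other reparametrization. One should also note in passing that the value $Y(0)$ is set to $0$ by convention (consistent with $c^H Y(0)=0$), and that the argument works verbatim when the indices are restricted to $\R_+^n$, which is the natural domain of $Y$. I would present the computation in the two displayed lines above and conclude.
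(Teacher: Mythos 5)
Your proof is correct, and its core observation is the same as the paper's: the logarithmic reparametrization turns the dilation $t\mapsto ct$ into a common additive shift by $(\ln c,\ldots,\ln c)$ of the argument of $X$, which stationarity renders harmless, while the prefactor contributes $c^H$ through $\Vert ct\Vert^H=c^H\Vert t\Vert^H$. The difference is in how this is implemented. The paper argues directly at the level of finite-dimensional distributions: $Y(ct)=\Vert ct\Vert^H X(\ln t_1+\ln c,\ldots,\ln t_n+\ln c)=_d c^H\Vert t\Vert^H X(\ln t_1,\ldots,\ln t_n)=c^H Y(t)$, invoking only strict stationarity of $X$; this proves the proposition exactly as stated, for an arbitrary stationary field, Gaussian or not. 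You instead reduce to a covariance computation and then appeal to the fact that a centered Gaussian field is determined by its covariance. That route is perfectly valid for the fields the paper actually applies the proposition to (GFGCC and GSGCC and their transforms are Gaussian), and it has the small advantage of making the covariance of $Y$ explicit, which the paper uses later anyway; but as written it silently adds the hypothesis that $X$ is Gaussian (or at least that its law is determined by second moments), which the statement of the proposition does not assume. If you want your argument to cover the general stationary case, replace the covariance step by the distributional identity $X(\ln t_1+\ln c,\ldots,\ln t_n+\ln c)=_d X(\ln t_1,\ldots,\ln t_n)$, applied simultaneously at finitely many points — which is precisely the paper's one-line proof.
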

\begin{proof}
For any $c\in \R^+$, we have
\begin{align*}
Y(ct)&=_d \Vert ct\Vert^H X\left(\ln [ct_1],\ldots, \ln
[ct_n]\right)\\
&=_dc^H\Vert t\Vert^H X(\ln t_1 +\ln c,\ldots, \ln t_n+\ln c)\\
&=_d c^H \Vert t\Vert^H X(\ln t_1,\ldots, \ln t_n)\\
&=_d c^H Y(t),
\end{align*}i.e. $Y(t)$ is $H$-ss.
\end{proof}
The inverse to this Lamperti transformation  transforms a random
field $Y(t)$ to
\begin{align}\label{eq1022_6}
X(t) =(e^{2t_1}+\ldots+e^{2t_n})^{-H/2} Y(e^{t_1}, \ldots, e^{t_n})
\end{align}for $t\in \R^n$. For a field $X(t),t\in\R^n$ to be
stationary, it must satisfy $X(t+u)=_d X(t)$ for all $u\in \R^n$.
This is an $n$-parameter family  of conditions. However, in proving
that $Y(t)$ is $H$-ss, we only use one parameter family of these
conditions, i.e., we only look at those $u$ of the form
$u_1=\ldots=u_n$. Therefore we cannot expect that the inverse
Lamperti transform \eqref{eq1022_6} of a $H$-ss field is a
stationary field. As an  example, consider the L$\acute{\text{e}}$vy
fractional Brownian field with index $H$, $B_{H}(t)$, which is a
$H$-ss field. Define its inverse $H$--Lamperti transform $Z(t)$ by
\eqref{eq1022_6}. We find that its covariance is
\begin{align*}
\Bigl\langle Z(t+\tau)Z(t)\Bigr\rangle
=&\frac{1}{2}\left(e^{2(t_1+\tau_1)}+\ldots
+e^{2(t_n+\tau_n)}\right)^{-H/2}\left(e^{2t_1 }+\ldots
+e^{2t_n}\right)^{-H/2}\times\\
&\Biggl\{ \left( e^{2(t_1+\tau_1)}+\ldots
+e^{2(t_n+\tau_n)}\right)^{H}+\left(e^{2t_1 }+\ldots
+e^{2t_n}\right)^{H}\\&-
\left([e^{t_1+\tau_1}-e^{t_1}]^2+\ldots+[e^{t_n+\tau_n}-e^{t_n}]^2\right)^{H}\Biggr\}.
\end{align*}It is easy to verify that for $n\geq 2$, this expression
is not independent of $t\in \R^n$. Therefore, $Z(t)$ is not a
stationary process.

To make full use of  the $n$-parameter families of symmetry in a
stationary process, there is another definition of Lamperti
transformation introduced by \cite{Genton_Perrin_Taqqu}. Given a
stationary process $X(t), t\in\R^n$, and a multi-index $H\in
\R_+^n$, the second $H$-Lamperti transform of $X(t)$, denoted by
$\mathbb{Y}(t)$, is defined as
\begin{align}\label{eq1022_8}
\mathbb{Y}(t)=t_1^{H_1}\ldots t_n^{H_n}X(\ln t_1,\ldots, \ln t_n)
\end{align}for $t\in \R_+^n$. It was shown in
\cite{Genton_Perrin_Taqqu} that
\begin{proposition}\label{p4}
Let $\mathbb{Y}(t), t\in \R_+^n$ be the $H$-Lamperti transform of a
stationary field $X(t), t\in\R^n$ defined by \eqref{eq1022_8}. Then
$\mathbb{Y}(t)$ is a $H$--mss field.
\end{proposition}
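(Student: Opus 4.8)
The plan is to mimic the proof of Proposition \ref{p3}, but now exploiting the full $n$-parameter translation invariance of the stationary field $X$ rather than just its diagonal subfamily. Fix an arbitrary $c=(c_1,\ldots,c_n)\in\R_+^n$. Starting from the definition \eqref{eq1022_8}, I would write
\begin{align*}
\mathbb{Y}(c_1t_1,\ldots,c_nt_n)&=(c_1t_1)^{H_1}\cdots(c_nt_n)^{H_n}\,X\bigl(\ln(c_1t_1),\ldots,\ln(c_nt_n)\bigr)\\
&=c_1^{H_1}\cdots c_n^{H_n}\,t_1^{H_1}\cdots t_n^{H_n}\,X\bigl(\ln t_1+\ln c_1,\ldots,\ln t_n+\ln c_n\bigr),
\end{align*}
using only the algebraic identities $(c_it_i)^{H_i}=c_i^{H_i}t_i^{H_i}$ and $\ln(c_it_i)=\ln t_i+\ln c_i$.

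The key step is then to invoke stationarity of $X$ with the translation vector $u=(\ln c_1,\ldots,\ln c_n)\in\R^n$: since $X$ is stationary, $\{X(s+u):s\in\R^n\}=_d\{X(s):s\in\R^n\}$. Composing with the deterministic, injective reparametrization $s_i=\ln t_i$ gives $\{X(\ln t_1+\ln c_1,\ldots,\ln t_n+\ln c_n):t\in\R_+^n\}=_d\{X(\ln t_1,\ldots,\ln t_n):t\in\R_+^n\}$, again in the sense of finite-dimensional distributions. Finally, multiplication of a process by the fixed deterministic scalar field $t\mapsto c_1^{H_1}\cdots c_n^{H_n}t_1^{H_1}\cdots t_n^{H_n}$ preserves equality of finite-dimensional distributions, so
\begin{align*}
\{\mathbb{Y}(c_1t_1,\ldots,c_nt_n):t\in\R_+^n\}=_d\{c_1^{H_1}\cdots c_n^{H_n}\,\mathbb{Y}(t):t\in\R_+^n\},
\end{align*}
which is precisely the $H$-mss property. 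I would also note that setting $c_1=\cdots=c_n$ recovers Proposition \ref{p3}, consistent with the remark that an $H$-mss field is $\left[\sum_{i=1}^n H_i\right]$-ss.

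There is no genuine obstacle here; the only point requiring a word of care is the bookkeeping behind the symbol $=_d$ — that the equalities above hold between the laws of the whole random fields as determined by their finite-dimensional marginals, and that both the logarithmic reparametrization of the index set and multiplication by a deterministic factor are operations under which such equality is stable. This is exactly the argument of \cite{Genton_Perrin_Taqqu}: because $X$ carries an $n$-parameter family of translation symmetries and the computation uses one symmetry per coordinate, nothing is lost, in contrast with the single-parameter argument behind the failure of stationarity of the inverse transform \eqref{eq1022_6}.
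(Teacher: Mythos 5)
Your proof is correct: the computation $\mathbb{Y}(c_1t_1,\ldots,c_nt_n)=c_1^{H_1}\cdots c_n^{H_n}\,t_1^{H_1}\cdots t_n^{H_n}\,X(\ln t_1+\ln c_1,\ldots,\ln t_n+\ln c_n)=_d c_1^{H_1}\cdots c_n^{H_n}\,\mathbb{Y}(t)$, justified by full $n$-parameter stationarity of $X$, is exactly the standard argument. The paper itself gives no proof here (it cites \cite{Genton_Perrin_Taqqu}), and your argument is the obvious extension of the paper's own proof of Proposition \ref{p3}, so this is essentially the same approach.
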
The inverse of this second Lamperti transformation
transform a field $\mathbb{Y}(t), t\in\R_+^n$ to $X(t), t\in\R^n$,
where
\begin{align}\label{eq1022_9}
X(t)=e^{-\sum_{i=1}^n t_i H_i}\mathbb{Y}(e^{t_1},\ldots, e^{t_n}).
\end{align}
It has a nice property, namely, as was shown in
\cite{Genton_Perrin_Taqqu}:
\begin{proposition}
Let $X(t), t\in \R^n$ be the  inverse  second $H$--Lamperti
transform  of a $H$-mss field $\mathbb{Y}(t), t\in\R_+^n$ defined by
\eqref{eq1022_9}. Then $X(t)$ is a stationary field.
\end{proposition}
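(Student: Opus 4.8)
The plan is to verify stationarity directly from the definition: I must show that for every fixed $u\in\R^n$ one has $X(t+u)=_d X(t)$ as processes in $t$, i.e. equality of all finite-dimensional distributions. So fix evaluation points $s^{(1)},\dots,s^{(k)}\in\R^n$ and a shift $u\in\R^n$. Expanding \eqref{eq1022_9} at the shifted points and separating the factor depending on $u$ gives
\[
X(s^{(j)}+u)=e^{-\sum_{i=1}^n(s^{(j)}_i+u_i)H_i}\,\mathbb{Y}\left(e^{s^{(j)}_1+u_1},\dots,e^{s^{(j)}_n+u_n}\right)=e^{-\sum_{i=1}^n u_iH_i}\,e^{-\sum_{i=1}^n s^{(j)}_iH_i}\,\mathbb{Y}\left(e^{u_1}e^{s^{(j)}_1},\dots,e^{u_n}e^{s^{(j)}_n}\right),
\]
for $j=1,\dots,k$. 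Note that $t_i\mapsto e^{t_i}$ is a bijection of $\R$ onto $\R_+$, so the arguments $(e^{s^{(j)}_1},\dots,e^{s^{(j)}_n})$ range over all of $\R_+^n$ without restriction, and $X$ inherits the centered (zero-mean) property from $\mathbb{Y}$.

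The key step is to invoke the $H$-mss property of $\mathbb{Y}$ with the single dilation vector $c=(e^{u_1},\dots,e^{u_n})\in\R_+^n$, applied jointly at the $k$ points $(e^{s^{(j)}_1},\dots,e^{s^{(j)}_n})$. Since multi-self-similarity is by definition an equality of finite-dimensional distributions, this yields
\[
\left(\mathbb{Y}(e^{u_1}e^{s^{(1)}_1},\dots,e^{u_n}e^{s^{(1)}_n}),\dots,\mathbb{Y}(e^{u_1}e^{s^{(k)}_1},\dots,e^{u_n}e^{s^{(k)}_n})\right)=_d e^{\sum_{i=1}^n u_iH_i}\left(\mathbb{Y}(e^{s^{(1)}_1},\dots,e^{s^{(1)}_n}),\dots,\mathbb{Y}(e^{s^{(k)}_1},\dots,e^{s^{(k)}_n})\right),
\]
with the same scalar $e^{\sum_i u_iH_i}$ multiplying every coordinate. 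Multiplying each coordinate through by the deterministic factors $e^{-\sum_i u_iH_i}$ and $e^{-\sum_i s^{(j)}_iH_i}$ — an operation that preserves equality in distribution — the $e^{-\sum_i u_iH_i}$ cancels the $e^{\sum_i u_iH_i}$ uniformly, and the right-hand side collapses to $(X(s^{(1)}),\dots,X(s^{(k)}))$. Hence $(X(s^{(1)}+u),\dots,X(s^{(k)}+u))=_d(X(s^{(1)}),\dots,X(s^{(k)}))$, which is exactly the assertion.

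There is no genuine obstacle here; the one point that must be handled with care is that the $H$-mss relation has to be applied with a dilation vector $c$ common to all the evaluation points, so that the emerging scalar prefactor is identical in every coordinate of the joint vector and therefore cancels against $e^{-\sum_i u_iH_i}$ uniformly. This is precisely the place where the full $n$-parameter family of symmetries of an mss field is needed, in contrast with the one-parameter argument that sufficed for the first Lamperti transform in Proposition~\ref{p3} (and which, as the $Z(t)$ example in the text shows, does not give stationarity of \eqref{eq1022_6}).
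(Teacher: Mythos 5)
Your proof is correct. The paper itself does not prove this proposition --- it quotes the result from Genton, Perrin and Taqqu \cite{Genton_Perrin_Taqqu} without argument --- so there is no in-text proof to compare against; your direct verification through finite-dimensional distributions is the natural argument and fills that gap. The two points that require care are both handled: you apply the $H$-mss relation jointly at all $k$ evaluation points with the \emph{single} dilation vector $c=(e^{u_1},\dots,e^{u_n})$, so the scalar $e^{\sum_{i=1}^n u_iH_i}$ is common to every coordinate of the joint vector, and you then multiply coordinatewise by the deterministic factors $e^{-\sum_i u_iH_i}e^{-\sum_i s^{(j)}_iH_i}$, an operation that preserves equality in distribution; the prefactor cancels uniformly and stationarity follows. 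Your closing remark also correctly identifies why the analogous statement fails for the first (isotropic) inverse transform \eqref{eq1022_6}: there only the one-parameter family of dilations $c_1=\dots=c_n$ is exploited, which is exactly the point the paper illustrates with its $Z(t)$ counterexample built from the L\'evy fractional Brownian field.
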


As a side remark, it can be shown that \eqref{eq1022_8} is
essentially the unique (up to some multiplicative constants)
transformation that takes a stationary process to a $H$-mss.
However, if for $c\in \R_+^n, \beta\in\R^+$, $f_{c,\beta}(t), t\in
\R_+^n$ is a function of the form
\begin{align*}
f_{c,\beta}(t)=\sum_{i=1}^n c_i t_i^{\beta},
\end{align*}then if $c[1],\ldots,c[m]\in \R_+^n$, $\beta_1,\ldots,\beta_m\in \R^+$ and
 $\gamma_1,\ldots,\gamma_m\in \R^+$ are such that $\beta_1\gamma_1+\ldots+\beta_m\gamma_m=H$, it is easy to verify that
the transform
$$X(t)\mapsto \prod_{i=1}^m f_{c[i],\beta_i}(t)^{\gamma_i}X(\ln t_1,\ldots, \ln t_n)$$ takes a stationary
process to a $H$--ss process, but its inverse in general does not
take a $H$--ss process to a stationary process. The first Lamperti
transformation \eqref{eq1022_5} corresponds to $m=1$,
$c[1]_1=\ldots=c[1]_n=1$, $\beta_1=2$ and $\gamma_1=H/2$.

Returning to the GFGCC $X_{\alpha,\beta}(t)$ and the GSGCC
$X^{\#}_{\alpha,\beta}(t)$, Proposition \ref{p3} shows that their
first Lamperti transforms, $Y_{\alpha,\beta}(t)$ and
$Y^{\#}_{\alpha,\beta} (t)$, are $H$--ss fields with covariances
\begin{align*}
\Bigl\langle Y_{\alpha,\beta}(t)Y_{\alpha,\beta}(s)\Bigr\rangle=&
\Vert t\Vert^{H}\Vert s\Vert^H \Bigl\langle X_{\alpha,\beta}(\ln
t_1,\ldots, \ln t_n)X_{\alpha,\beta}(\ln s_1,\ldots,\ln
s_n)\Bigr\rangle\\
=&\Vert t\Vert^{H}\Vert s\Vert^H\left[1+\left(\sum_{i=1}^n (\ln
t_i-\ln s_i)^2\right)^{\alpha/2}\right]^{-\beta},
\end{align*}and \begin{align*}
\Bigl\langle Y^{\#}_{\alpha,\beta} (t)Y^{\#}_{\alpha,\beta}
(s)\Bigr\rangle=&\Vert t\Vert^{H}\Vert s\Vert^H\prod_{i=1}^n
\left[1+ (\ln t_i-\ln s_i)^{\alpha_i}\right]^{-\beta_i}
\end{align*}respectively. On the other hand, Proposition  \ref{p4} shows
that their second Lamperti transforms,
$\mathbb{Y}_{\alpha,\beta}(t)$ and
$\mathbb{Y}^{\#}_{\alpha,\beta}(t)$, are $H$--mss fields with
covariances
\begin{align*}
\Bigl\langle
\mathbb{Y}_{\alpha,\beta}(t)\mathbb{Y}_{\alpha,\beta}(s)\Bigr\rangle
=&\prod_{i=1}^n t_i^{H_i}\prod_{i=1}^n
s_i^{H_i}\left[1+\left(\sum_{i=1}^n (\ln t_i-\ln
s_i)^2\right)^{\alpha/2}\right]^{-\beta},
\end{align*}and \begin{align*}
\Bigl\langle
\mathbb{Y}^{\#}_{\alpha,\beta}(t)\mathbb{Y}^{\#}_{\alpha,\beta}(s)\Bigr\rangle
=& \prod_{i=1}^n t_i^{H_i}s_i^{H_i}\left[1+ (\ln t_i-\ln
s_i)^{\alpha_i}\right]^{-\beta_i}
\end{align*}respectively.

Next we consider whether the LRD property of GFGCC and GSGCC is
preserved
  under the Lamperti transformations. First we note that the LRD condition \eqref{eq7} can be extended to
non-stationary field in the following way. \begin{definition}Let
\begin{align}\label{eq59}
R_X(t, t+\tau)=\frac{C(t,t+\tau)}{\sqrt{C(t+\tau,t+\tau)C(t,t)}}
\end{align} be the correlation function of a non-stationary Gaussian
field $X(t)$. Then the condition of LRD for the non-stationary
Gaussian field $X(t)$ is   given by \begin{align}\label{eq60}
\int_{\R_+^n} \left|R_X(t, t+\tau)\right|d^n\tau=\infty.
\end{align}
\end{definition} \begin{proposition}~\\
A.   The $H$-ss Gaussian field $Y_{\alpha,\beta}(t)$ is LRD for all $\alpha\in (0,2]$ and $\beta>0$.\\
B.  The $H$-ss Gaussian field $Y^{\#}_{\alpha,\beta} (t)$ is LRD for
all $\alpha, \beta \in \R_+^n$ satisfying $\alpha_i\in (0,2]$
and $\beta_i>0$, $1\leq i\leq n$.\\
C. The $H$-mss Gaussian field $\mathbb{Y}_{\alpha,\beta}(t)$ is LRD for all $\alpha\in (0,2]$ and $\beta>0$.\\
D.  The $H$-mss Gaussian field $\mathbb{Y}^{\#}_{\alpha,\beta}(t)$
is LRD for all $\alpha, \beta \in \R_+^n$ satisfying $\alpha_i\in
(0,2]$
and $\beta_i>0$, $1\leq i\leq n$.\\
\end{proposition}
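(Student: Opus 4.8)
The plan is to compute, in each of the four cases, the correlation function $R(t,t+\tau)$ explicitly from the covariance formulas already displayed, and then show that $\int_{\R_+^n}|R(t,t+\tau)|\,d^n\tau$ diverges by exhibiting a lower bound whose integral is already known to diverge. The crucial observation is that the self--similarity weights $\Vert t\Vert^H$ (respectively $\prod t_i^{H_i}$) appearing in the covariances cancel completely in the correlation function \eqref{eq59}: for instance, for $Y_{\alpha,\beta}$ one gets
\begin{align*}
R_{Y_{\alpha,\beta}}(t,t+\tau)=\frac{\left[1+\left(\sum_{i=1}^n(\ln(t_i+\tau_i)-\ln t_i)^2\right)^{\alpha/2}\right]^{-\beta}}
{\left[1+\left(\sum_{i=1}^n(\ln(t_i+\tau_i)-\ln t_i)^2\right)^{\alpha/2}\right]^{-\beta/2}\cdot 1}
=\left[1+\left(\sum_{i=1}^n\left(\ln\tfrac{t_i+\tau_i}{t_i}\right)^2\right)^{\alpha/2}\right]^{-\beta},
\end{align*}
since $C(t,t)=(1+0)^{-\beta}=1$, and similarly the $H$-weights cancel in cases B, C, D. So after cancellation the correlation of $Y_{\alpha,\beta}$ depends on $t,\tau$ only through the quantities $\ln\frac{t_i+\tau_i}{t_i}$, and likewise $R_{Y^{\#}_{\alpha,\beta}}(t,t+\tau)=\prod_{i=1}^n\left(1+\bigl|\ln\frac{t_i+\tau_i}{t_i}\bigr|^{\alpha_i}\right)^{-\beta_i}$, with the same formulas for the mss transforms.

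Next I would fix $t$ once and for all (the LRD condition \eqref{eq60} asks for divergence at a given $t$; any convenient $t$, e.g. $t_i=1$ for all $i$, suffices) and change variables $\tau_i\mapsto s_i=\ln\frac{t_i+\tau_i}{t_i}$, i.e. $\tau_i=t_i(e^{s_i}-1)$, with $d\tau_i=t_i e^{s_i}\,ds_i$ and $s_i$ ranging over $(0,\infty)$ as $\tau_i$ does. For case A this turns $\int_{\R_+^n}|R|\,d^n\tau$ into
\begin{align*}
\left(\prod_{i=1}^n t_i\right)\int_{\R_+^n}\frac{e^{s_1+\cdots+s_n}}{\left(1+\Vert s\Vert^{\alpha}\right)^{\beta}}\,d^ns,
\end{align*}
and since $e^{s_1+\cdots+s_n}\ge 1$ on $\R_+^n$, this dominates $\bigl(\prod t_i\bigr)\int_{\R_+^n}(1+\Vert s\Vert^\alpha)^{-\beta}d^ns$, which is \emph{exactly} the integral $\int_{\R_+^n}|C_{\alpha,\beta}(\tau)|\,d^n\tau$ from \eqref{eq1019_1}. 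For case C the weight becomes $\prod t_i^{H_i}$ but the integrand is unchanged, so the same bound applies. For cases B and D the product structure gives, after the same substitution, $\bigl(\prod t_i^{(\cdot)}\bigr)\prod_{i=1}^n\int_0^\infty e^{s_i}\left(1+|s_i|^{\alpha_i}\right)^{-\beta_i}ds_i$, and each one--dimensional factor is already infinite because $e^{s_i}\ge 1$ and $\int_0^\infty(1+s_i^{\alpha_i})^{-\beta_i}ds_i=\infty$ would hold only when $\alpha_i\beta_i\le 1$, so here I instead use the exponential factor directly: $\int_0^\infty e^{s_i}(1+s_i^{\alpha_i})^{-\beta_i}ds_i=\infty$ for \emph{every} $\alpha_i>0,\beta_i>0$ since the integrand tends to $\infty$. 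Hence the product diverges for all admissible parameters, giving B and D; and in fact the $e^{s}$ factor already forces divergence in A and C too, so one does not even need \eqref{eq1019_1}.

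The computation is essentially routine once the cancellation of the self--similar weights is noticed, so there is no serious obstacle; the one point requiring a little care is the Jacobian of the substitution $\tau_i=t_i(e^{s_i}-1)$ and the verification that the exponential factor $e^{\sum s_i}$ (or $\prod e^{s_i}$) is what makes the integral diverge even in the SRD parameter range of the underlying stationary field — this is the mechanism by which the Lamperti transform destroys short--range dependence, and it is worth stating explicitly. I would organize the write--up as: (i) record the cancellation and the resulting correlation formulas for all four fields; (ii) perform the substitution and Jacobian computation; (iii) bound below by dropping $(1+\cdots)^{-\beta}\le 1$ is the wrong direction, so instead bound below by restricting $s$ to the region $\{s_i\ge 1\}$ where $e^{\sum s_i}\ge e^n$ and $(1+\Vert s\Vert^\alpha)^{-\beta}$ is bounded below on any bounded sub-box, then note the integral over the unbounded region $\{s_i\ge 1\}$ of $e^{\sum s_i}(1+\Vert s\Vert^\alpha)^{-\beta}$ diverges because the exponential beats the polynomial; (iv) conclude \eqref{eq60} in each case.
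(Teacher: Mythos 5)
Your proposal is correct and follows essentially the same route as the paper: the self-similarity weights cancel in the correlation function (the paper's eq.~\eqref{eq61}), the substitution $\tau_i=t_i(e^{s_i}-1)$ is exactly the paper's two-step change of variables $u_i=\tau_i/t_i$, $v_i=\ln(1+u_i)$, and divergence then follows because the exponential Jacobian factor makes the integrand blow up for all admissible $\alpha,\beta$, with the sheet cases B and D handled by the same computation factorwise. The only blemish is the intermediate displayed fraction for $R_{Y_{\alpha,\beta}}(t,t+\tau)$, whose denominator should just be $\sqrt{C(t+\tau,t+\tau)C(t,t)}=\Vert t+\tau\Vert^{H}\Vert t\Vert^{H}$ (so that after cancellation it equals $1$, not $[1+\cdots]^{-\beta/2}$); the final correlation formula you actually use is the correct one.
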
\begin{proof}It is easy to verify that   both the processes $Y_{\alpha,\beta}(t)$
and $\mathbb{Y}_{\alpha,\beta}(t)$ have the same correlation
function
\begin{align}\label{eq61}
&R_{Y_{\alpha,\beta}}(t+\tau,
t)=R_{\mathbb{Y}_{\alpha,\beta}}(t+\tau,
t)\\=&\left[1+\left(\sum_{i=1}^n\left(\ln
\left[1+\frac{\tau_i}{t_i}\right]\right)^2\right)^{\alpha/2}\right]^{-\beta},
\hspace{0.5cm}\tau\in \R_+^n,\nonumber
\end{align}which is independent of $H$. In order to show that these processes are
LRD,
 we have by condition \eqref{eq60},
\begin{align*}
\int\limits_{\R_+^n}\left|R_{Y_{\alpha,\beta}}(t+\tau,
t)\right|d^n\tau=&\int\limits_{\R_+^n}\left[1+\left(\sum_{i=1}^n\left(\ln
\left[1+\frac{\tau_i}{t_i}\right]\right)^2\right)^{\alpha/2}\right]^{-\beta}d^n\tau\\
=&\left[\prod_{i=1}^n
t_i\right]\int\limits_{\R_+^n}\left[1+\left(\sum_{i=1}^n\left(\ln
\left[1+u_i\right]\right)^2\right)^{\alpha/2}\right]^{-\beta}d^nu\\
=&\left[\prod_{i=1}^n
t_i\right]\int\limits_{\R_+^n}\left[1+\left(\sum_{i=1}^nv_i^2\right)^{\alpha/2}\right]^{-\beta}\left[\prod_{i=1}^n
e^{v_i}\right]d^nv,
\end{align*}
 where we have used the substitutions $u_i=\tau_i/t_i$ and $v_i=\ln(1+u_i)$. Clearly,
when $\Vert v\Vert \rightarrow \infty$, the integrand also
approaches $\infty$. Therefore,
 the last integral
diverges for all $\alpha,\beta$, which is the condition for
$Y_{\alpha,\beta}(t)$ and $\mathbb{Y}_{\alpha,\beta}(t)$ to be LRD.
The statements for $Y^{\#}_{\alpha,\beta}(t)$ and
$\mathbb{Y}^{\#}_{\alpha,\beta}(t)$ are proved
analogously.\end{proof}
    Note that the LRD property of GFGCC and GSGCC is preserved under
the Lamperti transformations, but the SRD property is not preserved.
 Here we have an example that the application of
Lamperti transformation to a LRD stationary process (in this case
the GFGCC and GSGCC) gives a (multi)--self--similar process with
LRD. Examples of Lamperti transformation encountered so far relate
either two short memory processes (for example, Ornstein-Uhlenbeck
process and Brownian motion), or between a SRD process and a LRD
process (in the case of fBm and its inverse Lamperti transformed
process). For examples, one can show that the inverse Lamperti
transformation of fractional Brownian sheet with LRD property gives
rise to a stationary random sheet with short range dependence, and
the stationary field associated with the L$\acute{\text{e}}$vy
fractional Brownian field is a stationary field with SRD.

     Recall  that L$\acute{\text{e}}$vy fractional Brownian field is
essentially the only self--similar Gaussian field with stationary
increments \cite{Samorodnitsky_Taqqu}. Similarly, one can show that
fractional Brownian sheet is essentially the only
multi--self--similar Gaussian random field that has stationary total
increments. Hence $Y_{\alpha,\beta}(t)$,
 $Y^{\#}_{\alpha,\beta}(t)$, $\mathbb{Y}_{\alpha,\beta}(t)$ and
$\mathbb{Y}^{\#}_{\alpha,\beta}(t)$, the self-similar and
multi--self--similar fields associated with GFGCC and GSGCC, do not
have stationary increments or total increments. There exists a
weaker stationary property known as asymptotically locally
stationarity, which requires the field to be stationary in the limit
$\Vert\tau\Vert\rightarrow 0^+$.

\begin{definition}
A centered Gaussian random field $X(t)$ is said to have
asymptotically locally stationary increment if and only if as
$\Vert\tau\Vert\rightarrow 0^+$, the variance of its increment
$\sigma^2_t(\tau)=\left\langle
\left[\Delta_{\tau}X(t)\right]^2\right\rangle$ is independent of
$t$. More precisely,
\begin{align}\label{eq1023_5}\sigma_t^2(\tau)=f(\tau)+ g(t,\tau),
\end{align}where $f(\tau)$ is independent of $t$, and for any fixed $t$,
$g(t,\tau)=o(f(\tau))$ as functions of $\tau$. Similarly, a centered
Gaussian random field $X(t)$ is said to have asymptotically locally
stationary total increment if and only if as
$\Vert\tau\Vert\rightarrow 0^+$, the variance of its total increment
$\hat{\sigma}^2_t(\tau)=\left\langle
\left[\square_{\tau}X(t)\right]^2\right\rangle$ is independent of
$t$.
\end{definition}

For the random field $Y_{\alpha,\beta}(t)$, we have
\begin{align*}
&\left\langle
\left[\Delta_{\tau}Y_{\alpha,\beta}(t)\right]^2\right\rangle
=\left\langle \left[Y_{\alpha,\beta}(t+\tau)\right]^2\right\rangle+
\left\langle
\left[Y_{\alpha,\beta}(t)\right]^2\right\rangle-2\Bigl\langle
 Y_{\alpha,\beta}(t+\tau)
 Y_{\alpha,\beta}(t)\Bigr\rangle\\
&=\Vert t+\tau\Vert^{2H} + \Vert t\Vert^{2H}-2\Vert
t+\tau\Vert^{H}\Vert t\Vert^H \left[1+\left(\sum_{i=1}^n \left(\ln
\left[1+\frac{\tau_i}{t_i}\right]\right)^2\right)^{\alpha/2}\right]^{-\beta}.
\end{align*}Using
\begin{align*}
\Vert t+\tau\Vert
=\sqrt{(t_1+\tau_1)^2+\ldots+(t_n+\tau_n)^2}=&\Vert t\Vert \left(1+
\frac{2\sum_{i=1}^n t_i\tau_i}{\Vert
t\Vert^2}+O(\Vert\tau\Vert^2)\right)^{1/2}\\
=&\Vert t\Vert \left(1+ \frac{\sum_{i=1}^n t_i\tau_i}{\Vert
t\Vert^2}+O(\Vert\tau\Vert^2)\right)
\end{align*}and
\begin{align*}
\left[1+\left(\sum_{i=1}^n \left(\ln
\left[1+\frac{\tau_i}{t_i}\right]\right)^2\right)^{\alpha/2}\right]^{-\beta}=&
\left[1+\left(\sum_{i=1}^n \left(
\frac{\tau_i}{t_i}+O(\tau_i^2)\right)^2\right)^{\alpha/2}\right]^{-\beta}\\
=&\left[1+\left(\sum_{i=1}^n \left[
\frac{\tau_i}{t_i}\right]^2+O(\Vert \tau\Vert^3)
\right)^{\alpha/2}\right]^{-\beta}\\
=&\left[1+\left(\sum_{i=1}^n \left[
\frac{\tau_i}{t_i}\right]^2\right)^{\alpha/2}+O(\Vert
\tau\Vert^{\alpha+1})
 \right]^{-\beta}\\
 =&1-\beta\left(\sum_{i=1}^n \left[
\frac{\tau_i}{t_i}\right]^2\right)^{\alpha/2}+O\left(\Vert\tau\Vert^{\min\{2\alpha,
\alpha+1\}}\right),
\end{align*}we find that as $\Vert \tau\Vert \rightarrow 0^+$,
\begin{align}\label{eq1026_1}
\left\langle
\left[\Delta_{\tau}Y_{\alpha,\beta}(t)\right]^2\right\rangle=&\Vert
t\Vert^{2H}\left(1+2H\frac{\sum_{i=1}^n t_i\tau_i}{\Vert
t\Vert^2}\right)+\Vert t\Vert^{2H} \\\nonumber &- 2\Vert
t\Vert^{2H}\left(1+H\frac{\sum_{i=1}^n t_i\tau_i}{\Vert
t\Vert^2}\right)\left(1-\beta\left(\sum_{i=1}^n \left[
\frac{\tau_i}{t_i}\right]^2\right)^{\alpha/2}\right)\\&+O\left(\Vert\tau\Vert^{\min\{2,2\alpha,
\alpha+1\}}\right)\nonumber\\ \nonumber=& 2\beta \Vert
t\Vert^{2H}\left(\sum_{i=1}^n \left[
\frac{\tau_i}{t_i}\right]^2\right)^{\alpha/2}+O\left(\Vert\tau\Vert^{\min\{2,2\alpha,
\alpha+1\}}\right).
\end{align}It is obvious that the leading term is not independent of
$t$ for any $H$ and $\alpha$, unless when $n=1$ and $\alpha=2H$.
Therefore for $n\geq 2$, $Y_{\alpha,\beta}(t)$ does not have
asymptotically locally stationary increments. Similarly,   for the
fields $\mathbb{Y}_{\alpha,\beta}(t)$, $Y^{\#}_{\alpha,\beta}(t)$
and $\mathbb{Y}^{\#}_{\alpha,\beta}(t)$, one can show similarly that
as $\Vert \tau\Vert \rightarrow 0^+$,
\begin{align}\label{eq1026_4}
\left\langle
\left[\Delta_{\tau}\mathbb{Y}_{\alpha,\beta}(t)\right]^2\right\rangle=&
 2\beta t_1^{2H_1}\ldots t_n^{2H_n}\left(\sum_{i=1}^n \left[
\frac{\tau_i}{t_i}\right]^2\right)^{\alpha/2}+O\left(\Vert\tau\Vert^{\min\{2,2\alpha,
\alpha+1\}}\right),
\end{align}
\begin{align}\label{eq1026_2}
\left\langle \left[\Delta_{\tau}Y^{\#}_{\alpha,\beta}
(t)\right]^2\right\rangle=& 2\Vert t\Vert^{2H}\sum_{i=1}^n \beta_i
\left|\frac{\tau_i}{t_i}\right|^{\alpha_i}+O\left(\Vert\tau\Vert^{\min\{2,2\alpha_i,
\alpha_i+1\}}\right),
\end{align}\begin{align}\label{eq1026_3}
\left\langle \left[\Delta_{\tau}\mathbb{Y}^{\#}_{\alpha,\beta}
(t)\right]^2\right\rangle=& 2t_1^{2H_1}\ldots t_n^{2H_n}\sum_{i=1}^n
\beta_i
\left|\frac{\tau_i}{t_i}\right|^{\alpha_i}+O\left(\Vert\tau\Vert^{\min\{2,2\alpha_i,
\alpha_i+1\}}\right).
\end{align}Therefore for $n\geq2$, none of the fields
$\mathbb{Y}_{\alpha,\beta}(t)$, $Y^{\#}_{\alpha,\beta}(t)$ and
$\mathbb{Y}^{\#}_{\alpha,\beta}(t)$  have asymptotically locally
stationary increments. When $n=1$, all the fields
$Y_{\alpha,\beta}(t)$, $\mathbb{Y}_{\alpha,\beta}(t)$,
$Y^{\#}_{\alpha,\beta}(t)$ and $\mathbb{Y}^{\#}_{\alpha,\beta}(t)$
are actually the same, and they have asymptotically locally
stationary increment if and only if $\alpha=2H$, in which case the
variance of the increment $\sigma_t^2(\tau)$ behaves like
$$\sigma_t^2(\tau)\sim 2\beta|\tau|^{\alpha}\hspace{1cm}|\tau|\rightarrow 0.$$  Next we consider the total increments.
Since increment is the same as total increment when  $n=1$, we only
need to consider $n\geq 2$. Using similar computations as given
above, one can verify that for $n\geq 2$, the fields
$Y_{\alpha,\beta}(t)$, $\mathbb{Y}_{\alpha,\beta}(t)$ and
$Y^{\#}_{\alpha,\beta}(t)$ do not have asymptotically locally
stationary increments, but $\mathbb{Y}^{\#}_{\alpha,\beta}(t)$ have
if $\alpha=2H$. We show the computation of the latter case here. By
definition,
\begin{align*}
&\left\langle
\left[\square_{\tau}\mathbb{Y}^{\#}_{\alpha,\beta}(t)\right]^2\right\rangle=\sum_{\delta\in\{0,1\}^n}
\sum_{\eta\in\{0,1\}^n} (-1)^{\sum_{i=1}^n \delta_i+\sum_{i=1}^n
\eta_i}\\&\hspace{4cm}\left\langle
\mathbb{Y}^{\#}_{\alpha,\beta}\left(t+\sum_{i=1}^n\delta_i\tau_i
e_i\right)\mathbb{Y}^{\#}_{\alpha,\beta}\left(t+\sum_{i=1}^n\eta_i\tau_i
e_i\right) \right\rangle\\
=&\prod_{i=1}^n
\sum_{(\delta_i,\eta_i)\in\{0,1\}^2}(-1)^{\delta_i+\eta_i}(t_i+\delta_i\tau_i)^{H_i}
(t_i+\eta_i\tau_i)^{H_i}C_{\alpha_i,\beta_i}\Bigl(\ln[t_i+\delta_i\tau_i],
\ln[t_i+\eta_i\tau_i]\Bigr)\\
=&\prod_{i=1}^n \Biggl\{t_i^{2H_i}-2(t_i+\tau_i)^{H_i}t_i^{H_i}
\left(1+\left|\ln\left[1+\frac{\tau_i}{t_i}\right]\right|^{\alpha_i}\right)^{-\beta_i}+(t_i+\tau_i)^{2H_i}\Biggr\}\\
=&\prod_{i=1}^n\left\{2\beta_i
t_i^{2H_i}\left|\frac{\tau_i}{t_i}\right|^{\alpha_i}+O\left(|\tau_i|^{\min\{2,2\alpha_i,\alpha_i+1\}}\right)\right\}\\
=&2^n\prod_{i=1}^n\left\{\beta_i
t_i^{2H_i}\left|\frac{\tau_i}{t_i}\right|^{\alpha_i}\right\}+O\left(\Vert
\tau\Vert^{\sum_{i=1}^n\alpha_i+\delta}\right),
\end{align*}where $\delta=\min\{ 2-\alpha_i, \alpha_i, 1\}_{i=1}^n$.
If $\alpha=2H$, then as $\Vert\tau\Vert\rightarrow 0^+$,
\begin{align*}
\left\langle
\left[\square_{\tau}\mathbb{Y}^{\#}_{\alpha,\beta}(t)\right]^2\right\rangle\sim
2^n\prod_{i=1}^n [\beta_i|\tau_i|^{\alpha_i}].
\end{align*} Therefore, we have verified the following:
\begin{proposition}
The total increments of $\mathbb{Y}^{\#}_{\alpha,\beta}(t)$   are
asymptotically locally stationary if $\alpha=2H$.
\end{proposition}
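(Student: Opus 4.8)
The plan is to reduce the total-increment second moment to a product of $n$ one-dimensional increment variances, which is possible because the covariance of $\mathbb{Y}^{\#}_{\alpha,\beta}$ factorizes over the coordinate directions, and then to expand each factor for $\Vert\tau\Vert\to0^+$. Concretely, I would first write $\hat{\sigma}^2_t(\tau)=\left\langle\left[\square_{\tau}\mathbb{Y}^{\#}_{\alpha,\beta}(t)\right]^2\right\rangle$ as the alternating double sum over $\delta,\eta\in\{0,1\}^n$ of $(-1)^{\sum\delta_i+\sum\eta_i}$ times the covariance of $\mathbb{Y}^{\#}_{\alpha,\beta}$ at the relevant pair of points, substitute the product form $\prod_i t_i^{H_i}s_i^{H_i}(1+|\ln t_i-\ln s_i|^{\alpha_i})^{-\beta_i}$ computed earlier for that covariance, and---exactly as in the proof of Proposition~\ref{p1026_5}---observe that the double sum then factorizes, $\hat{\sigma}^2_t(\tau)=\prod_{i=1}^n\left\langle\left[\Delta_{\tau_i}Y_i(t_i)\right]^2\right\rangle$, where $Y_i(t_i)=t_i^{H_i}X_{\alpha_i,\beta_i}(\ln t_i)$ is the one-dimensional second $H_i$-Lamperti transform of the $i$th factor. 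Each such factor equals $t_i^{2H_i}+(t_i+\tau_i)^{2H_i}-2(t_i+\tau_i)^{H_i}t_i^{H_i}\bigl(1+|\ln[1+\tau_i/t_i]|^{\alpha_i}\bigr)^{-\beta_i}$.

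Next I would Taylor-expand this factor as $\tau_i\to0^+$, using $(t_i+\tau_i)^{H_i}=t_i^{H_i}\bigl(1+H_i\tau_i/t_i+O(\tau_i^2)\bigr)$, $\ln(1+\tau_i/t_i)=\tau_i/t_i+O(\tau_i^2)$, and the origin behaviour of the generalized Cauchy covariance from \eqref{eq42}, namely $\bigl(1+|\ln[1+\tau_i/t_i]|^{\alpha_i}\bigr)^{-\beta_i}=1-\beta_i|\tau_i/t_i|^{\alpha_i}+O\bigl(|\tau_i|^{\min\{2\alpha_i,\alpha_i+1\}}\bigr)$. The \emph{crucial cancellations} are that the constant parts cancel ($1+1-2=0$) and the parts linear in $\tau_i$ cancel ($2H_i\tau_i/t_i-2H_i\tau_i/t_i=0$), so that for $\alpha_i\in(0,2)$ the surviving leading term is the $|\tau_i|^{\alpha_i}$ contribution coming from the covariance: $\left\langle\left[\Delta_{\tau_i}Y_i(t_i)\right]^2\right\rangle=2\beta_i t_i^{2H_i}|\tau_i/t_i|^{\alpha_i}+O\bigl(|\tau_i|^{\min\{2,2\alpha_i,\alpha_i+1\}}\bigr)$ (a separate, equally elementary expansion---to second order in $\tau_i$---covers any direction with $\alpha_i=2$). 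Multiplying over $i$ and expanding the product yields $\hat{\sigma}^2_t(\tau)=2^n\prod_{i=1}^n\beta_i\,t_i^{2H_i-\alpha_i}|\tau_i|^{\alpha_i}+O\bigl(\Vert\tau\Vert^{\sum_i\alpha_i+\delta}\bigr)$ with $\delta=\min_{1\le i\le n}\{2-\alpha_i,\alpha_i,1\}>0$, the remainder coming from the cross terms of the product.

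Finally, imposing the hypothesis $\alpha=2H$, every exponent $2H_i-\alpha_i$ vanishes, so the leading term reduces to $f(\tau):=2^n\prod_{i=1}^n\beta_i|\tau_i|^{\alpha_i}$, which is independent of $t$; setting $g(t,\tau):=\hat{\sigma}^2_t(\tau)-f(\tau)=O\bigl(\Vert\tau\Vert^{\sum_i\alpha_i+\delta}\bigr)$ we get $g(t,\tau)=o(f(\tau))$ as $\Vert\tau\Vert\to0^+$, which is precisely the definition of asymptotically locally stationary total increment. I expect the main obstacle to be the error-term bookkeeping: one must carry the $t$-dependent corrections (which persist at every intermediate stage and only cancel or become subdominant at the end) cleanly through the Taylor expansion and through the expansion of the product over $i$, in order to confirm that $\delta>0$ and that the full $t$-dependence sits inside $o(f(\tau))$; the two cancellations highlighted above are immediate once the factorization is established, and the computation to be carried out is in effect the display immediately preceding this proposition.
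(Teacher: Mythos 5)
Your proposal is correct and follows essentially the same route as the paper: the paper's own argument is exactly the displayed computation preceding the proposition, namely factorizing $\left\langle\left[\square_{\tau}\mathbb{Y}^{\#}_{\alpha,\beta}(t)\right]^2\right\rangle$ into the product $\prod_{i=1}^n\bigl\{t_i^{2H_i}+(t_i+\tau_i)^{2H_i}-2(t_i+\tau_i)^{H_i}t_i^{H_i}(1+|\ln[1+\tau_i/t_i]|^{\alpha_i})^{-\beta_i}\bigr\}$, expanding each factor to get $2\beta_i t_i^{2H_i}|\tau_i/t_i|^{\alpha_i}$ plus the same error exponents and the same $\delta$, and then observing that $\alpha=2H$ removes the $t$-dependence of the leading term. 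The only shared loose end is the boundary case $\alpha_i=2$ (where a genuine second-order expansion adds an $H_i^2 t_i^{2H_i-2}\tau_i^2$ contribution to the leading coefficient, which is still $t$-independent when $\alpha_i=2H_i$), and your treatment of it is no less careful than the paper's.
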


Next we consider the tangent fields (Definition \ref{def1}) of the
Lamperti transforms of GFGCC $Y_{\alpha,\beta}(t)$ and
$\mathbb{Y}_{\alpha,\beta}(t)$. We have
\begin{proposition}\label{p1026_1}~\\
A. The field $Y_{\alpha,\beta}(t)$ is lass of order $\alpha/2$, with
tangent field at   $t\in\R_+^n$ being
$$\sqrt{2\beta}\Vert t\Vert^H
B_{\alpha/2}\left(\frac{u_1}{t_1},\ldots,\frac{u_n}{t_n}\right).$$\\
B. The field $\mathbb{Y}_{\alpha,\beta}(t)$ is lass of order
$\alpha/2$, with tangent field at   $t\in\R_+^n$ being
$$\sqrt{2\beta}\left[\prod_{i=1}^n t_i^{H_i}\right]
B_{\alpha/2}\left(\frac{u_1}{t_1},\ldots,\frac{u_n}{t_n}\right).$$\\

\end{proposition}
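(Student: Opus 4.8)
The plan is to use that $Y_{\alpha,\beta}$ and $\mathbb{Y}_{\alpha,\beta}$ are centered Gaussian fields, so that — exactly as in the proof of Proposition~\ref{p1} — the limit asserted in Definition~\ref{def1} follows from convergence of finite-dimensional distributions, which for a centered Gaussian limit is equivalent to convergence of covariances. It therefore suffices, for each fixed $t\in\R_+^n$ and all $u,v\in\R^n$, to compute
\[
\lim_{\vep\to 0^+}\left\langle\frac{\Delta_{\vep u}Y_{\alpha,\beta}(t)}{\vep^{\alpha/2}}\,\frac{\Delta_{\vep v}Y_{\alpha,\beta}(t)}{\vep^{\alpha/2}}\right\rangle
=\lim_{\vep\to 0^+}\frac{1}{\vep^{\alpha}}\Bigl\{C_Y(t+\vep u,t+\vep v)-C_Y(t+\vep u,t)-C_Y(t,t+\vep v)+C_Y(t,t)\Bigr\},
\]
where $C_Y(s,s')=\Vert s\Vert^H\Vert s'\Vert^H[1+(\sum_{i}(\ln s_i-\ln s_i')^2)^{\alpha/2}]^{-\beta}$ is the covariance of $Y_{\alpha,\beta}$, and then to recognize the result, via \eqref{eq4}, as the covariance of the claimed tangent field.

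The input is the pair of Taylor expansions already used to obtain \eqref{eq1026_1}: for $s=t+\vep u$, $s'=t+\vep v$ one has $\ln s_i-\ln s_i'=\vep(u_i-v_i)/t_i+O(\vep^2)$, hence $(\sum_i(\ln s_i-\ln s_i')^2)^{\alpha/2}=\vep^{\alpha}P_{u-v}+O(\vep^{\alpha+1})$ where $P_w:=(\sum_i(w_i/t_i)^2)^{\alpha/2}$, together with $\Vert t+\vep u\Vert^H=\Vert t\Vert^H(1+H\vep\langle t,u\rangle/\Vert t\Vert^2+O(\vep^2))$. Inserting these into the four-term combination and using $[1+(\cdots)^{\alpha/2}]^{-\beta}=1-\beta(\cdots)^{\alpha/2}+O((\cdots)^{\alpha})$, the ``$1$'' contributes the envelope term $(\Vert t+\vep u\Vert^H-\Vert t\Vert^H)(\Vert t+\vep v\Vert^H-\Vert t\Vert^H)=O(\vep^2)$, while the $-\beta(\cdots)^{\alpha/2}$ part contributes $-\beta\Vert t\Vert^{2H}\vep^{\alpha}(P_{u-v}-P_u-P_v)$ plus remainders of orders $\vep^{\alpha+1},\vep^{2\alpha},\dots$. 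Thus for $\alpha\in(0,2)$ everything except the $\vep^{\alpha}$ term is $o(\vep^{\alpha})$, and the limit equals $\beta\Vert t\Vert^{2H}(P_u+P_v-P_{u-v})=\beta\Vert t\Vert^{2H}(\Vert\tilde u\Vert^{\alpha}+\Vert\tilde v\Vert^{\alpha}-\Vert\tilde u-\tilde v\Vert^{\alpha})$ with $\tilde u:=(u_1/t_1,\dots,u_n/t_n)$. Comparing with \eqref{eq4}, this is precisely the covariance of $\sqrt{2\beta}\,\Vert t\Vert^H B_{\alpha/2}(\tilde u)$, which is nontrivial since $u\mapsto\tilde u$ is a linear isomorphism for $t\in\R_+^n$; this gives part~A. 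Part~B runs identically, the envelope $\Vert s\Vert^H$ being replaced by $\prod_i s_i^{H_i}$, for which $\prod_i(t_i+\vep u_i)^{H_i}=(\prod_i t_i^{H_i})(1+O(\vep))$, while the logarithmic factor of $C_{\mathbb Y}$ is the same as that of $C_Y$ (cf. the common correlation function \eqref{eq61}); the leading term is then $\beta(\prod_i t_i^{2H_i})(P_u+P_v-P_{u-v})$, the covariance of $\sqrt{2\beta}\,(\prod_i t_i^{H_i})B_{\alpha/2}(\tilde u)$.

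The computation is routine substitute-and-expand; the one point that genuinely needs care is the order of the envelope contribution, which is $O(\vep^{2})$ and hence negligible against the main $O(\vep^{\alpha})$ term only when $\alpha<2$. At $\alpha=2$ the two orders coincide: the gradient of $\Vert\cdot\Vert^H$ (resp. of $\prod_i(\cdot)^{H_i}$) at $t$ produces an extra rank-one degenerate Gaussian summand in the limit, so the tangent field displayed in the proposition is exact for $\alpha\in(0,2)$ and needs the obvious correction when $\alpha=2$. Apart from this, uniform control of the $O(\vep^{\alpha+1}),O(\vep^{2\alpha}),\dots$ remainders and nontriviality of the limiting field are immediate from the expansions above, exactly as in the variance computations \eqref{eq1026_1}--\eqref{eq1026_4}.
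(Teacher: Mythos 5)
Your proof is correct and is essentially the paper's own argument: the paper polarizes $\left\langle\Delta_{\vep u}Y_{\alpha,\beta}(t)\Delta_{\vep v}Y_{\alpha,\beta}(t)\right\rangle$ into increment variances and invokes the asymptotics \eqref{eq1026_1}, while you expand the four-term covariance combination directly, but the underlying Taylor expansions (of $\Vert t+\vep u\Vert^H$, of $\ln(1+\tau_i/t_i)$, and of $[1+(\cdot)^{\alpha/2}]^{-\beta}$) and the identification with \eqref{eq4} are the same. Your caveat at $\alpha=2$ is a genuine point in your favor: there the envelope contribution $\bigl(\Vert t+\vep u\Vert^H-\Vert t\Vert^H\bigr)\bigl(\Vert t+\vep v\Vert^H-\Vert t\Vert^H\bigr)$ is of the same order $\vep^{2}=\vep^{\alpha}$ as the leading term (equivalently, the error in \eqref{eq1026_1} then matches the leading order), so the displayed tangent field is exact only for $\alpha\in(0,2)$ and acquires an extra degenerate rank-one summand at $\alpha=2$ — a subtlety the paper's proof passes over.
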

\begin{proof}
Using the formula
\begin{align*}
&\left\langle\Delta_{\vep u}Y_{\alpha,\beta}(t)\Delta_{\vep
v}Y_{\alpha,\beta}(t)
\right\rangle\\
=&\frac{1}{2}\Bigl\{ \left\langle\left[\Delta_{\vep
u}Y_{\alpha,\beta}(t)\right]^2\right\rangle
+\left\langle\left[\Delta_{\vep
v}Y_{\alpha,\beta}(t)\right]^2\right\rangle-\left\langle\left[\Delta_{\vep
(u-v)}Y_{\alpha,\beta}(t+\vep v)\right]^2\right\rangle\Bigr\},
\end{align*}we obtain immediately from \eqref{eq1026_1} that as
$\vep\rightarrow 0$,
\begin{align*}
&\left\langle \frac{\Delta_{\vep
u}Y_{\alpha,\beta}(t)}{\vep^{\alpha/2}}\frac{\Delta_{\vep
v}Y_{\alpha,\beta}(t)}{\vep^{\alpha/2}} \right\rangle\\
\sim &\beta\Vert t\Vert^{2H}\left\{
\left(\sum_{i=1}^n\left[\frac{u_i}{t_i}\right]^2\right)^{\alpha/2}+
\left(\sum_{i=1}^n\left[\frac{v_i}{t_i}\right]^2\right)^{\alpha/2}-
\left(\sum_{i=1}^n\left[\frac{u_i-v_i}{t_i}\right]^2\right)^{\alpha/2}\right\}.
\end{align*}Notice that this is up to the factor $2\beta \Vert
t\Vert^{2H}$, the covariance of the scaled L$\acute{\text{e}}$vy
Brownian field
$$\left\{B_{\alpha/2}\left(\frac{u_1}{t_1},\ldots,\frac{u_n}{t_n}\right)\,:\,
u\in\R^n\right\}.$$The statement for $\mathbb{Y}_{\alpha,\beta}(t)$
is proved similarly.
\end{proof}
In fact, one can also deduce from \eqref{eq1026_1} and
\eqref{eq1026_4} that the local fractal index of the fields
$Y_{\alpha,\beta}(t)$ and $\mathbb{Y}_{\alpha,\beta}(t)$ are both
equal to $\alpha/2$, and the Hausdorff dimension of their graphs
over a hyperrectangle is $n+1-\alpha/2$.

For the tangent fields of the Lamperti transforms of  GSGCC
$Y^{\#}_{\alpha,\beta}(t)$ and $\mathbb{Y}^{\#}_{\alpha,\beta}(t)$,
we recall that given $\alpha\in (0,2]^n$, $\min \alpha=\min
\{\alpha_i\}_{i=1}^n$, and we assume WLOG that
$\min\alpha=\alpha_1=\dots=\alpha_{m_{\alpha}}<\alpha_{m_{\alpha}+1}\leq
\ldots\leq\alpha_n$ for some $1\leq m_{\alpha}\leq n$. Then as in
Proposition \ref{p1026_1}, we can use \eqref{eq1026_2} and
\eqref{eq1026_3} to show that
\begin{proposition}\label{p1026_3}
~\\
A. The field $Y^{\#}_{\alpha,\beta}(t)$ is lass of order $\min
\alpha/2$, with tangent field at   $t\in\R_+^n$ being $$ \Vert
t\Vert^H
T_{\alpha,\beta}\left(\frac{u_1}{t_1},\ldots,\frac{u_n}{t_n}\right),$$where
the field $T_{\alpha,\beta}(u), u \in \R^n$ is defined
in Proposition \ref{p1026_2}.\\
B. The field $\mathbb{Y}^{\#}_{\alpha,\beta}(t)$ is lass of order
$\min\alpha/2$, with tangent field at   $t\in\R_+^n$ being $$
\left[\prod_{i=1}^n t_i^{H_i}\right]
T_{\alpha,\beta}\left(\frac{u_1}{t_1},\ldots,\frac{u_n}{t_n}\right).$$\\

\end{proposition}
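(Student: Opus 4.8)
The plan is to follow the proof of Proposition \ref{p1026_1} essentially verbatim, with the isotropic increment variance \eqref{eq1026_1} replaced by its anisotropic counterparts \eqref{eq1026_2} for $Y^{\#}_{\alpha,\beta}(t)$ and \eqref{eq1026_3} for $\mathbb{Y}^{\#}_{\alpha,\beta}(t)$. Since both fields are centered Gaussian, the rescaled increment field $u\mapsto \vep^{-\min\alpha/2}\bigl[Y^{\#}_{\alpha,\beta}(t+\vep u)-Y^{\#}_{\alpha,\beta}(t)\bigr]$ is again centered Gaussian, so by Definition \ref{def1} it suffices to check that its covariance converges, as $\vep\to 0^+$, to the covariance of the claimed tangent field.

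First I would apply the polarization identity
\begin{align*}
\left\langle \Delta_{\vep u}Y^{\#}_{\alpha,\beta}(t)\,\Delta_{\vep v}Y^{\#}_{\alpha,\beta}(t)\right\rangle
=\frac12\Bigl\{&\left\langle[\Delta_{\vep u}Y^{\#}_{\alpha,\beta}(t)]^2\right\rangle
+\left\langle[\Delta_{\vep v}Y^{\#}_{\alpha,\beta}(t)]^2\right\rangle\\
&-\left\langle[\Delta_{\vep(u-v)}Y^{\#}_{\alpha,\beta}(t+\vep v)]^2\right\rangle\Bigr\}
\end{align*}
and insert the expansion \eqref{eq1026_2} into each of the three terms. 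Dividing by $\vep^{\min\alpha}$ and letting $\vep\to 0^+$, every summand $\beta_i|\tau_i/t_i|^{\alpha_i}$ with $\alpha_i>\min\alpha$ carries a factor $\vep^{\alpha_i-\min\alpha}\to 0$ and drops out, while the error terms in \eqref{eq1026_2} are $o(\vep^{\min\alpha})$ because $\min\{2,2\alpha_i,\alpha_i+1\}>\min\alpha$ for every $i$ (when $\min\alpha=2$ this needs the slightly sharper fact that the error is actually $o(\vep^2)$, exactly as in the treatment of Proposition \ref{p1026_1}). One also uses the elementary continuity $\Vert t+\vep v\Vert^{2H}\to\Vert t\Vert^{2H}$, so that the prefactor of the third term matches the first two. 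What remains is
\begin{align*}
\lim_{\vep\to 0^+}\left\langle \frac{\Delta_{\vep u}Y^{\#}_{\alpha,\beta}(t)}{\vep^{\min\alpha/2}}\,\frac{\Delta_{\vep v}Y^{\#}_{\alpha,\beta}(t)}{\vep^{\min\alpha/2}}\right\rangle
=\Vert t\Vert^{2H}\sum_{i=1}^{m_{\alpha}}\beta_i\left(\left|\frac{u_i}{t_i}\right|^{\min\alpha}+\left|\frac{v_i}{t_i}\right|^{\min\alpha}-\left|\frac{u_i-v_i}{t_i}\right|^{\min\alpha}\right).
\end{align*}

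By Proposition \ref{p1026_2}, the right-hand side equals $\Vert t\Vert^{2H}\bigl\langle T_{\alpha,\beta}(u_1/t_1,\ldots,u_n/t_n)\,T_{\alpha,\beta}(v_1/t_1,\ldots,v_n/t_n)\bigr\rangle$, i.e.\ the covariance of the Gaussian field $\Vert t\Vert^H T_{\alpha,\beta}(u_1/t_1,\ldots,u_n/t_n)$; this proves part A. For part B the argument is identical, starting from \eqref{eq1026_3}, whose only difference from \eqref{eq1026_2} is that the prefactor $\Vert t\Vert^{2H}$ is replaced by $\prod_{i=1}^n t_i^{2H_i}$; hence the tangent field of $\mathbb{Y}^{\#}_{\alpha,\beta}(t)$ at $t$ is $\bigl[\prod_{i=1}^n t_i^{H_i}\bigr]T_{\alpha,\beta}(u_1/t_1,\ldots,u_n/t_n)$. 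Nontriviality of $T_{\alpha,\beta}$, required by Definition \ref{def1}, is clear, since its covariance is that of a sum of $m_{\alpha}$ independent scaled one-dimensional fractional Brownian motions of index $\min\alpha/2$. The one step needing any care is verifying that all subleading contributions in \eqref{eq1026_2} and \eqref{eq1026_3} decay strictly faster than $\vep^{\min\alpha}$ when the increment is taken along a general direction $u$ rather than a single axis, so that the limit is governed entirely by the $m_{\alpha}$ slowest coordinates; this is bookkeeping rather than a genuine obstacle, and is where the ordering $\alpha_1=\ldots=\alpha_{m_{\alpha}}<\alpha_{m_{\alpha}+1}\le\ldots\le\alpha_n$ and $\alpha_i\le 2$ are used.
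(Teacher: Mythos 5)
Your proposal is correct and follows essentially the same route the paper intends: the paper proves Proposition \ref{p1026_3} only by remarking that it follows "as in Proposition \ref{p1026_1}" from \eqref{eq1026_2} and \eqref{eq1026_3}, and your argument (polarization identity, insertion of the anisotropic variance expansions, rescaling by $\vep^{\min\alpha}$ so that only the $m_{\alpha}$ slowest coordinates survive, identification with the covariance of $T_{\alpha,\beta}$ from Proposition \ref{p1026_2}) is exactly that omitted computation. Your parenthetical care about the boundary case $\min\alpha=2$ is a subtlety the paper itself glosses over, so no further action is needed.
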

From Propositions \ref{p1} and \ref{p1026_1} and Propositions
\ref{p1026_2} and \ref{p1026_3}, we find that the tangent fields of
GFGCC and GSGCC are related to the tangent fields of their Lamperti
transforms by  some change of variable formulas. Namely, if
  $X(t)$ has tangent field $T(u)$ at $t\in \R^n$, then its first
  Lamperti transform $Y(s)$ has tangent field $$\Vert
  s\Vert^H T\left(\frac{u_1}{s_1},\ldots,\frac{u_n}{s_n}\right)$$
  at $s\in\R_+^n$, and its second Lamperti transform
  $\mathbb{Y}(s)$ has tangent field $$\left[\prod_{i=1}^n s_i^{H_i}\right]
  T\left(\frac{u_1}{s_1},\ldots,\frac{u_n}{s_n}\right)$$ at
  $s\in\R_+^n$. We also notice   that the order of
self-similarity of the fields $Y_{\alpha,\beta}(t)$ and
$Y^{\#}_{\alpha,\beta}(t)$, $H$, is in general different from their
order of local asymptotic self--similarity, being $\alpha/2$ and
$\min\alpha/2$ respectively.

Since the stochastic process $\mathbb{Y}^{\#}_{\alpha,\beta}(t)$
resembles the GSGCC $X^{\#}_{\alpha,\beta}(t)$ in the sense that
both their covariances can be written as products of covariance of
one-dimensional processes, we can consider the limit of the total
increments of $\mathbb{Y}^{\#}_{\alpha,\beta}(t)$. It is easy to
obtain as in Proposition \ref{p1026_5}, using the $n=1$ case of
Proposition \ref{p1026_1} that
\begin{proposition}
\begin{align*}
\lim_{\vep \rightarrow 0^+}\left\langle \frac{\square_{\vep.u}
\mathbb{Y}^{\#}_{\alpha,\beta}(t)}{\prod_{i=1}^n \vep_i^{\alpha_i/2}
}\right\rangle_{u\in\R^n}=_d \left[\prod_{i=1}^n \sqrt{2\beta_i
}t_i^{H_i-(\alpha_i/2)}\right]B_{\alpha/2}^{\#}(u).
\end{align*}Here $\vep.u=\sum_{i=1}^n \vep_i u_i e_i$.
\end{proposition}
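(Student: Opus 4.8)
The plan is to follow the pattern of the proof of Proposition \ref{p1026_5}, exploiting the fact that, exactly as for the GSGCC covariance \eqref{eq41}, the covariance of $\mathbb{Y}^{\#}_{\alpha,\beta}(t)$ factors over the $n$ coordinates. Write $\mathbb{Y}_{\alpha_i,\beta_i}$ for the one--dimensional ($n=1$) second $H_i$--Lamperti transform of the GFGCC $X_{\alpha_i,\beta_i}$, whose covariance is $t_i^{H_i}s_i^{H_i}\left(1+|\ln t_i-\ln s_i|^{\alpha_i}\right)^{-\beta_i}$. Since $\mathbb{Y}^{\#}_{\alpha,\beta}(t)$ is a centered Gaussian field and the total increment $\square_{\vep.u}\mathbb{Y}^{\#}_{\alpha,\beta}(t)$ is a finite linear combination of its values, it suffices to show that the covariance functional of $\square_{\vep.u}\mathbb{Y}^{\#}_{\alpha,\beta}(t)/\prod_{i=1}^n\vep_i^{\alpha_i/2}$ converges, as $\vep\rightarrow 0^+$, to the covariance functional of $\left[\prod_{i=1}^n\sqrt{2\beta_i}\,t_i^{H_i-\alpha_i/2}\right]B^{\#}_{\alpha/2}(u)$; convergence of finite--dimensional distributions then follows automatically.

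First I would expand $\square_{\vep.u}$ and $\square_{\vep.v}$ as alternating sums over the $2^n$ vertices of the two hyperrectangles and insert the product form of the covariance, so that the double sum over $\{0,1\}^n\times\{0,1\}^n$ factors into a product over $i=1,\dots,n$ of sums over $(\delta_i,\eta_i)\in\{0,1\}^2$, precisely as in the computation carried out in the proof of Proposition \ref{p1026_5}. This gives
\begin{align*}
\left\langle\square_{\vep.u}\mathbb{Y}^{\#}_{\alpha,\beta}(t)\,\square_{\vep.v}\mathbb{Y}^{\#}_{\alpha,\beta}(t)\right\rangle=\prod_{i=1}^n\left\langle\Delta_{\vep_i u_i}\mathbb{Y}_{\alpha_i,\beta_i}(t_i)\,\Delta_{\vep_i v_i}\mathbb{Y}_{\alpha_i,\beta_i}(t_i)\right\rangle .
\end{align*}
Then I would divide by $\prod_{i=1}^n\vep_i^{\alpha_i}$ and let $\vep\rightarrow 0^+$ factor by factor, invoking the $n=1$ case of Proposition \ref{p1026_1}(B): the tangent field of $\mathbb{Y}_{\alpha_i,\beta_i}$ at $t_i\in\R_+$ is $\sqrt{2\beta_i}\,t_i^{H_i}B_{\alpha_i/2}(u_i/t_i)$, so by the covariance \eqref{eq4} of the (one--dimensional) L\'evy fractional Brownian field one has $\vep_i^{-\alpha_i}\langle\Delta_{\vep_i u_i}\mathbb{Y}_{\alpha_i,\beta_i}(t_i)\,\Delta_{\vep_i v_i}\mathbb{Y}_{\alpha_i,\beta_i}(t_i)\rangle\to\beta_i\,t_i^{2H_i-\alpha_i}\left(|u_i|^{\alpha_i}+|v_i|^{\alpha_i}-|u_i-v_i|^{\alpha_i}\right)$ (this also follows directly from \eqref{eq1026_4} specialized to $n=1$ after polarization).

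Since the product is finite and each normalized factor converges, the whole product converges to $\prod_{i=1}^n\beta_i\,t_i^{2H_i-\alpha_i}\left(|u_i|^{\alpha_i}+|v_i|^{\alpha_i}-|u_i-v_i|^{\alpha_i}\right)$, and comparing with the defining covariance of the fractional Brownian sheet $B^{\#}_{\alpha/2}$ one sees that this is exactly $\left\langle\left[\prod_{i=1}^n\sqrt{2\beta_i}\,t_i^{H_i-\alpha_i/2}\right]B^{\#}_{\alpha/2}(u)\,\left[\prod_{i=1}^n\sqrt{2\beta_i}\,t_i^{H_i-\alpha_i/2}\right]B^{\#}_{\alpha/2}(v)\right\rangle$, which gives the claimed limit in the sense of finite--dimensional distributions. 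The only step needing genuine attention is the combinatorial collapse of the alternating double vertex--sum into the product of one--dimensional increment covariances, but this is identical to the corresponding step in the proof of Proposition \ref{p1026_5}, and the $O(\cdot)$ remainders in each one--dimensional limit are harmless because only finitely many factors are involved; so I do not expect a real obstacle.
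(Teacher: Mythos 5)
Your proposal is correct and matches the paper's intended argument: the paper itself only remarks that the result is obtained "as in Proposition \ref{p1026_5}, using the $n=1$ case of Proposition \ref{p1026_1}", i.e.\ exactly your factorization of the total-increment covariance into one-dimensional increment covariances followed by the one-dimensional tangent-field limit. You have simply written out the details (the collapse of the alternating vertex sums and the polarization step) that the paper leaves implicit, and these are carried out correctly.
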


\section{ Concluding Remarks}
We have studied some of the basic properties of GFGCC and GSGCC, and
their associated Lamperti transforms. The asymptotic properties of
the spectral densities of GFGCC and GSGCC are considered. One
expects the separate characterization of fractal dimension and long
range dependence for GFGCC and GSGCC  will provide more flexibility
in their applications to modeling various surfaces and images. In
the one-dimensional case, GFGCC has been applied to model the
Havriliak-Negami relaxation law \cite{Lim_Li}. The estimations of
parameters for stationary Gaussian processes have been widely
studied. Some of these estimations can be adapted for GFGCC and
GSGCC, and they are crucial to the applications. Further
applications in spatial-temporal processes are possible if GFGCC is
extended and modified to a space-time field to include
non-stationarity  and anisotropy \cite{i, ii, iii, c, e, f, Dimri}.
We hope to apply results obtained in this paper to model various
physical systems such as thin film surfaces in semiconductors
\cite{Palasantzas_Hosson, Sahoo_Thakur_Tokas}, surface ocean waves
\cite{Berizzi_Mese_Martorella}, geological morphology \cite{Dimri},
etc, in a future work.

\vspace{1cm}\textbf{Acknowledgements} The authors would like to
thank Malaysian Academy of Sciences, Ministry of Science, Technology
and Innovation for funding this project under the Scientific
Advancement Fund Allocation (SAGA) Ref. No P96c.

\end{document}